\documentclass[a4paper]{article}

\usepackage[utf8x]{inputenc}
\usepackage[T1]{fontenc}
\usepackage{amssymb}
\usepackage{graphicx}
\usepackage[english]{babel}
\usepackage{amsmath}
\usepackage[amsmath,thmmarks,standard]{ntheorem}
\usepackage{faktor}
\usepackage{xfrac}
\usepackage{stmaryrd}
\usepackage{fancybox}
\graphicspath{{images/}}
\usepackage{hyperref}
\usepackage{geometry}
\usepackage[dvipsnames]{xcolor}
\usepackage{multicol}
\usepackage{stackrel}
\usepackage{parskip}
\usepackage{fancyhdr}
\usepackage[mathscr]{euscript}
\usepackage{thmtools}
\usepackage{mathrsfs}
\usepackage{empheq}
\usepackage{scalerel}
\usepackage{indentfirst}
\usepackage{soul}
\usepackage{caption}
\usepackage{tabularx}
\usepackage{booktabs}
\usepackage[ all ]{xy}

\usepackage{enumitem}
\usepackage{wrapfig}
\DeclareUnicodeCharacter{0308}{HERE!HERE!}
\usepackage{float}
\usepackage{tikz} 
\usetikzlibrary{decorations.pathreplacing}
\usepackage{framed}
\usepackage{comment}
\usepackage{subfiles}
\usepackage{mathtools, nccmath}
\usepackage{listings}
\usepackage{import}
\usepackage{dsfont}
\usepackage[normalem]{ulem}
\usepackage{etoolbox}
\apptocmd{\sloppy}{\hbadness 10000\relax}{}{}

% Grandes intégrales
\usepackage{bigints}

\usepackage[affil-it]{authblk}

%----------------------------------- Taille de la police des section -----------------------------------%
% \usepackage{sectsty}
% \sectionfont{\Large}
% \subsectionfont{\large}
% \subsubsectionfont{\large}

%-----------------  Intervales d'entiers %-----------------

\usepackage{xparse} \DeclarePairedDelimiterX{\Iintv}[1]{\llbracket}{\rrbracket}{\iintvargs{#1}}
\NewDocumentCommand{\iintvargs}{>{\SplitArgument{1}{,}}m}
{\iintvargsaux#1} %
\NewDocumentCommand{\iintvargsaux}{mm} {#1\mkern1.5mu,\mkern1.5mu#2}

%--------------------------------------------- Couleurs -----------------------------------------------------%

%-------------------------------- FORMAT DES THEOREME ET DEFINITION --------------------------------%

\newtheorem{thm}{Theorem}[section]
\newtheorem{prop}[thm]{Proposition}
\newtheorem{lem}[thm]{Lemma}
\newtheorem{rem}[thm]{Remark}
% \newtheorem{def}[thm]{Definition}

% For sections and subsections

%------------------------------------   SHORT NOTATION -----------------------------------------------------%

\newcommand{\cg}{\mathcal{G}}
\newcommand{\cc}{\mathcal{C}}
\newcommand{\ce}{\mathcal{E}}
\newcommand{\cx}{\mathcal{X}}
\newcommand{\cb}{\mathcal{B}}
\newcommand{\crr}{\mathcal{R}}
\newcommand{\ci}{\mathcal{I}}

\newcommand{\hx}{\widehat{x}}

\newcommand{\ind}[1]{\mathbf{1}_{#1}}

\setlist[itemize]{itemsep=10pt, label={$\bullet$}}

\def\e{{\rm e}}
\def\P{\mathbb{P}}
\def\R{\mathbb{R}}
\def\Z{\mathbb{Z}}
\def\E{\mathbb{E}}

% Boites 
{}\def\v{\mathtt{v}}

\def\bkv{B_{k,\mathtt{v}}}
\def\bkmv{B_{k+1,\mathtt{v}}}
\def\bkvm{B_{k,2\mathtt{v}+e}}

\def\akv{A_{k,\mathtt{v}}}
\def\akmv{A_{k+1,\mathtt{v}}}
\def\akvm{A_{k,2\mathtt{v}+e}}

% Boite kp
\def\bkpi{B_{k_n, \sigma(i)}}
\def\bkpim{B_{k_n, \sigma(i+1)}}

\def\Xkpi{\widehat{\mathtt{x}}_{k_n, \sigma(i)}}

% Points (positions)
\def\xkv{\widehat{x}_{k,\mathtt{v}}}
\def\xkmv{\widehat{x}_{k+1,\mathtt{v}}}
\def\xkvm{\widehat{x}_{k,2\mathtt{v}+e}}

% Points 
\def\Xkv{\widehat{\mathtt{x}}_{k,\mathtt{v}}}
\def\Xkmv{\widehat{\mathtt{x}}_{k+1,\mathtt{v}}}
\def\Xkvm{\widehat{\mathtt{x}}_{k,2\mathtt{v}+e}}

% Poids 

% Processus ponctal de poisson

%  Tailles des boites

\def\hauteurkm{\e^{(k+1)L\frac{\alpha}{\gamma}}}
\def\hauteurk{\e^{kL\frac{\alpha}{\gamma}}}

\def\hdk{e^{-kdL}}

% Paramètre de v.a. de poisson

% Boites non vides

% Bornes des poids

% Nom des connecteurs et voisins

% Intensité Processus ponctual de poisson coloriage

%Star
\def\star{\mathrm{Star}(k,\mathtt{v})}

% Contantes 
\def\Cu{C_1}

\def\Cd{C_2}

\def\Ct{C_3}

\def\Dkp{\widehat{D}}
\def\Ckp{\widehat{C}}

\def\Dall{D}
\def\Call{C}

\def\Ctildeu{C^{(1)}}
\def\Ctilded{C^{(2)}}
\def\Ctilde{\tilde{C}}

%------------------------------- NON EXTINCTION -------------------------------%

\def\x{\mathtt{x}}

%------------------------------- Metastability gamma 2 -------------------------------%
% Boites

% \def\ttboite{\log n}
\def\tboite{\log n}

\def\npboite{(\log n)^{\A}}
\def\npboitecarre{(\log n)^{2\A}}

% PPPs
% \def\redpointsbi{\mathcal{X}^r_{B_i}}

% Composantes connexes et points couleurs

% \def\crbi{\mathcal{C}^r_{B_{i}}}
% \def\crbim{\mathcal{C}^r_{B_{i+1}}}

%  S et D

%Mn
%\def\mn{\lfloor \frac{n^{\frac{1}{d}}}{\tboite} \rfloor}

\def\mnd{\frac{n}{\npboite}}

% Exposants 

\def\A{A}

% Points
\def\v{\mathtt{v}}
\def\x{\mathtt{x}}

\def\y{\mathtt{y}}

% ------------------------- Citation des auteurs ----------------------------------%

%----------------------------------------------------------------------------------------%

\title{The contact process on Scale-Free Percolation}
\author[1]{Andrée Barnier}
\author[1]{Patrick Hoscheit}
\author[2]{Michele Salvi}
\author[1]{Elisabeta Vergu}
\affil[1]{\small INRAE, MaIAGE,Université Paris-Saclay, 78350 Jouy-en-Josas, France}
\affil[2]{\small Dipartimento di Matematica, Università di Roma Tor Vergata, Via della ricerca scientifica 1, 00133, Rome, Italy}
\date{}
% \address{Inrae Maiage}

\begin{document}

\maketitle

\begin{abstract}
    We consider the contact process on scale-free percolation, a spatial random graph model where the degree distribution of the vertices follows a power law with exponent $\beta$. We study the extinction time $\tau_{\mathcal G_n}$ of the contact process on the graph restricted to a $d$-dimensional box of volume $n$, starting from full occupancy. In the regime $\beta \in (2, 3)$, where the degrees have finite mean but infinite variance and the graph exhibits the ultra-small world behaviour, we adapt the techniques of \cite{linker_contact_2021} to show that $\tau_{\mathcal G_n}$ is exponential in $n$. Our main contribution, though, deals with the case $\beta > 3$, where the degrees have finite variance and the graph is small-world. We prove that also in this case $\tau_{\mathcal G_n}$ grows exponentially, at least up to a logarithmic correction reflecting the sparser graph structure. The proof requires the generalization of a result from \cite{mountford_exponential_2016} and combines a multi-scale analysis of the graph, the study of the chemical distance between vertices and percolation arguments.
\end{abstract}

\textit{Keywords: Random graph models, scale-free, contact process, extinction, metastability.}

%-----------------------------------------------------------------------------------%
%-----------------------------------------------------------------------------------%
        \section{Introduction}\label{sec:intro}
%-----------------------------------------------------------------------------------%
%-----------------------------------------------------------------------------------%

% \input{Sections/1-intro.tex}

\paragraph{The contact process.} 
First introduced in \cite{harris1974contact}, the contact process is a widely studied interacting particle system used to describe the spread of epidemics across a population. Each individual is represented by a node of a network and, at any instant of time, it can be either infected $(1)$ or susceptible $(0)$. More precisely, for $G = (V,E)$ a graph, the contact process $(\xi_{t})_{t \geq 0}$ on $G$ with infection rate $\lambda > 0$ is a continuous-time Markov process on $\{0, 1\}^{V}$ where an infected vertex infects each of its healthy neighbours with rate $\lambda$ and recovers at rate $1$. It is well known that there exists a critical contamination parameter $\lambda_c=\lambda_c(G)$ such that for $\lambda < \lambda_c$ the disease dies out almost surely, and for $\lambda > \lambda_c(G)$ the disease survives with a non-zero probability. On finite graphs, $\lambda_c=\infty$ almost surely, but a similar phase transition can be observed for the \emph{extinction time} $\tau_G =\inf \{ t>0,\ \xi_t=\emptyset \}$, starting from the initial state where all vertices are infected. As $\lambda$ increases, the contact process transitions from a regime of fast extinction ($\tau_G\sim \log|V|$) to a metastability regime, in which $\tau_G$ is at least exponential in the size of the graph. 

The contact process on $\mathbb{Z}^d$ has been extensively studied, with a comprehensive summary available in \cite{liggett1997domination}. In particular, $\lambda_c(\mathbb{Z}^d)>0$, indicating a non-trivial phase transition in the survival of the process. The same critical value also characterises the phase transition for the asymptotic behaviour of the extinction time on finite boxes within $\mathbb{Z}^d$. For $d$-regular trees $\mathbb{T}^d$, it was also shown that $\lambda_c({\mathbb{T}^d})>0$ \cite{pemantle_contact_1992,stacey1996existence}. Interestingly, in this case, there is a phase transition for the extinction time on finite boxes, but it occurs at $\lambda_2(\mathbb{T}^d) > \lambda_c({\mathbb{T}^d})$, where $\lambda_2$ is the critical value for \emph{strong} survival of the contact process \cite{stacey2001contact,cranston2014contact}. By leveraging results on $\mathbb{Z}^d$ and $\mathbb{T}^d$ and employing coarse-graining arguments, the result that $\lambda_c > 0$ was later extended to random regular graphs \cite{mourrat_phase_2016,lalley2017contact} and random graphs with bounded degree \cite{mountford_exponential_2016,chatterjee_contact_2009,mourrat_phase_2016}. 

\paragraph{Scale-free random graphs.} For graphs with unbounded degrees, the behaviour of the process can change: it is possible to have $\lambda_c=0$, so that the epidemic can persist with positive probability regardless of the infection rate. In particular, scale-free random graphs, whose node degree distribution follows a power law, contain highly connected vertices that enable the prolonged existence of the infection. Early studies by  \cite{pastor2001aepidemic,pastor2001bepidemic,pastor2002epidemic} conjectured that for scale-free random graphs with a degree distribution $p(k) \sim k^{-\beta}$, one has $\lambda_c = 0$ when $\beta \leq 3$ and $\lambda_c > 0$ when $\beta > 3$. These conjectures were subsequently invalidated, showing that $\lambda_c=0$ for all $\beta > 0$ on preferential attachment models \cite{berger2005spread} and on the configuration model \cite{chatterjee_contact_2009,mountford2013metastable}. Also, in the seminal work \cite{berger2005spread}, the authors established an explicit relationship between the survival of the epidemic and the presence of highly connected nodes. It was proved that if the epidemic reaches a node with large degree (greater than $\lambda^{-2}$), it survives for a long time. With this, they showed that the extinction time of the contact process on finite restrictions of size $n$ of the graph scales as $\exp(cn^{1/(\beta-1)})$ for some constant $c>0$. Subsequent work by \cite{chatterjee_contact_2009} and \cite{mountford_exponential_2016} analysed metastability on the configuration model and refined the understanding of extinction times for $\beta > 2$, showing that, for any $\lambda > 0$, there exists a constant $c>0$ such that $\mathbb{P} \left(\tau_{G_n} \geq \e^{cn}\right) \to 1$ as $n \to \infty$. 

For graphs with $\lambda_c=0$, it is interesting to look at the rate of decay of the probability of non-extinction when the infection rate $\lambda$ approaches 0. If $v$ is a vertex of the graph, let $\Gamma(\lambda)$ be this probability, starting from the initial state with only the vertex $v$ infected. In \cite{mountford2013metastable}, it was shown that, for a class of Galton-Watson trees corresponding to the local limit of the configuration model with power-law degree distributions with index $\beta>2$,
\begin{equation}\label{eq:non_extinction_mountford}
    \Gamma(\lambda) \asymp 
    \begin{cases}
        \lambda^{\frac{1}{3-\beta}}, & \text{if } \beta \in \left( 2, \frac{5}{2} \right], \\
        \frac{\lambda^{2\beta - 3}}{\log \left( 1/ \lambda \right)^{\beta - 2}}, & \text{if } \beta \in \left(\frac{5}{2}, 3 \right], \\
        \frac{\lambda^{2\beta - 3}}{\log \left( 1/ \lambda \right)^{2\beta - 4}}, & \text{if } \beta \in \left(3, \infty \right).
    \end{cases}
\end{equation}
For $\beta > 3$, \cite{can2015metastability} extended these analyses to preferential attachment models, showing a slightly higher probability of non-extinction due to shorter distances between highly connected nodes, facilitating faster epidemic spread compared to the configuration model. 

\paragraph{Spatial random graphs.} Recently, two papers have considered metastability and survival probabilities for spatial random graphs. For the hyperbolic random graph (HRG), an exponential survival result was proven for all $\lambda>0$ and all curvature parameters $\alpha_\text{HRG}\in (1/2,1)$ \cite[Theorem 1.2]{linker_contact_2021}:
\[
    \P(\tau_{G_n} \ge \e^{cn}) > 1-\exp(-cn^{\delta}),\quad n\ge 1
\]
for some $c>0$ and $\delta\in (0,1)$. The parameter space $\alpha_{HRG}\in (1/2,1)$ corresponds to scale-free networks with $\beta=2\alpha_\text{HRG}+1\in(2,3)$. A similar result was obtained in \cite{gracar_contact_2022} for a class of models characterized by their connection probabilities satisfying:
\[
    (1\wedge (W_x \wedge W_y)^{-\delta\gamma} \|x-y\|^{-\delta d}) \le \P(x\sim y) \le 
    (W_x\wedge W_y)^{-\delta\gamma} (W_x \vee W_y)^{-\delta(\gamma-1)} \|x-y\|^{-\delta d}
\]
for some $\delta>1$ and $\gamma\in(\delta/(\delta+1),1)$, which also yields scale-free networks with index $\beta=1+1/\gamma\in (2,3)$. However, the two models exhibit different behaviour for the non-extinction probability $\Gamma(\lambda)$: while both agree with \eqref{eq:non_extinction_mountford} for $\beta\in (5/2,3)$, the phase transition at $\beta=5/2$ and the strict polynomial regime $\Gamma(\lambda)\sim\lambda^{1/(3-\beta)}$ for $\beta \in (2,5/2)$ only appears in the HRG model. For the models in \cite{gracar_contact_2022}, we actually have the logarithmic correction $\Gamma(\lambda)\sim \lambda^{2\beta - 3}/\log \left( 1/ \lambda \right)^{\beta - 2}$ for the whole ultra-small-world regime $\beta\in (2,3)$. None of these papers deal with the case $\beta\ge 3$, which is the main focus of this paper. 

\paragraph{Scale-free percolation.} This paper will study the contact process on the Scale-Free Percolation (SFP) random graph, which is a model first introduced in \cite{deijfen_scale-free_2013} on the lattice $\mathbb{Z}^d$, and later extended to Poisson point processes on $\R^d$ in \cite{deprez2019scale}, see also \cite{dalmau_scale-free_2019}. Vertices of such a point process get assigned i.i.d.~weights $W_x$ according to a power-law distribution, such as the Pareto distribution. Conditionally on their weights, each couple of vertices $x,y$ gets connected independently by an edge with probability 
\begin{equation}
        p_{x,y} = 1 - \exp \left(-\rho \frac{W_x W_y}{\|x-y\|^\alpha} \right),
\end{equation}
where $\alpha>0$ is a parameter governing long-range spatial interactions between vertices and $\rho>0$ is a percolation parameter regulating the overall density of edges. This model replicates key properties observed empirically in many real-world networks, such as a positive clustering coefficient (a high density of triangles in the graph), scale-freeness (the degree distribution follows a power law of exponent $\beta$) and, under a certain regime of parameters, the small-world phenomenon (distant vertices are connected by short paths on the graph). These properties make it particularly well-suited for modelling certain spatially dependent phenomena in the real world \cite{krioukov2010hyperbolic,barthelemy2022spatial}. SFP belongs to a larger class of models with similar behaviour, called kernel-based random graphs \cite{jorritsma2023cluster,cipriani2025spectrum}, which include the HRG \cite{krioukov2010hyperbolic} and the geometric inhomogeneous random graph (GIRG) \cite{bringmann2019geometric}, and are closely related to the power-law radius geometric random graph \cite{hirsch2017heavy} and the spatial preferential attachment model \cite{aiello2008spatial}. All of these spatial models present a phase transition in the connectivity of the graph at $\beta = 3$, marking the transition between a regime with finite mean and infinite variance for the degree distribution ($\beta \in (2,3)$) and the regime with finite mean and variance ($\beta > 3$). The drastic change in the topology of the graphs as the parameters change impacts the behaviour of stochastic processes defined over these structures, see e.g.~\cite{cipriani2024scale, HHJ17, komjathy2020explosion, komjathy2023four, bansaye2024branching, linker_contact_2021, gracar_contact_2022}.

\subsection{Our contribution}

We will consider the contact process on the SFP random graph with vertex set given by the restriction to a finite box of a Poisson point process in $\R^d$. We will assume the weights on the vertices of the graph to be distributed according to a Pareto distribution of parameter $\tau-1>1$ and we will take $\rho>\rho_c$ so that the graph has a unique infinite component, see Section \ref{subsec:SFP} for a precise description of the model. 

Our main result deals with the extinction time of the contact process:
\begin{thm}\label{main_theorem}
Consider the extinction time $\tau_{\cg_n}$ of the contact process on the SFP random graph restricted to the box $[0,n^{1/d})^d$ in dimension $d\geq 1$. Let $\alpha>d$ and $\tau>1$ and let $\rho>\rho_c$.
    \begin{enumerate}[label=(\roman*),ref=(\roman*)]
        \item\label{(i)} If 
    \(\gamma=\alpha(\tau-1)/d\in (1,2)\), then for all \(\lambda >0\) 
    there exists \(c>0\) such that
   \begin{equation}\label{polya0}
   \lim_{n\to\infty}\P(\tau_{\cg_n} \geq \e^{cn}) = 1\,.
    \end{equation}   
    \item\label{(ii)}    If \(\gamma=\alpha(\tau-1)/d>2\) and \(\alpha\in (d,2d)\), then for all $\lambda > 0$ there exists $c>0$ such that
\begin{equation}\label{polya}
    \lim_{n\to\infty}\P \left( \tau_{\cg_n} \geq \e^{cn(\log n)^{-A}}\right) = 1
\end{equation}
for all \(A>2\gamma/(2-\alpha/d)\).
    \end{enumerate}
\end{thm}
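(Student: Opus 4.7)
The two parts of the theorem call for coarse-graining scales of a rather different nature, so I would treat them separately. For part \ref{(i)}, where $\gamma\in(1,2)$ is the ultra-small-world regime with infinite weight variance, I would adapt the strategy of \cite{linker_contact_2021} to SFP. Tile $[0,n^{1/d})^d$ into cells of constant side length and show that, with probability uniformly bounded away from $0$ in $n$, each cell contains a \emph{good centre} $\v$: a vertex whose weight exceeds a $\lambda$-dependent threshold and which, in $\cg_n$, has at least $\lambda^{-2}$ neighbours in the infinite cluster. Classical local-survival estimates for the contact process on a star (see e.g.~\cite{berger2005spread,mountford_exponential_2016}) then yield that, from full occupancy of such a star, the infection keeps a positive density of infected leaves during a time of order $\e^{c_0}$. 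Two good centres in neighbouring cells are joined by a direct long-range SFP edge with probability bounded below. A coupling with a super-critical $1$-dependent oriented site percolation on $\Z^d\times\Z_+$ and a standard block argument then give $\tau_{\cg_n}\ge \e^{cn}$ with high probability.

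For part \ref{(ii)}, where $\gamma>2$ and $\alpha\in(d,2d)$, the key new difficulty is that, with finite second moment, a box of constant volume no longer contains a vertex of sufficient weight: the maximum weight in a box of volume $V$ is only of order $V^{1/(\tau-1)}$. The mesoscopic scale therefore has to grow with $n$. I would tile $[0,n^{1/d})^d$ into $m_n:=n/(\log n)^A$ boxes $B_i$ of side length $(\log n)^{A/d}$ and show that, with probability at least $1-n^{-c}$, each $B_i$ contains a \emph{centre} $\xii$ of weight at least $(\log n)^{\eta}$ for a parameter $\eta>0$ to be optimised. Around each centre, using a generalisation of the key estimate of \cite{mountford_exponential_2016} to the inhomogeneous Pareto setting, I would establish a local-survival time $T_n=\exp(c(\log n)^{\eta'})$, $\eta'>0$, on a polylogarithmic nested-star structure. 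Using the polylogarithmic upper bound on the chemical distance of SFP in the regime $\alpha\in(d,2d)$, two centres in adjacent boxes $B_i,B_{i+1}$ can be linked in $\cg_n$ by a path of length $\ell_n\le(\log\log n)^{\Delta}$, $\Delta=\Delta(\alpha/d)$, built as a hierarchical tower of vertices of geometrically increasing weights. During each time window $T_n$, transmission along such a path succeeds with probability at least $(\lambda/(1+\lambda))^{\ell_n}$; since $T_n \gg (\lambda/(1+\lambda))^{-\ell_n}$, one obtains overwhelmingly many successful transmissions. A coupling with a super-critical $1$-dependent oriented site percolation on the lattice of boxes $B_i$ then gives $\tau_{\cg_n}\ge \exp(c m_n)=\exp(cn/(\log n)^A)$. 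The condition $A>2\gamma/(2-\alpha/d)$ emerges from balancing (a) the existence of a centre per box, requiring $A>\eta(\tau-1)$, against (b) the existence of a connecting tower whose intermediate weights span the side length $(\log n)^{A/d}$, which forces $\eta$ to be sufficiently large relative to $A\alpha/d$.

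The main technical obstacle is the local-survival step: in \cite{mountford_exponential_2016} the relevant stars have size that is a deterministic function of the graph size and their leaves behave roughly uniformly, whereas in SFP the mesoscopic star is random in size and its leaves inherit the Pareto weight distribution with non-trivial spatial correlations coming from the long-range edges. Producing a quantitative survival exponent $\eta'>0$ large enough to absorb the transmission cost $\ell_n\log(1/\lambda)$, while keeping the various scales compatible with the constraint on $A$ coming from the tower construction, is where the multi-scale, chemical-distance and percolation ingredients of the proof interlock and is the step I expect to require the most delicate work.
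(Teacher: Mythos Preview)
Your outline for part \ref{(i)} is reasonable and close in spirit to what the paper does, though the paper follows \cite{linker_contact_2021,gracar_contact_2022} more literally: rather than a flat tiling into constant-size cells plus an oriented-percolation coupling, it builds a hierarchical family of boxes $B_{k,\v}\subset\R^d\times(1,\infty)$ indexed by $k_n\asymp\log n$ weight layers, shows that each box contains a star of constant degree $S$, and that the star in a box is joined by a \emph{single edge} to the star in its parent box. This directly produces an $(S,1,2^d+2)$-constellation with $\Theta(n)$ stars and one then applies the paper's Proposition~2.2 (the generalised constellation lemma \`a la \cite{mountford_exponential_2016}). Your variant with flat cells and site percolation is a legitimate alternative; just note that the phrase ``at least $\lambda^{-2}$ neighbours in the infinite cluster'' does not make sense in a finite box and is not what is needed.

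For part \ref{(ii)} there is a genuine gap, and your account of where the threshold $A>2\gamma/(2-\alpha/d)$ comes from is off. You propose to link adjacent centres $\hx_i,\hx_{i+1}$ by the small-world chemical-distance path of length $\ell_n\le(\log\log n)^\Delta$. If that were available with the uniformity you need, then your balance ``$A>\eta(\tau-1)$ versus $\eta$ large relative to $A\alpha/d$'' would produce a constraint \emph{strictly weaker} than $A>2\gamma/(2-\alpha/d)$, so the argument cannot be closing at the point you think it is. The paper does \emph{not} use chemical-distance towers at all. It connects $\hx_i$ to $\hx_{i+1}$ by paths that stay inside $B_i\cup B_{i+1}$ and have length $c_3(\log n)^{\nu_p}$ with $\nu_p>1$ --- polylogarithmic in $n$, not in $\log n$. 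These paths are obtained by a Coppersmith--Gamarnik--Sviridenko-style multiscale refinement of each coarse box into sub-boxes down to a finest scale of volume $(\log n)^{\nu_p}$, and the key input is that the \emph{largest connected component} in every fine box has linear size. The failure probability of that event is governed by the surface-order large-deviation estimate of \cite{deprez2019scale}, namely $\exp\bigl(-c(\log n)^{\nu_p(2-\alpha'/d)}\bigr)$ for any $\alpha'\in(\alpha,2d)$. A union bound over the $\sim n$ fine boxes forces $\nu_p(2-\alpha/d)>1$; since the multiscale scheme gives $\nu_p\ge A/(2\gamma)$, this is precisely $A>2\gamma/(2-\alpha/d)$. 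So the constraint is a \emph{percolation large-deviation} constraint, not a weight-versus-tower balance, and this ingredient is absent from your sketch.

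Two further points on part \ref{(ii)}. First, the paper also needs the paths to be pairwise disjoint (to get a genuine tree constellation with $\Delta=2$), which it achieves by a red/blue chessboard colouring of the fine boxes and alternating colours along the sequence of coarse boxes; your oriented-percolation framework sidesteps disjointness but instead needs finite-range dependence, which likewise requires the connecting paths to stay in $B_i\cup B_{i+1}$. Second, the ``local survival'' step you single out as the hardest is in fact the easy part once the path length $D_n=(\log n)^{\nu_p}$ is fixed: one only needs $S_n\ge C\lambda^{-2}\log(1/\lambda)\,D_n$, i.e.\ $\nu_s>\nu_p$, and this is a soft choice of parameters. The delicate work is entirely in constructing the paths with stretched-exponential success probability, and your $(\log\log n)^\Delta$ proposal does not deliver this.
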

The parameter $\gamma$ (corresponding to $\beta-1$ in the introduction) describes the tail of the degrees of the graph (see \cite{dalmau_scale-free_2019}). Therefore part \ref{(i)} of the theorem covers the regime where the degrees have finite expectation but infinite variance and the graph exhibits the ultra-small-world property (see discussion in Section \ref{subsec:SFP}). Notice that when $\gamma\in (1,2)$ we have $\rho_c=0$ (\cite[Theorem 3.2]{deprez2019scale}), so that \eqref{polya0} holds for all positive values of the percolation parameter $\rho$.

The main contribution of this paper is the proof of part \ref{(ii)}. Here we deal with the range of parameters that guarantee both finite mean and variance of the degree of the nodes, and where the graph presents ``only'' the small-world property. Since the graph is significantly sparser in this regime than in case \ref{(i)}, the approach of \cite{linker_contact_2021} fails, and one has to develop finer techniques to deal with the extinction time. In Section \ref{Techniques}, we survey the main ideas of the proof. 

We also include, for completeness, a result on the non-extinction probability of the process, the proof of which comes again by an extension of the results of \cite{linker_contact_2021}. We sketch this proof in Section \ref{sec:non_extinction}.

\begin{thm}\label{thm:non-extinction} 
Let \(\alpha > d \) and \(\tau>1\) be such that 
    \(\gamma=\alpha(\tau-1)/d\in (1,2)\). As $\lambda \rightarrow 0$, 
    \begin{equation}
        \Gamma(\lambda) \asymp 
        \begin{cases}
            \lambda^{\frac{1}{2-\gamma}}, & \text{ if } \gamma \in \left( 1, 
            \frac{3}{2} \right] \\
            \frac{\lambda^{2\gamma - 1}}{\log \left( 1/ \lambda \right)^{\gamma 
            - 1}} & \text{ if } \gamma \in \left(\frac{3}{2}, 2 \right)
        \end{cases}.
    \end{equation}
\end{thm}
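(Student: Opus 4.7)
The plan is to adapt the machinery developed in \cite{linker_contact_2021} (for the hyperbolic random graph) and refined in \cite{gracar_contact_2022} to the SFP model in the ultra-small-world regime $\gamma \in (1,2)$. The core intuition is that in this regime the dominant mechanism for long-time survival of the contact process is the existence of \emph{hubs} (vertices with very high weight and hence very high degree) that can sustain the infection locally on their star neighbourhood. A hub with $k$ neighbours sustains the infection for a time exponential in $\lambda^2 k$ provided $\lambda^2 k \gg 1$, via the classical star analysis going back to Berger--Borgs--Chayes--Saberi. Consequently, the question of non-extinction reduces to quantifying how quickly the infection can reach a hub, and how efficiently it can hop from one hub to the next.

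For the lower bound I would follow an exploration-and-ignition scheme: starting from a single infected vertex $v$, one shows that, with probability of the claimed order, the infection reaches within a suitable random time a ``seed'' hub $x$ of weight $W_x \geq w(\lambda)$, where $w(\lambda)$ is chosen to optimise the trade-off between the Pareto probability $\P(W_x\geq w)\asymp w^{-(\tau-1)}$ of producing such a hub nearby and the survival time $\e^{a \lambda^2 W_x^{d/\alpha}}$ of the star around $x$. Two sub-regimes appear according to whether a single huge hub suffices or whether one needs to chain several moderately large hubs, and the threshold $\gamma=3/2$ is exactly the point where the optimal strategy switches. In the regime $\gamma\in(1,3/2]$, the optimum is a single hub of weight roughly $w(\lambda)\sim \lambda^{-2\alpha/(d(2-\gamma))}$, yielding $\Gamma(\lambda)\asymp \lambda^{1/(2-\gamma)}$. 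In the regime $\gamma\in(3/2,2)$, the optimum requires a chain of moderately large hubs each of degree of order $\lambda^{-2}\log(1/\lambda)$, which produces the bound $\lambda^{2\gamma-1}/\log(1/\lambda)^{\gamma-1}$ after accounting for the logarithmic correction needed to make each hub-to-hub jump succeed within the star's lifetime.

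For the matching upper bound I would, as in \cite[Section 4]{linker_contact_2021}, construct a comparison with a simpler ``skeleton'' process whose extinction probability we can control: one partitions the trajectory of the contact process into excursions between visits to large hubs, and dominates the number and productivity of these excursions by a suitable multi-type branching process calibrated on the weight distribution. Using the Pareto tail of the weights and the connection probabilities $1-\e^{-\rho W_xW_y/\|x-y\|^\alpha}$, one controls the probability that an excursion starting from a vertex of weight $w$ reaches another hub of weight at least $w'$ before dying; this estimate then feeds into a supermartingale argument bounding the extinction probability from above by the same expression that appeared in the lower bound.

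The main obstacle is the adaptation of the star-survival and hub-to-hub estimates to the spatial setting of SFP: because the connection probabilities depend on the Euclidean distance between endpoints, one must simultaneously control the \emph{weights} and the \emph{positions} of the hubs in the chain, ensuring that the typical distance between consecutive hubs does not degrade the effective infection rate below what the star's lifetime can tolerate. Aligning the polylogarithmic correction on both sides for $\gamma\in(3/2,2)$ is the step where the largest care is needed, and it is there that one must genuinely refine the arguments of \cite{linker_contact_2021} rather than merely transplant them, in order to accommodate the SFP-specific interplay between the power-law weight distribution and the spatial connection kernel.
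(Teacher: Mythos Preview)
Your high-level plan matches the paper's: both adapt the proof of \cite[Theorem 1.1]{linker_contact_2021} to SFP, with the lower bound driven by reaching a hub of appropriate weight (single hub for $\gamma\le 3/2$, a chain of moderate hubs for $\gamma>3/2$) and the upper bound obtained by controlling paths that avoid such hubs. Two points, however, deserve correction or sharpening.

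First, the technical novelty the paper isolates is not the spatial control you emphasise but rather the \emph{softness} of SFP: in HRG the edge indicator is a deterministic function of positions and heights, whereas here $p_{x,y}=1-\exp(-\rho W_xW_y/\|x-y\|^\alpha)$ is genuinely random. The paper handles this by a ball decomposition: for a vertex of weight $w_x$, integrate separately over $B_w=\{y:\|x-y\|\le (w_xw)^{1/\alpha}\}$, where $p_{x,y}\ge 1-e^{-\rho}$, and over its complement, where $p_{x,y}\le \rho w_xw/\|x-y\|^\alpha$ is integrable since $\alpha>d$. This yields $\E_{\x}[D_\x]\asymp w_x^{d/\alpha}$ and replaces every hard-threshold count in \cite{linker_contact_2021} by a two-sided estimate of the same order. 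Your proposal does not mention this device, and without it the transplant from HRG does not go through. For the dimensional issue (your ``positions'' concern), the paper simply borrows the annulus construction of \cite[Proposition 2.2]{gracar_contact_2022} in place of the one-dimensional intervals used for the star-like subgraph in \cite[Lemma 4.2]{linker_contact_2021}.

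Second, your upper bound mechanism (excursions dominated by a multi-type branching process and a supermartingale) is not the one used in \cite{linker_contact_2021} or in the paper. There the upper bound in the first regime is obtained by a first-moment path-counting argument via the multivariate Mecke formula, summing over all infection paths that stay below the critical weight threshold; the second regime requires a more delicate variant because no single threshold gives the right exponent directly. A branching/supermartingale comparison might also work, but you would need to verify that it reproduces the logarithmic correction $(\log(1/\lambda))^{-(\gamma-1)}$ for $\gamma\in(3/2,2)$, which in the path-counting approach comes out of the combinatorics of the Mecke expansion rather than from a martingale bound.
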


\subsection{Techniques and outline of the paper.}\label{Techniques}

The main idea used in \cite{linker_contact_2021} to show the exponential extinction time of the contact process for any infection rate $\lambda>0$ on a hyperbolic random graph $\cg_n$ of size $n$ is to find a subgraph $G_n$ of $\cg_n$ on which the contact process is easier to analyse. Roughly put, $G_n$ is a tree that contains a set $J_n$ of special nodes, called stars, with a high degree $S$ (cf.~the definition of a \textit{constellation} in Definition \ref{constellation}). The cardinality of $J_n$ must be a positive fraction of $n$, and the stars must be at a distance of at most $D$ from each other on the tree $G_n$. The larger the number of its neighbours, the longer each star can retain the infection (as shown, for example, in \cite{berger2005spread} and \cite{chatterjee_contact_2009}) and can infect other stars that are nearby. So, if $S$ is large enough, then the contact process on $G_n$ (and hence on $\cg_n$) has an extinction time that is exponential in the number of stars. \cite{linker_contact_2021} deals with the regime with $\beta\in(2,3)$ (finite mean and infinite variance of the degree of the nodes), making use of the (very) heavy-tailed degree distribution to construct tree-like constellations with a sufficient number of stars. With a straightforward transformation and some minor adjustments, the HRG model can be transformed into SFP in dimension $d=1$, and so the exponential extinction time can be easily proved also for SFP in the regime $\gamma=\beta-1\in(1,2)$ in dimension $1$. With little work, this can be also generalized to higher dimensions. This is the content of Section \ref{sec:extinction_time_gamma12}, which proves our Theorem \ref{main_theorem}, part \ref{(i)}.

Unfortunately, this approach fails as soon as $\gamma>2$. In this case, the degree distribution has lighter tails, and it is not possible to find a sufficiently high number of stars that are at a fixed distance from each other: the geometric structure of the graph becomes even more relevant in the study of the process. %becomes the main feature of the graphs instead of the structure of high-degree nodes. 
%This is also the case in long-range percolation, which is the limit of SFP when $\tau\to\infty$. 
The first step to prove Theorem \ref{main_theorem}, part \ref{(ii)}, then, is to generalize \cite[Theorem 1.3]{mountford_exponential_2016} allowing the parameters $S=S_n$ and $D=D_n$ to grow with $n$, see Proposition \ref{prop:mountford_strategy}. While the proof of this generalisation only requires minor changes, we rewrite it in a rather self-contained manner with the scope of isolating it and making it independent of the specific choice of the sequence $( \cg_n)_{n\in\mathbb N}$. The rest of the proof is spent to find a sequence of subgraphs $G_n$  of the SFP graphs $\cg_n$ that are constellations where: 
\begin{itemize}
  \item consecutive stars in $J_n$ are at a small graph distance from each other ($D_n\leq c(\log n)^{\nu_p}$);
  \item the stars in $J_n$ have a sufficiently high degree ($S_n\geq c(\log n)^{\nu_s}$ with $\nu_s>\nu_p$) so that a star can retain the infection long enough to be able to pass the infection to the closest stars;
  \item $G_n$ contains a sufficiently high number of such stars ($|J_n|>cn(\log n)^{-A}$).
\end{itemize}
The proof is quite technical and combines a multi-scale analysis of the graph, the study of the chemical distance between vertices and percolation arguments. 

\smallskip

\begin{rem}
    We point out that part \ref{(ii)} of Theorem \ref{main_theorem} yields an exponential lower bound for the extinction time $\tau_{\cg_n}$ with a logarithmic correction $(\log n)^{-A}$ at the exponent, see \eqref{polya}. This comes from the fact that we can only find a set of stars of cardinality of order $n(\log n)^{-A}$, with $A$ carefully chosen, and we don't think it is possible to improve much this lower bound if one follows the present approach. We cannot exclude that a full exponential lower bound $\e^{-cn}$ for $\tau_{\cg_n}$ can be found. If this were not the case, though, the logarithmic correction would be an interesting feature of the small-world regime in contrast with the ultra-small-world regime that has never been observed before, at least up to the authors' knowledge. It is to be noted that a lower bound with a logarithmic correction can be established for any connected graph  when $\lambda>\lambda_c(\Z)$ \cite[Theorem 1.2]{schapira2017extinction}: for all $\epsilon>0$, there exists a universal constant $c_\epsilon>0$ such that, for all connected graphs $G$,
\[
    \P(\tau_G \ge \exp(-c_\epsilon |G|/\log(|G|)^{1+\epsilon}) > c_\epsilon\,.
\]
 By contrast, the result we establish in this work is valid for all $\lambda>0$.
\end{rem} 

The rest of the paper is organised as follows:
\begin{itemize}
  \item In Section \ref{subsec:MainResults} we properly define the SFP random graph model and comment on the different parts of its phase diagram. Afterwards, in Section \ref{subsec:CP_on_stars}, we give the definition of a constellation graph and prove Proposition \ref{prop:mountford_strategy}. 
  \item In Section \ref{sec:extinction_time_gamma2} we prove the main result of the paper, Theorem \ref{main_theorem}, part \ref{(ii)}. In Section \ref{subsec:proof_extinction_gamma2}, we show how to obtain the statement from Proposition \ref{prop:mountford_strategy}. In Section \ref{partition} we lay the foundations for our multiscale analysis. In Section \ref{identification} we explain how to build the constellation subgraph of $\cg_n$ and claim that the probability of finding such a structure in $\cg_n$ tends to $1$ as $n\to\infty$. This is finally proved with the combination of four different lemmas, the proofs of which are carried out in the following four subsections.
  \item Section \ref{sec:extinction_time_gamma12} is dedicated to the proof of Theorem \ref{main_theorem}, part \ref{(i)}.
  \item Finally, in Section \ref{sec:non_extinction} we give a sketch of the proof of Theorem \ref{thm:non-extinction}.
\end{itemize}

\paragraph{Acknowledgements} The authors thank Elisabeta Vergu for her intellectual guidance and contributions to this work. Regrettably, she passed away before the submission of this manuscript. The first author (A.B.) dedicates this paper to her memory, acknowledging her foundational role in shaping this research and starting their academic journey. The authors wish to thank J\'{u}lia Komj\'{a}thy for suggesting this problem and for many interesting discussions. A.B. is supported by the Fondation Mathématique Jacques Hadamard. M.S. is supported by the MUR Excellence Department Project MatMod@TOV, awarded to the Department of Mathematics, University of Rome Tor Vergata, CUP E83C18000100006, and by the MUR 2022 PRIN project GRAFIA, project code 202284Z9E4. M.S. is also part of the INdAM group GNAMPA.

%-----------------------------------------------------------------------------------%
%-----------------------------------------------------------------------------------%
\section{Preliminaries}\label{subsec:MainResults}
%-----------------------------------------------------------------------------------%
%-----------------------------------------------------------------------------------%

%-----------------------------------------------------------------------------------%
\subsection{Properties of Scale-Free Percolation}\label{subsec:SFP}
%-----------------------------------------------------------------------------------%

The Scale-Free Percolation (SFP) random graph $\cg$ in continuous space is constructed as follows. Let $d\ge 1$ and let $\alpha>0,\ \tau>1$ and $\rho>0$. 
\begin{itemize}
    \item First sample a configuration of points $\cx$ according to a homogeneous Poisson point process with a unit intensity on $\mathbb{R}^d$. These constitute the vertex set of the graph.
    \item Then assign to each vertex $x \in \cx$ a random weight $W_x \geq 1$, in such a way that $(W_x)_{x \in \cx}$ is a sequence of i.i.d. random variables following a Pareto distribution on $[1,+\infty)$ with parameter $\tau-1$:
    \begin{equation}\label{eq:weights_definition}
        \P(W_x \geq t) = t^{-(\tau - 1)},\quad t \geq 1.
    \end{equation}
    \item Between every pair of vertices $x, y \in \cx$ an unoriented edge is drawn with probability
    \begin{equation}\label{eq:SFP base}
        p_{x,y} = 1 - \exp \left(-\rho \frac{W_x W_y}{\|x-y\|^\alpha} \right),
    \end{equation}
    where $\|\cdot\|$ is the Euclidean norm and, conditionally on the realisation of the weights, the presence of each edge is independent of the others. 
\end{itemize}

We will sometimes make use of an FKG inequality for inhomogeneous random graphs, which can be stated using an alternative Poissonian construction of SFP. We refer to \cite{gracarRecurrenceTransienceWeightdependent2022} or \cite{heydenreichLaceExpansionMeanField2023} for a full description. Briefly, in this setting, the graph \(\cg\) becomes a deterministic function of a point process \(\xi\) on the space \((\R^d\times[1,\infty))^{[2]}\times[0,1]\), where for a measurable space \(E\) the notation \(E^{[2]}\) represents the space of all sets \(e\subset E\) with cardinality 2, used here to encode potential edges between vertices in $E$. For us, \(\R^d\) represents the spatial coordinates of a vertex, \([1,\infty)\) is the set in which the weights take their values and \([0,1]\) is an independent uniform mark for each edge used to perform the sampling conditionally on positions and weights of its endpoints, that is, an edge is present if the mark associated to it has a value smaller than $p_{x,y}$ of \eqref{eq:SFP base}.
In this formalism, a function \(f\) on the space of point processes on \((\R^d\times[1,\infty))^{[2]}\times[0,1]\) is called \emph{increasing} if it is increasing with respect to set inclusion and value of the weights and decreasing with respect to edge marks. Measurable events \(F\) are increasing if their indicator function \(\mathbf{1}_F\) is increasing. The FKG inequality then states that if \(F,G\) are increasing events, then
\begin{equation}\label{eq:IneqFKG}
    \P(F\cap G) \ge \P(F)\P(G)\,.
\end{equation}
In other words, increasing events are positively correlated.

\begin{figure}[th]
    \centering
    \includegraphics[width=\textwidth]{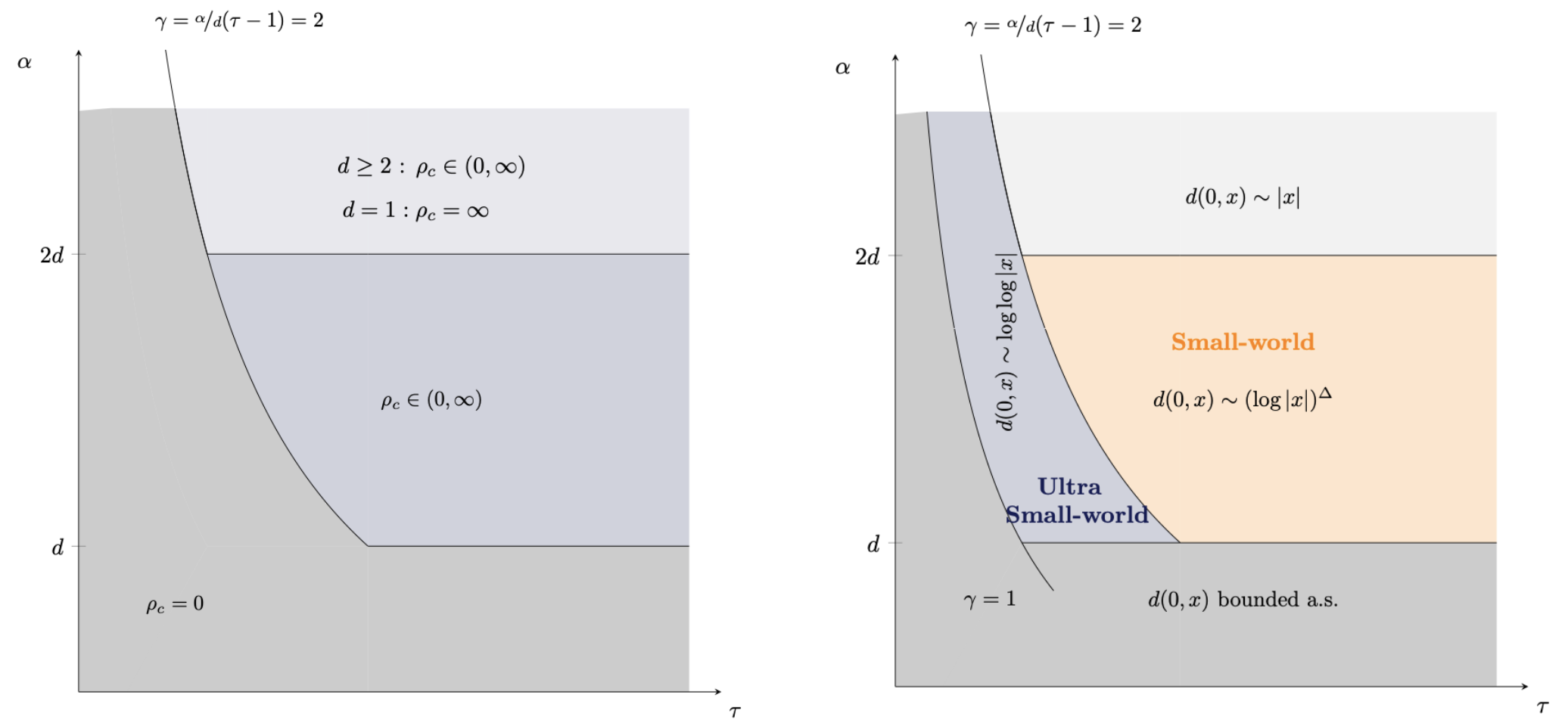}
    \caption{Value of the critical percolation parameter (left) and graph distances (right) for SFP in $\mathbb{R}^d$, $d \geq 1$, for different values of $\alpha$ and $\tau$.}
    \label{fig:regimes}
\end{figure}

Let us take a look now at the behaviour of SFP for different values of its parameters. We denote by $D_x = \{ y \in \cx,\ y \sim x \}$ the degree of a vertex $x$ in the graph. It was shown in \cite[Theorem 2.2]{dalmau_scale-free_2019} that for almost every realization $\cx$ of the Poisson point process and for all $x\in \cx$, there exists a slowly varying function $\ell(\cdot) = \ell (\cdot, \cx, x)$ such that 
\begin{equation*}
    \P( D_x > s \,|\,\cx) = s^{-\gamma}\ell (s).
\end{equation*}
Hence, the degree of each vertex follows a power-law distribution with exponent $\beta = \gamma + 1$. In particular, when $\gamma \in (1,2)$, the degree distribution has a finite mean but infinite variance, while for $\gamma > 2$, both the mean and variance are finite. The threshold $\gamma = 2$ marks a phase transition affecting both the typical graph distance and the percolation behaviour of SFP. Let $\rho_c$ be the critical percolation parameter, defined as
\begin{equation} \label{eq:rho_c}
    \rho_c = \inf \{ \rho > 0,\ \P_0(\vert \mathcal{C}(0) \vert = \infty) > 0 \}\,,
\end{equation}  
where $\P_0$ is the Palm distribution (obtained by adding to the model an additional vertex at the origin) and $\vert \mathcal{C} (0) \vert$ is the cardinality of the connected component containing the origin. For $\gamma \in (1,2)$ and $\alpha > d$, we have $\rho_c = 0$, which implies that an infinite connected component exists with positive probability for any $\rho > 0$. In this regime, the graph distance $d(x, y)$ between two nodes $x$ and $y$ in the infinite component scales as $\log \log(\| x-y \|)$, a phenomenon known as the \emph{ultra-small world} property. When $\gamma > 2$ and $\alpha \in (d, 2d)$, the percolation threshold satisfies $\rho_c > 0$, leading to a phase transition: for $\rho < \rho_c$, there is almost surely no infinite connected component, while for $\rho >\rho_c$ there exists one with positive probability. In this case, edges are more scarce, causing graph distances to scale proportionally to (some power $\Delta$ of) the logarithm of Euclidean distances, a feature referred to as the \emph{small-world} property (see Figure \ref{fig:regimes}). See \cite{deprez2015inhomogeneous, deprez2019scale, hao2023graph, lakis2024improved} for more details on graph distances on SFP.

\textbf{Notation.} From now on, we will call $\cg_n$ the SFP random graph $\cg$ restricted to the box $[0,n^{1/d})^d$, that is, the (almost surely) finite random graph with vertex set $\cx\cap [0,n^{1/d})^d$ and the same set of edges connecting these points in $\cg$. For a given $n$, we let $(\xi_{t})_{t \geq 0}$ be the contact process on $\cg_n$. At any time, we identify a configuration of the contact process $\xi_t$ with the subset $\{x \in V,\ \xi_t(x) = 1\}$ of infected vertices. For a subset of vertices $A$, we note by $\xi_t^{A}$ the contact process with starting configuration where only the vertices in $A$ are infected, i.e.~such that $\xi_0 = A$. For every $n \geq 0$, we let $\tau_{\cg_n} = \inf \{t,\ \xi_t^{\cg_n} = \emptyset \}$ be the extinction time of the contact process $\xi_t^{\cg_n} :=\xi_t^{\cx\cap [0,n^{1/d})^d}$ starting from the whole graph infected.

\begin{figure}[ht]
    \centering
    \includegraphics[width=\textwidth]{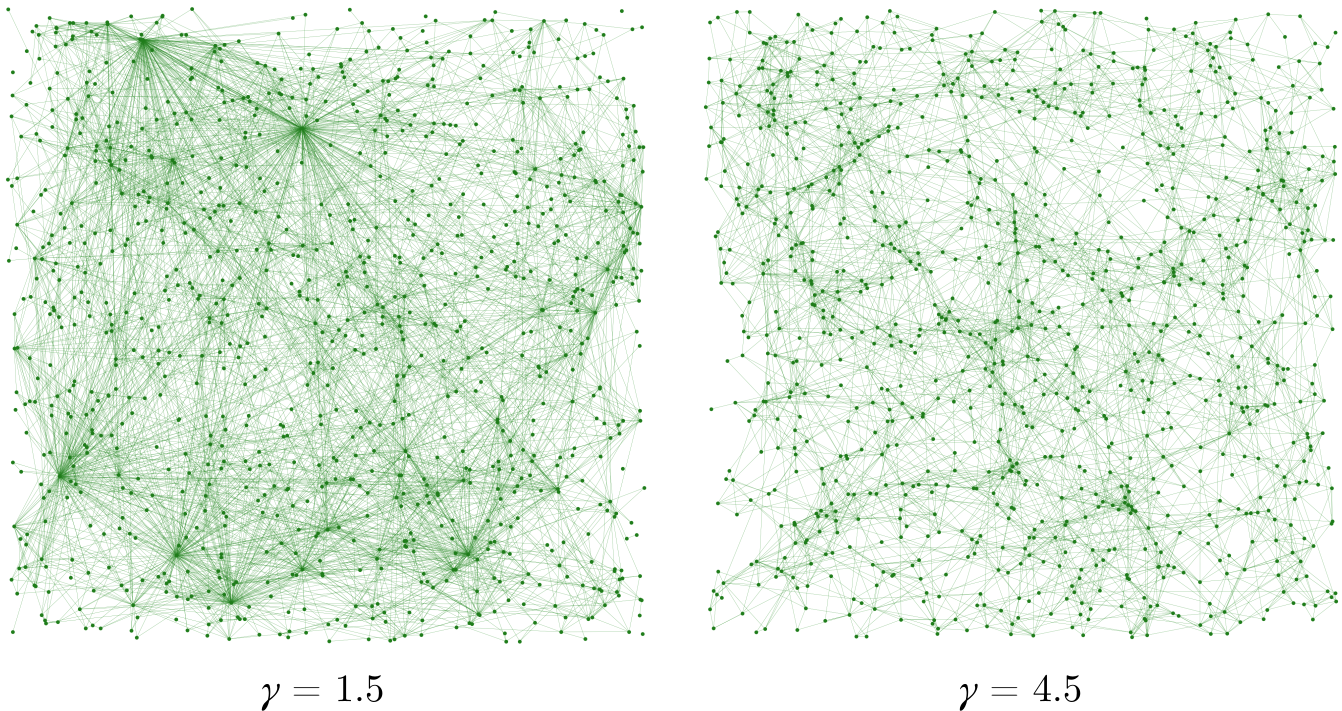}
    \caption{Realizations of SFP with $n=1000$ vertices in a square box $[0,1]^2$ for $\alpha=2.5$, $\tau=2.2$ so that $\gamma=1.5$ (left) and $\alpha=3$, $\tau=4$ so that $\gamma=4.5$ (right). The percolation parameter $\rho$ was chosen such that both graphs had a similar number of edges (average degree approximately equal to 10).}
    \label{fig:ComparaisonGamma}
\end{figure}
    
%-----------------------------------------------------------------------------------%
%-----------------------------------------------------------------------------------%
\subsection{The contact process on constellations}\label{subsec:CP_on_stars}
%-----------------------------------------------------------------------------------%
%-----------------------------------------------------------------------------------%

The present section aims to generalise \cite[Theorem 1.3]{mountford_exponential_2016}. To this end, we first introduce the concept of a constellation graph. If $G=(V,E)$ is a graph, we denote the degree of a node $x \in V$ by $\deg(x)$. Let also $\text{dist}(x, y) $ be the graph distance between any $x,y\in V$. 

\begin{definition}\label{constellation}
Let $S\ge 2, D\ge 1$ and $\Delta\ge 2$. We say that the graph $G$ is a $(S,D,\Delta)$-\emph{constellation} if there exists a set of distinguished vertices $J\subseteq V$ such that the following properties hold:
\begin{enumerate}[label=(P\arabic*)]
    \item $G$ is a connected tree,
    \item For all $x \in J$, $\deg(x) \geq S/2$, 
    \item $\text{dist}(x, y) \leq D$ for all $x, y \in J$ such that $x \overset{*}{\sim} y$, where $x \overset{*}{\sim} y$ indicates that  the only self-avoiding path between $x$ and $y$ does not contain any other vertex of $J$.
    \item\label{degree3} The graph $G' = (V', E')$ given by $V' = J$ and 
\begin{equation*}
    E' = \{\{x, y\}, x, y \in J \text{ and }x \overset{*}{\sim} y \}
\end{equation*}
is a connected tree with degree bounded by $\Delta$. 
\end{enumerate}
\end{definition}

\begin{figure}[ht]
    \centering
    \includegraphics[width=.6\textwidth]{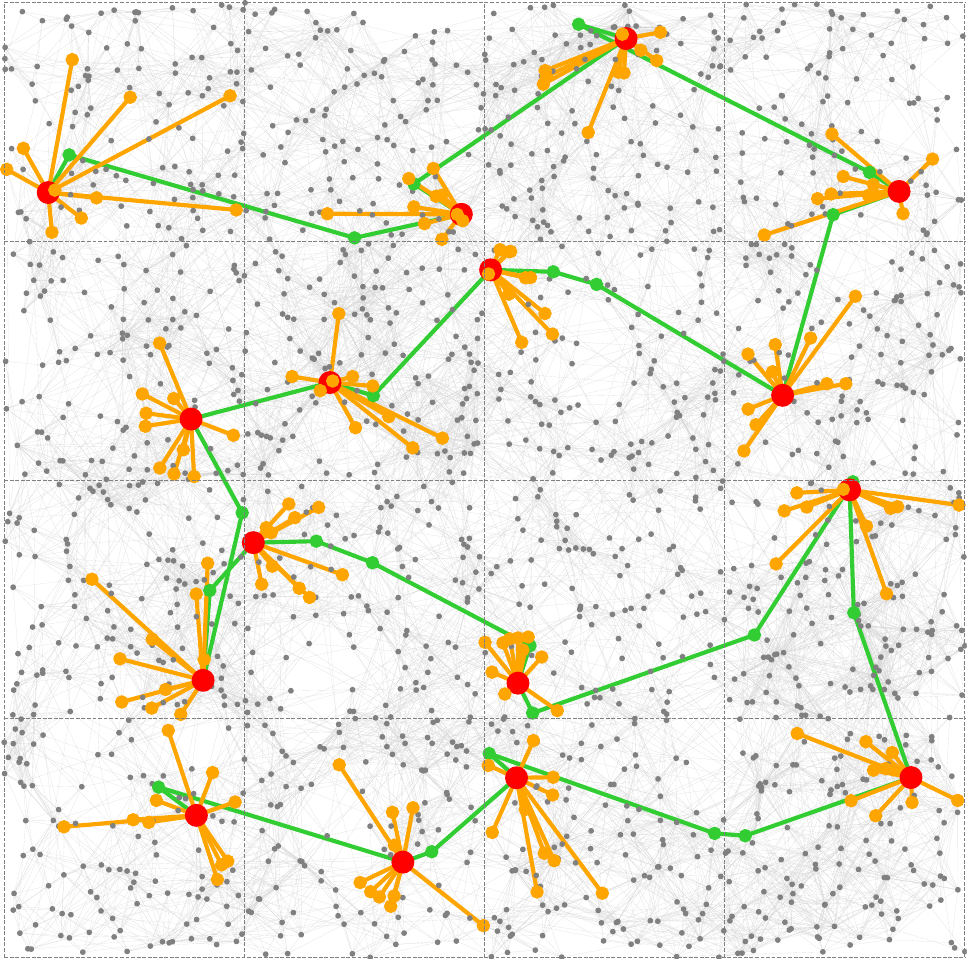}
    \caption{Representation of a (10,3,2)-constellation with $|J|=16$ within a simulation of SFP in dimension 2, obtained with a modified version of the simulator described in \cite{blasius2022efficiently}. The set of distinguished vertices $J$ is drawn in red. For each of these vertices, a set of 10 neighbours is drawn in orange, and the paths linking the distinguished vertices are drawn in green.}
    \label{fig:SFP}
\end{figure}

\begin{prop}\label{prop:mountford_strategy}
    Let $\lambda>0$. Let $(G_n)_{n \in \mathbb{N}}$ be a sequence of 
    $(S_n,D_n,\Delta)$-constellations with a number of vertices growing to infinity as $n\to\infty$, such that $\Delta\ge 2$ and %such that %$S_n$ and 
    %$D_n$ satisfy:
    \begin{equation}\label{star_degree}
        S_n\geq C \lambda^{-2} \log ({1}/{\lambda}) D_n
    \end{equation}
    for some constant $C>0$ large enough. For each $n\in\mathbb N$, let $J_n$ be a corresponding set of distinguished vertices in $G_n$. % satisfing properties 1--4 in Definition \ref{constellation}. 
    % . Suppose that for each $n \in \mathbb{N}$, the graph $G_n$ contains a subgraph $G_n'$ that is a constellation tree with parameters $D_n,S_n$ such that
        % \begin{equation}\label{star_degree}
        % S_n\geq \max\Big\{\lambda^{-2}64 \e^2,\;\frac{7}{c_1} \lambda^{-2} \log ({1}/{\lambda}) D_n\Big\}\,.
        % \end{equation}
        Then there exists a  constant $c > 0$ such that
                \begin{equation*}
                    \lim_{n\to\infty}\P \left(\tau_{{G}_n} \geq \e^{c(\lambda^2 S_n + |J_n|)} \right) = 1\,.%, \hspace{0.6cm} %\text{ as } n \longrightarrow \infty
                \end{equation*}
        where $\tau_{G_n}$ denotes the extinction time of the contact process on $G_n$, starting from full occupancy and with rate of infection $\lambda$.
    \end{prop}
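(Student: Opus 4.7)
The plan is to follow the three-step scheme underlying \cite[Theorem 1.3]{mountford_exponential_2016}, carrying the $n$-dependence of $S_n$ and $D_n$ through explicitly. The proof splits naturally into a single-star persistence estimate, a star-to-star transmission estimate, and a coarse-graining step in which the induced dynamics on the distinguished-vertex tree $G'$ dominates a supercritical percolation and hence survives for a time exponential in $|J_n|$.

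\textbf{Star persistence and star-to-star transmission.} First I would establish that for each $x\in J_n$, which has degree at least $S_n/2$ by (P2), the contact process restricted to $\{x\}\cup N_{G_n}(x)$, whenever non-empty, remains non-empty for an additional time of order $\exp(c_0\lambda^2 S_n)$ with probability bounded below by a positive absolute constant. This is the classical ``star lemma'' of \cite{berger2005spread}: comparing the restricted dynamics with a two-state ``ping-pong'' chain between the states ``center infected'' and ``many leaves infected'' gives a stationary mass on the empty state that decays like $\exp(-c_0\lambda^2 S_n)$. Next, using (P1) and (P3), any two stars $x\overset{*}{\sim}y$ are joined by a unique self-avoiding path $\pi_{x,y}$ of length at most $D_n$ in $G_n$, and by the standard segment-crossing estimate, in a time window of length $D_n$ the infection travels from $x$ to $y$ along $\pi_{x,y}$ with probability at least $q_n:=(c_1\lambda)^{D_n}$ for some absolute constant $c_1>0$.

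\textbf{Coarse-grained dynamics on $G'$.} I would then split time into epochs of length $\Delta t_n := \exp(c_0\lambda^2 S_n/3)$ and, using the independence of disjoint time strips of the graphical representation, extract $\lfloor \Delta t_n/D_n\rfloor$ essentially independent transmission attempts per epoch between adjacent stars. The hypothesis \eqref{star_degree} reads $\lambda^2 S_n\geq C\log(1/\lambda)D_n$, from which
\[
    q_n\,\frac{\Delta t_n}{D_n}\;\geq\;\frac{1}{D_n}\exp\!\Bigl(D_n\bigl[(\tfrac{c_0 C}{3}-1)\log(1/\lambda)+\log c_1\bigr]\Bigr)\longrightarrow\infty
\]
uniformly in $\lambda>0$, provided $C$ is chosen large enough. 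Consequently the per-epoch probability of transmitting the infection from one active star to each of its neighbours in $G'$ can be made arbitrarily close to $1$. Combined with the persistence estimate, the infection observed at epoch times and restricted to $J_n$ dominates a (site-and-bond) Bernoulli percolation on the tree $G'$ with parameters as close to $1$ as we wish; since $G'$ has maximal degree at most $\Delta<\infty$ by (P4), this dominating percolation is supercritical.

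\textbf{Conclusion and main obstacle.} A standard block argument for a supercritical percolation-type dynamics on a connected finite tree of bounded degree and size $|J_n|$ then yields survival for at least $\exp(c_3|J_n|)$ epochs with probability tending to $1$. Multiplying by $\Delta t_n$ produces the announced bound $\exp(c(\lambda^2 S_n+|J_n|))$ with $c=\min(c_0/3,c_3)$. The principal obstacle, identical in nature to the one in \cite{mountford_exponential_2016}, is the rigorous coarse-graining itself: the persistence and transmission events at neighbouring stars share vertices and edges of $G_n$, so the dominating percolation is not a genuine product and care is needed. This is handled, as in that paper, by a restart argument on disjoint strips of the graphical construction, which inflates constants but is absorbed into $c_0$; extending to growing $S_n$ and $D_n$ reduces to making all of these bounds uniform in $n$, which is precisely what the quantitative condition \eqref{star_degree} enables.
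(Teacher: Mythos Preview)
Your proposal is correct and follows essentially the same route as the paper. Both adapt \cite[Theorem 1.3]{mountford_exponential_2016} by carrying the $n$-dependence of $S_n$ and $D_n$ through the star-persistence and star-to-star transmission estimates (the paper quotes \cite[Lemmas 3.1--3.2]{mountford2013metastable} and the ``infested'' formulation of \cite[Lemma 6.2]{mountford_exponential_2016}), then coarse-grain onto the bounded-degree tree $G_n'$ and invoke the discrete-time survival result \cite[Proposition 5.2]{mountford_exponential_2016}; the dependence issue you flag is exactly what is resolved there via \cite[Lemma 6.3]{mountford_exponential_2016}.
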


\begin{proof}
Let $G_n'=(V_n',E_n')$ be obtained from $G_n$ and $J_n$ as in Property 4 of Definition \ref{constellation}. Denote by $(\xi_t)_{t \geq 0}$ the contact process on $G_n$ starting from full occupancy. The idea is to couple $(\xi_t)_{t \geq 0}$ with another discrete-time analogue of the contact process $(\eta_r)_{r \in \mathbb{N}}$ on $G_n'$ (see \cite[Section 5]{mountford_exponential_2016}) such that for every $r \in \mathbb{N},$
\begin{equation}\label{eq:comparaison_discrete_continious}
        \P \left( \eta_{r} \neq \emptyset \right) \leq \P \left( \xi_{\kappa r} \neq \emptyset \right)\,,
    \end{equation}
where $\kappa = \exp(c_1\lambda^2 S_n)$, with $c_1$ large enough, is to be thought of as a timescale.

Before constructing $(\eta_r)_{r \in \mathbb{N}}$ we need an auxiliary result, corresponding to \cite[Lemma 6.2]{mountford_exponential_2016}. For $x\in J_n$, let
\begin{align}
    \mathcal N(x)
        &=\{y\in  V_n,\ \text{dist}(x,y)\leq 1\}\\
    \mathcal N'(x)
        &=\{x\}\cup\{y\in  J_n,\ (x, y) \in E_n'\}\\
    \Lambda(x)
        &=\mathcal N(x)\cup\Big(\bigcup_{y\in \mathcal N'(x), y\neq x} \mathcal N(y)\cup b(x,y)\Big)
\end{align}
where, for $x,y\in J_n$ such that $(x, y) \in E_n'$, we denote by $b(x,y)$ the set of vertices of $G_n$ in the unique self-avoiding
path from $x$ to $y$. Furthermore, for a given realization of the contact process and $x\in V_n'$, $r\in\{0,1,\dots\}$, we define the auxiliary process $(\Gamma [x, r]_t)_{r\kappa\leq t\leq (r+1)\kappa}$ defined on $\{0,1\}^{\Lambda(x)}$ as follows. For $r\kappa\leq t\leq (r+1)\kappa$ and $y\in\Lambda(x)$ we put
\begin{align}
    \Gamma [x, r]_t(y)=\ind{\{z\in\mathcal N(x),\ \xi_{r\kappa}(z)=1,\,(z,\kappa r)\longleftrightarrow (y,t) \text{ inside }\Lambda(x)\}}
\end{align}
where ``$(z,\kappa r)\longleftrightarrow (y,t) \text{ inside }\Lambda(x)$'' indicates that there is an infection path from $z$ at time $\kappa r$ to $y$ at time $t$ only involving vertices of $\Lambda(x)$ in the given realization of the contact process. An infection path is meant in the sense of the classical graphical construction for the contact process, see the beginning of \cite[Section 2]{mountford_exponential_2016} for the precise definition.
Given a set of vertices $U$ and $\omega \in \{0,1\}^U$, we say that $U$ is \textit{infested} in $\omega$ if $\vert \{x \in U,\ \omega(x) = 1 \}\vert \geq \tfrac{\lambda}{16 e} | U | $.

\begin{lem}\label{lem:lem6.2.Mountford}
    Consider any $x\in V_n'$. If $\lambda$ is sufficiently small, the following holds. For any $\varepsilon>0$, there exists $n_0=n_0(\varepsilon,\lambda)$ such that, for every $n \geq n_0$,
        \begin{equation}\label{ragno}
            \P \left( \forall y \in \mathcal{N}'(x), \,\mathcal{N}(y) \text{ is infested in } \Gamma[x,r]_{(r+1)\kappa} \;\Big\vert\; \mathcal{N}(x) \text{ infested in } \Gamma[x,r]_{r\kappa} \right) > 1-\varepsilon.
        \end{equation}
\end{lem}

\begin{proof}
    As in the proof of \cite[Lemma 6.2]{mountford_exponential_2016}, we rely on Lemma 3.1 and Lemma 3.2 from \cite{mountford2013metastable}. Lemma 3.1 provides a lower bound for $S_n$ to allow the infection to persist on a star with such degree for at least the duration of the chosen time step. Lemma 3.2 establishes conditions on the distance $D_n$ between stars and their sizes to ensure that the infection can spread effectively from one star to another.

    More specifically, a direct application of \cite[Lemma 3.1 (\textit{ii.})]{mountford2013metastable}, which is possible thanks to \eqref{star_degree}, would yield
    \begin{multline*}
        \P \left(\mathcal{N}(x) \text{ has at least one infected vertex in } \Gamma [x, r]_{(r+1) \kappa}\; \Big|\; \mathcal{N}(x) \text{ infested in } \Gamma [x, r]_{r \kappa}\right) \\ \geq 1 - \exp(-c_1 \lambda^{2} S_n)\,.
    \end{multline*}
    We need more than that. A careful analysis of the proof of \cite[Lemma 3.1 (\textit{ii.})]{mountford2013metastable} shows that there exists a constant $c > 0$ such that
        \begin{equation}\label{eq:lemma3.1}
            \P \left(\mathcal{N}(x) \text{ is infested in } \Gamma [x, r]_{(r+1) \kappa} \;\Big|\; \mathcal{N}(x) \text{ is infested in } \Gamma [x, r]_{r \kappa}\right) \geq 1 - \exp (-c \lambda^{2} S_n)\,.
        \end{equation}
    This comes from the control of the probability of the event $A_{4,j}$ with $j=\kappa$ therein, which is already dealt with in the proof. We skip the details, cf.~also \cite[Equation (6.3)]{mountford_exponential_2016}.

    Now take any $y\in\mathcal N'(x)$. We can use, again thanks to \eqref{star_degree}, \cite[Lemma 3.2]{mountford2013metastable} to show that, for $\lambda$ small enough,
    \begin{equation}\label{eq:lemma3.2}
        \P \left( \exists t \in(r, r+1],\ \mathcal{N}(y) \text{ is infested in } \xi_{t\kappa}  \;\Big|\; \mathcal{N}(x) \text{ infested in } \Gamma [x, r]_{r \kappa}  \right) \geq 1 - \exp (-c_1 \lambda^{2} S_n ).
    \end{equation}
    On the event that $\mathcal{N}(y) \text{ is infested in } \xi_{t\kappa}'$ for some $t \in(r, r+1]$, we further have, by reasoning as for \eqref{eq:lemma3.1}, that $\mathcal{N}(y) \text{ is infested in } \xi_{(r+1)\kappa}$ with probability $1 - \exp \{-\tilde c \lambda^{2} S_n \}$ for some $\tilde c>0$.
    Combining this fact together with (\ref{eq:lemma3.1}) % and (\ref{eq:lemma3.2}), 
    we obtain that there exists a constant $c > 0$ such that
    \begin{equation*}
        \P \left( \mathcal{N}(y) \text{ is infested in } \Gamma[x,r]_{(r+1)\kappa} \;\Big\vert\; \mathcal{N}(x) \text{ infested in } \Gamma[x,r]_{r\kappa} \right) \geq 1 - \exp (-c \lambda^{2} S_n).
    \end{equation*}
    Applying a union bound allows us to conclude the proof of the lemma. 
\end{proof}
We proceed now to the construction of the process $(\eta_r)_{r\in\mathbb N}$.
We introduce the Bernoulli random variables 
    \begin{equation*}
        \{I_{(x,y)}^r ,\ x, y \in V_n' ,\, y \in \mathcal{N}'(x) ,\, r = 0, 1, \dots\}
    \end{equation*}
such that $I_{(x,y)}^r = 1$ if and only if one of the following conditions is satisfied
    \begin{enumerate}
        \item $\mathcal{N}(x)$ is infested in $\xi_{r\kappa }$ and $\mathcal{N}(y)$ is infested in $\Gamma [x, t]_{(r+1)\kappa}$. 
        \item $\mathcal{N}(x)$ is not infested at $\xi_{r\kappa }$. 
    \end{enumerate}
The second condition only ensures that the parameter of the random variables is sufficiently close to $1$ and will be of no particular use. These random variables are not independent, but \cite[Lemma 6.3]{mountford_exponential_2016} guarantees that they stochastically dominate independent Bernoulli random variables of parameter $p>0$:
\begin{lem}
For any $p\in(0,1)$, there exists an $n_0$ such that for all $n\geq n_0$ the following holds. For any $r\in\{0,1,2,\dots\}$ and any $A\in\{0,1\}^{V_n'}$ the random variables $\{I_{(x,y)}^r ,\ x, y \in V_n' ,\, y \in \mathcal{N}'(x)\}$ stochastically dominate $\{\tilde{I}_{(x,y)}^r ,\ x, y \in V_n' ,\, y \in \mathcal{N}'(x)\}$, which are i.i.d.~Bernoulli random variables of parameter $p$.
\end{lem}
The proof of \cite[Lemma 6.3]{mountford_exponential_2016} can be directly adapted to our setting, since it relies exclusively on the fact that $\varepsilon$ in \eqref{ragno} can be taken arbitrarily small (making $n$ grow) and the fact that $G_n'$ has degree uniformly bounded by $\Delta$. 

The process $(\eta_r)_{r=0,1,\dots}$ on $\{0,1\}^{V_n'}$ is now defined as 
$$
\eta_{r+1}(x)=\ind{\{\exists y\in V_n' \text{ with }x\overset{*}{\sim} y,\ \eta_r(y)=1 \text{ and }\tilde{I}_{(y,x)}^r =1\}},\quad  x\in V_n'\,.
$$
Now it is clear by the construction 
% of $(\eta_r)_{r=0,1,2,\dots}$ 
that, since we start with $\xi_0$ equal to $1$ everywhere, if $\eta_r(x)=1$ for some $x$, then $x$ is infested in $\xi_{\kappa r}$. Equation \eqref{eq:comparaison_discrete_continious} then follows.

In order to conclude the proof of Proposition \ref{prop:mountford_strategy}, we make use of \cite[Proposition  5.2]{mountford_exponential_2016} which states that there exists a $c>0$ such that
\begin{equation}\label{poly}
    \lim_{t\to\infty}\P \left( \tau_{\eta} \geq \e^{c|J_n|} \right) =1
\end{equation}
since $(\eta_r)_{r=0,1,2\dots}$ evolves on $T_n'$ which is a tree with 
$|V_n'|=|J_n|$ nodes and degree bounded by $\Delta$, where $\tau_{\eta}$ is the 
extinction time of $(\eta_r)_{r \in \mathbb{N}}$ started from full occupancy. By 
\eqref{eq:comparaison_discrete_continious} and \eqref{poly} and recalling that each 
step of $(\eta_r)_{r \in \mathbb{N}}$ takes a time $\kappa=\exp(c_1\lambda^2S_n)$ for 
$(\xi_t)_{t\geq 0}$ we conclude the proof. \end{proof}

%-----------------------------------------------------------------------------------%
%-----------------------------------------------------------------------------------%
    \section{Proof of Theorem \ref{main_theorem}, part \ref{(ii)}: extinction time for 
    \texorpdfstring{$\gamma > 2$}{gamma >2}}\label{sec:extinction_time_gamma2}
%-----------------------------------------------------------------------------------%
%-----------------------------------------------------------------------------------%

%-----------------------------------------------------------------------------------%
    \subsection{Strategy of the proof}\label{subsec:proof_extinction_gamma2}    
%-----------------------------------------------------------------------------------%

In this section, let %$\alpha\in(d,2d)$, $\rho>\rho_c(d)$ and $\tau\ge 1$ such that 
%$\gamma=\alpha(\tau-1)/d>2$. For all $n\ge 1$, let 
$\cg_n = (V_n, E_n)$ be an SFP random graph  restricted to $[0,n^{1/d})^d$ with parameters $\alpha\in(d,2d)$, $\rho>\rho_c(d)$ and $\tau\ge 1$. Let $A$ be a constant such that \(A>2\gamma/(2-\alpha/d)\). The following proposition is the core of the proof of \ref{(ii)} of Theorem \ref{main_theorem}.

\begin{prop}\label{prop:subgraph_gamma2}
    There exist constants $c_1,c_2,c_3>0$ depending only on $\rho$ such that the following holds. For some $\nu_s>\nu_p>0$ depending only on $A$, $\alpha$ and $\tau$, we can find, with probability going to $1$ as 
$n\to\infty$, a subset $J_n$ of $V_n$ such that
\begin{enumerate}
    \item\label{property1} $|J_n| \geq c_1n(\log n)^{-A}$;
    \item For every $x \in J_n$, $\deg(x) \geq  c_2 
    (\log n)^{\nu_s}$; 
    \item\label{property3} There is an ordering $x_1,x_2,\dots,x_{|J_n|}$ of the points in $J_n$ and there are simple paths $\pi_{i,i+1}$ in $\cg_n$ connecting $x_i$ with $x_{i+1}$  for all $i=1,\dots,|J_n|-1$, such that
    \begin{enumerate}
        \item[3.1] $|\pi_{i,i+1}|\leq c_3 (\log n)^{\nu_p}$\,,
        % $dist_{\cg_n}(x_i,x_{i+1}) \leq c_3 (\log n)^{\nu_p}$\,,
        \item[3.2]\label{paths}  
        All paths are disjoint (except for their endpoints),
        %$\pi_{i,i+1}$ doesn't intersect $\pi_{j,j+1}$ for any $j\neq i-1,i,i+1\,$. 
    \end{enumerate} 
    where for a path $\pi$, we denoted its length by $|\pi|$.
    %\item\label{property4} \red{\sout{For every $x, y \in J_n$, $d_{\cg_n}(x,y) < c_3 
    %\log(n)^{\nu_p}$,}} 
\end{enumerate}
\end{prop}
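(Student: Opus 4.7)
The plan is to construct $J_n$ by a multi-scale renormalisation argument, which I would carry out in several steps. First, I tile the box $[0,n^{1/d})^d$ with sub-boxes of side length $(\log n)^{A/d}$, producing approximately $n(\log n)^{-A}$ cells, each containing a Poisson-distributed number of vertices with mean $(\log n)^A$. In each cell $B_i$ I declare a vertex $x\in B_i$ to be \emph{red} if its weight satisfies $W_x\geq (\log n)^\eta$, where $\eta>0$ is a parameter tuned so that $\eta(\tau-1)<A$; the Pareto tail (\ref{eq:weights_definition}) and the Poisson intensity inside a cell then give that a fixed cell contains at least one red vertex with probability $1-\exp(-c(\log n)^{A-\eta(\tau-1)})$, which tends to $1$ as $n\to\infty$. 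A Bernstein-type concentration argument for the number of good cells then produces, with probability going to $1$, at least $c_1 n(\log n)^{-A}$ cells each containing a red point to be included in $J_n$. The constraint $A>2\gamma/(2-\alpha/d)$ should enter here through the combination of the lower bounds on $\eta$ needed below for the degree and chemical-distance estimates.

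For item 2, I must check that a red point $x$ has degree at least $c_2(\log n)^{\nu_s}$ with large probability. By the connection probability (\ref{eq:SFP base}) and a standard scaling argument, the expected degree of a vertex conditioned on its weight $w$ is of order $w^{d/\alpha}$ (using $\alpha>d$ to make the integral converge); in particular, if $W_x\geq (\log n)^\eta$ then $\E[\deg(x)\mid W_x]\gtrsim (\log n)^{\eta d/\alpha}$. A Poisson concentration bound (conditionally on the weights, edges are independent) upgrades this to a high-probability lower bound $\deg(x)\geq c_2(\log n)^{\eta d/\alpha}$, uniformly in the red points, so one may set $\nu_s=\eta d/\alpha$.

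The main obstacle lies in item 3: one must order $J_n$ as $x_1,\dots,x_{|J_n|}$ and connect consecutive $x_i$ to $x_{i+1}$ by simple disjoint paths of length at most $c_3(\log n)^{\nu_p}$. For the length bound, I would rely on existing chemical-distance results for SFP in the small-world regime $\alpha\in(d,2d)$ (see \cite{deprez2015inhomogeneous,hao2023graph,lakis2024improved}): two vertices at Euclidean distance $r$ in the infinite cluster are typically connected by a graph path of length $(\log r)^{\Delta}$ for some $\Delta=\Delta(\alpha/d)$. Applied to nearest good cells at Euclidean distance $O((\log n)^{A/d})$, and exploiting the large weight of the red endpoints to facilitate the connection, this yields $|\pi_{i,i+1}|\leq c_3(\log n)^{\nu_p}$ for a suitable $\nu_p$; choosing $\eta$ large enough guarantees $\nu_s>\nu_p$, as required later by Proposition \ref{prop:mountford_strategy}. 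The disjointness condition 3.2 is the delicate point: I would order the good cells along a snake-like sweep of the cell grid and construct the paths greedily, at each step forbidding the interior vertices of all paths built so far. Since each path has length at most $(\log n)^{\nu_p}$ and there are at most $c_1 n(\log n)^{-A}$ paths, the total forbidden vertex set has size $O(n(\log n)^{\nu_p-A})=o(n)$, a vanishing fraction of the graph. A percolation-type robustness argument, combined with the FKG inequality (\ref{eq:IneqFKG}) applied locally around each pair of consecutive red points, should then ensure that the required path still exists in the subgraph obtained by removing the forbidden vertices. Making this robustness argument quantitative, and controlling the interaction between the random paths and the random set of red points, is the main technical challenge.
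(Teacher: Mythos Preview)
Your coarse partition into cells of volume $(\log n)^A$, the weight threshold $(\log n)^\eta$ with $\eta(\tau-1)<A$, and the degree lower bound via $\nu_s=\eta d/\alpha$ are all essentially the same as in the paper. The substantial divergence, and the genuine gap, is in item~3.

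For the path length you invoke off-the-shelf chemical-distance results. Those are typically stated for two \emph{typical} points in the infinite cluster; to use them here you would need a version that (a) holds with error probability $o((\log n)^A/n)$ so that a union bound over all $m_c$ pairs succeeds, (b) applies to the graph restricted to $[0,n^{1/d})^d$ rather than the full graph, and (c) gives paths lying in a prescribed spatial region. None of the references you cite provide this. The paper does not use such results at all: it builds the paths by hand via a $(s+1)$-level dyadic-type refinement $B_i\supset B_i^{i_1}\supset\cdots\supset B_i^{i_1\cdots i_s}$ of each coarse cell (in the spirit of Coppersmith--Gamarnik--Sviridenko), shows that the giant components of the finest sub-boxes are pairwise linked by single edges at each scale, and concatenates. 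This yields paths of length $O((\log n)^{\nu_p})$ that, crucially, stay inside $B_i\cup B_{i+1}$.

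The disjointness argument is where your proposal actually breaks. Your global count ``total forbidden set is $o(n)$'' is irrelevant: the path $\pi_{i-1,i}$ lives in the same region $B_{i-1}\cup B_i$ that overlaps $B_i\cup B_{i+1}$, so locally near $x_i$ a constant fraction of the useful vertices may already be forbidden when you try to build $\pi_{i,i+1}$. A vague ``percolation-type robustness plus FKG'' does not resolve this, because the forbidden set is random and negatively correlated with the event you want. The paper's device is clean and avoids any greedy argument: colour the finest sub-boxes red/blue in a chessboard pattern, show that for every $i$ there is both a red path and a blue path from $\hat x_i$ to $\hat x_{i+1}$ inside $B_i\cup B_{i+1}$, and then alternate colours along the snake ordering. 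Consecutive paths are then vertex-disjoint because they use different colours in the overlapping box $B_i$, and non-consecutive paths are disjoint because they live in disjoint pairs of coarse boxes. This colouring trick is the missing idea in your sketch.
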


\begin{proof}[of \ref{(ii)} Theorem \ref{main_theorem}]
    Proposition \ref{prop:subgraph_gamma2} ensures the existence, with probability going to $1$ as $n\to\infty$, of a subgraph $G_n$ of $\cg_n$ that is a $(c_2 (\log n)^{\nu_s},c_3 (\log n)^{\nu_p},2)$-constellation (see Definition \ref{constellation}) with a set of distinguished vertices of size $|J_n|\geq c_1n(\log n)^{-A}$. We point out that the third condition of Proposition \ref{prop:subgraph_gamma2} is needed to  ensure \ref{degree3} of Definition \ref{constellation}.
    
    The remainder of the proof is a straightforward consequence of Proposition  
    \ref{prop:mountford_strategy}. Indeed, for each fixed $\lambda>0$ there exists 
    $n_0$ such that condition \eqref{star_degree} is fulfilled for all $n\geq n_0$.  
    Proposition  \ref{prop:mountford_strategy} then guarantees that there exists 
    $c>0$ such that
    \begin{equation*}
        \lim_{n\to\infty}\P \left(\tau_{{G}_n} \geq \e^{cn(\log n)^{-A}} \right) = 1\,.%, \hspace{0.6cm} %\text{ as } n \longrightarrow \infty
    \end{equation*}
    Since the $G_n$ are subgraphs of $\cg_n$, the same must hold for 
    $\tau_{\cg_n}$.
\end{proof}

The next sections are devoted to the proof of Proposition \ref{prop:subgraph_gamma2}, 
% show that a set $J_n$ with these properties can be found in $\cg_n$ with  probability tending to $1$, 
which will conclude the proof of Theorem \ref{main_theorem} part \ref{(ii)}.

\subsection{A meticulous partition of the box}\label{partition}

To identify the set \(J_n\) of Proposition \ref{prop:subgraph_gamma2}, we begin by defining two subdivisions of the box \([0,n^{1/d})^d\): one coarse and one fine. The \emph{coarse} subdivision is defined as follows. Let 
\[
    m_c = \left\lfloor \frac{n^{1/d}}{(\log n)^{A/d}}\right\rfloor^d.
\]
For every \(\mathtt{v}=(v_1, v_2, \dots, v_d )\in \{0,1,\dots,m_c^{1/d}-1\}^d\) denote by $B_{\mathtt{v}}$ the box in $\R^d$ given by
\begin{equation} \label{eq:SubdivisionGrossiere}
    B_{\mathtt{v}} = \bigtimes_{j=1}^{d} [v_j (\log n)^{A/d} , (v_j + 1) (\log n)^{A/d}).
\end{equation}
To ease the notation, we will label the boxes with indices in $I_c=\{1,2\dots,m_c\}$ in any arbitrary way such that $B_1 = B_{(0,\dots,0)}$ and such that $B_i$ and $B_{i+1}$ are neighbours for all $i\in I_c$. Later on, we will try to find a point of $J_n$ in each of these coarse boxes. Note that the $(B_i)_{i\in I_c}$ form a partition of the slightly smaller cube $[0,m_c^{1/d}(\log n)^{A/d})^d\subset [0,n^{1/d})^d$, but this will be sufficient to construct $J_n$ with the required properties.

We also define a \emph{fine} subdivision of \([0,n^{1/d})^d\), in the spirit of \cite[Section 5]{coppersmith2002diameter}. Let \(\theta\) be such that 
\[
    \max\left(\frac{2}{3},\frac{\alpha}{2d}+\frac{\gamma}{A}\right) < \theta < 1\,,
\]
which is always possible since \(A>2\gamma/(2-\alpha/d)\). Then, for each \(i\in I_c\), divide the cube \(B_i\) into \(m_1 = \lfloor (\log n)^{(1-\theta)A/d}\rfloor^d\) disjoint subcubes of side \((\log n)^{\theta A /d}\) in a way similar  to \eqref{eq:SubdivisionGrossiere}. Again, these subcubes might not cover $B_i$ entirely, but will be sufficient for our purpose. Each one of these cubes will be subdivided further into \(m_2 = \lfloor (\log n )^{(\theta-\theta^2)A/d} \rfloor^d\) subcubes of side \((\log n)^{\theta^2 A/d}\) in the same way. These subdivisions will be repeated 
\[
    s = \left\lfloor \frac{\log (1/2\gamma)}{\log \theta} \right\rfloor
\] 
times, yielding a fine subdivision of \(B_i\) into cubes of volume \((\log n)^{\theta^sA}\). We set \(\nu_p=\theta^s A\) and note that, by definition of \(s\), we have 
\[
    \frac{A}{2\gamma} \le \nu_p  \le \frac{A}{2\gamma\theta}\,.
\]
To formalize the notation, for \(k \in \{1,\dots, s\}\), let \(I_k\) be the set of indices \(\{1, \dots, m_k\}\), with 
\[
    m_k = \left\lfloor (\log n)^{(\theta^{k-1}-\theta^k)A/d} \right\rfloor^d. 
\]
For every \(i \in I_c\) and \(i_1\in I_1,\dots, i_{k-1} \in I_{k-1}\) we define the cubes recursively as follows: \(( B_i^{i_1\dots i_k})_{i_k\in I_k}\) are the disjoint subcubes of volume \((\log n)^{\theta^k A}\), arbitrarily indexed by \(i_k\), that are contained in \(B_i^{i_1\dots i_{k-1}}\) if \(k\ge 2\) or in \(B_i\) if \(k=1\). With this construction, the finest subdivision, containing exclusively disjoint subcubes with volume $(\log n)^{\theta^s A}=(\log n)^{\nu_p}$, has cardinality  
\[
    m_f = m_c\prod_{k=1}^{s} m_k = \left\lfloor \frac{n^{1/d}}{(\log n)^{A/d}} 
    \right\rfloor^d \prod_{k=1}^{s} 
    \lfloor (\log n )^{A (\theta^{k-1}-\theta^k)/d} \rfloor^d.
\]
We have the upper bound
\begin{equation}\label{eq:BorneSupM}
    m_f\le \frac{n}{(\log n)^A}\prod_{k=1}^s (\log n)^{A(\theta^{k-1}-\theta^k)} =  
    \frac{n}{(\log n)^{\nu_p}} \le \frac{n}{(\log n)^{A/2\gamma}}
\end{equation}
as well as the lower bound 
\begin{align}
    m_f & \ge \frac{n}{(\log n)^{\nu_p}}
    \left(1-d\frac{(\log n)^{A/d}}{n^{1/d}}\right) 
    \prod_{k=1}^s\left(1-d(\log n)^{(\theta^k-\theta^{k-1})A/d}\right)\nonumber\\
    & = \frac{n}{(\log n)^{\nu_p}} \left(1-O\left(
    \frac{(\log n)^{A/d}}{n^{1/d}}\right)\right). \label{eq:BorneInfM}
\end{align}
Let then $c_f>0$ be such that $m_f \ge c_f n(\log n)^{-\nu_p}$ for all $n$. For simplicity, we set \(I_f = \{1, \dots, m_f\}\) and denote by $B_i^f$ for $i \in I_f$ all the subcubes in the fine subdivision, numbered arbitrarily. For every $i \in I_c$, we denote by $K_i \subset I_f$ the set of indices such that $k \in K_i$ if and only if $B_k^f \subset B_i$. In the same manner, for every set of indices $i \in I_c,i_1\in I_1,\dots, i_k \in I_k$ with $1\le k \le s$, we denote 
$K_i^{i_1\dots i_k} \subset I_f$ the set of indices such that \(k \in K_i^{i_1\dots i_k}\) if and only if \(B_k^f \subset B_i^{i_1\dots i_k}\). 

\subsection{Identification of the set \texorpdfstring{$J_n$}{Jn}}\label{identification}

For every $i \in I_c$, denote by $\hx_i$ the vertex with maximal weight in $B_i$. We define the following events:
\begin{equation}\label{E_etoile}
    E_\circledast = \{ \forall i \in I_c,\ \hx_i 
    \text{ has at least } c_2 (\log n)^{\nu_s} 
    \text{ neighbours in } B_i \},
\end{equation}
\begin{equation}\label{E_doublefleche}
    E_\leftrightarrow = \begin{Bmatrix*}[l]
        \forall i \in I_c,\ \hx_i \text{ and } 
        \hx_{i+1} \text{ are connected by a path of length}\\
        \text{at most } c_3 
        (\log n)^{\nu_p} \text{ and all these paths are disjoint} \end{Bmatrix*},
\end{equation}  
where $c_2, c_3 > 0$ are constants to be fixed later on, and \(\nu_s\) is a constant such that 
\begin{equation}\label{nu_s}
    \nu_p < \nu_s < \frac{A-1}{\gamma},
\end{equation}
which is possible since \(\nu_p\le A/(2\gamma\theta)<(A-1)/\gamma\) with our choices of \(A\) and \(\theta\). If we show that 
\begin{equation}\label{torrefazione}
\lim_{n\to\infty} \P(E_\circledast\cap E_\leftrightarrow)=1\,
\end{equation} 
then we can choose $J_n=\{\hx_i\}_{i\in I_c}$ and properties \ref{property1}--\ref{property3} of Proposition \ref{prop:subgraph_gamma2} %\ref{subsec:proof_extinction_gamma2} 
will be satisfied with probability going to $1$ as $n\to\infty$. This will conclude the proof of Theorem \ref{main_theorem}, part \ref{(ii)}. The rest of the section is therefore dedicated to the proof of \eqref{torrefazione}.

\smallskip

Given a realisation of $\cg_n$ on $[0,n^{1/d})^d$, we can consider the subgraph given by its restriction to the vertices in $B_i^f$, for each $i \in I_f$. We call $\cc_i^f$ the largest connected component of this subgraph and note
\begin{equation}\label{def:Ci}
    \cc_i = \bigcup_{j \in K_i} \cc_j^f\,.
\end{equation}
For every $i \in I_c,i_1\in I_1,\dots, i_k \in I_k$ we define
\begin{equation}\label{def:Ci_i...}
    \cc_i^{i_1\dots i_k} = \bigcup_{j \in K_i^{i_1\dots i_k}} \cc_j^f.
\end{equation}
Notice that the subgraphs $\cc_i$ and $\cc_i^{i_1\dots i_k}$ do not form a connected component themselves. However, we will later show that they are in fact connected in $\cg_n$ with high probability. 

To prove \eqref{torrefazione}, we define two auxiliary events. Fix some 
\[
    \eta \in \Big(\frac{\alpha\nu_s}{d}, \frac{A-1}{\tau-1}\Big)\,,
\]
which is possible thanks to \eqref{nu_s}. Let
% Notice that on 
% \(E_\circledast\cap E_\leftrightarrow\), the existence of a sub-graph of 
% the form described in Proposition \ref{prop:mountford_strategy} can be guaranteed. 
% Our goal is then to prove that $E_\circledast \cap E_\leftrightarrow$ takes
% place with high probability. 
\begin{align*}
    E_1 & = \left\{  \forall i \in I_f,\ \beta_1 (\log n)^{\nu_p} < |\cc_i^f | < \beta_2 (\log n)^{\nu_p}  \right\}, \\
    E_2 & = \left\{ \forall i \in I_c,\   W_{\hx_i} > (\log n)^\eta\, \text{ and }\, \hx_i \in \cc_{f(i)}^f\right\},
\end{align*}
where $f(i) \in K_i$ indicates the index such that $\hx_i \in B_{f(i)}^f$. 
%The first event controls the size of the largest component in each fine box, whereas the second ensures that the point with the largest weight in each fine box is sufficiently large and that this point belongs to the connected component of its own box. 
The constants $\beta_1, \beta_2 > 0$ will be chosen later. Now, since 
\begin{align*}
    \P (E_{\leftrightarrow} \cap E_{\circledast}) 
    &  \geq \P(E_{\leftrightarrow} \cap E_{\circledast} | E_1 \cap E_2)  \P (E_1 \cap  E_2) \\
    & \ge \left(1-  \P\left(\overline{E}_{\leftrightarrow}  | E_1 \cap E_2 \right)- \P\left( \overline{E}_{\circledast} | E_1 \cap E_2\right)\right) \P(E_2|E_1) \P(E_1),
\end{align*}
equation \eqref{torrefazione} is proved if we show that \(\P(E_{\leftrightarrow}| E_1 \cap E_2)\), \(\P( E_{\circledast} | E_1 \cap E_2))\), \(\P(E_2|E_1)\) and \(\P(E_1)\) all converge to 1 as \(n\to\infty\). The following four lemmas, the proof of which is deferred to the next subsections, conclude therefore the proof of Theorem \ref{main_theorem}, part \ref{(ii)}. 

\begin{lem}\label{Lemma:E_1}
    There exist $\beta_1, \beta_2 > 0$ such that 
    \[
    \lim_{n\to \infty}\P(E_1)=1\,.
    \]
\end{lem}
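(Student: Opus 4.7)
The plan is a union bound over the $m_f\le n/(\log n)^{\nu_p}$ fine boxes, combined with concentration estimates for each one. Because the fine boxes $(B_i^f)_{i\in I_f}$ are pairwise disjoint and the restricted graph $\cg_n\big|_{B_i^f}$ depends only on the Poisson points, weights and edge marks inside $B_i^f$, the events involved are mutually independent across $i\in I_f$ (and identically distributed up to translation). A key preliminary observation is that $\nu_p\ge A/(2\gamma)>1$: indeed, $\alpha>d$ gives $2-\alpha/d<1$, so the assumption $A>2\gamma/(2-\alpha/d)$ forces $A>2\gamma$. Hence the volume $V_n=(\log n)^{\nu_p}$ of each fine box grows strictly faster than $\log n$, which is exactly what is needed for each per-box failure probability to beat the union bound of $m_f\le n$ terms.

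For the upper bound, we use the crude estimate $|\cc_i^f|\le N_i$, where $N_i$ is the number of Poisson points in $B_i^f$, a Poisson random variable with mean $V_n$. A standard Chernoff bound yields $\P(N_i>\beta_2 V_n)\le \exp(-c V_n)$ for any $\beta_2>1$ and some $c=c(\beta_2)>0$; since $V_n$ grows faster than $\log n$, this quantity is smaller than any negative power of $n$. Taking a union bound over the $m_f\le n$ fine boxes then gives $|\cc_i^f|<\beta_2 V_n$ for all $i\in I_f$ with probability tending to $1$, for any choice $\beta_2>1$.

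For the lower bound, we exploit that $\rho>\rho_c$ via a supercritical percolation estimate for SFP restricted to a finite box: there exist $\theta=\theta(\rho)>0$, $\kappa>0$ and $c>0$ such that, for all $R$ large enough,
\begin{equation*}
    \P\bigl(|\text{largest cluster of }\cg\text{ restricted to }[0,R)^d|\le \theta R^d\bigr)\le \exp(-cR^{\kappa}).
\end{equation*}
The usual route to such an estimate is a coarse-graining/renormalization argument at an appropriate scale $L_0$, combined with the FKG inequality \eqref{eq:IneqFKG} to exploit the positive association of increasing events in $\cg$; heuristically, for $\rho$ strictly supercritical, one can tune $L_0$ so that the event ``a renormalized block is good'' has probability arbitrarily close to $1$, and then dominate stochastically by supercritical Bernoulli site percolation on $\Z^d$. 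Applying this with $R=V_n^{1/d}=(\log n)^{\nu_p/d}$ and setting $\beta_1=\theta/2$ yields a per-box failure probability that is stretched-exponentially small in $\log n$, which beats the union bound over the $m_f$ fine boxes provided $\kappa$ is large enough.

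The main obstacle is precisely establishing the above stretched-exponential tail for the largest cluster of SFP restricted to a finite box. For nearest-neighbour Bernoulli percolation this is a classical theorem of Pisztora with decay $\exp(-cR^{d-1})$; for SFP one must adapt the argument to accommodate long-range edges and the Poisson point substrate, and (in low dimensions, especially $d=1$) to work around the fact that short-range connections alone do not percolate. This step is where the hard work lies, and will either be cited from the SFP literature (cf.\ \cite{deprez2019scale}) or carried out via an explicit multiscale renormalization tailored to the present setting.
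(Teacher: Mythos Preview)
Your approach is essentially the same as the paper's: union bound over the $m_f$ fine boxes, Chernoff for the upper tail of the Poisson count, and a supercritical large-cluster estimate for the lower tail. The paper cites \cite[Theorem 3.4]{deprez2019scale} for the latter, which gives precisely the stretched-exponential bound you are after.

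There is, however, a genuine gap in your lower-bound step. You write that the per-box failure probability decays like $\exp(-cR^\kappa)$ and that this ``beats the union bound over the $m_f$ fine boxes provided $\kappa$ is large enough.'' But $\kappa$ is \emph{not} a free parameter: the estimate from \cite[Theorem 3.4]{deprez2019scale} gives, for any $\alpha'\in(\alpha,2d)$, a decay of the form $\exp\bigl(-c\,R^{2d-\alpha'}\bigr)$, so the best achievable exponent is $\kappa<2d-\alpha$. Since $\alpha>d$, this forces $\kappa<d$, and in particular your observation that $\nu_p>1$ is \emph{not} sufficient to conclude: after substituting $R=(\log n)^{\nu_p/d}$, the per-box bound becomes $\exp\bigl(-c(\log n)^{\nu_p(2-\alpha'/d)}\bigr)$, and you need $\nu_p(2-\alpha'/d)>1$ to beat the union bound over $m_f\le n$ terms. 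This is strictly stronger than $\nu_p>1$ and is exactly where the full hypothesis $A>2\gamma/(2-\alpha/d)$ (together with $\alpha<2d$, which ensures the interval $(\alpha,2d)$ is nonempty) is used: from $\nu_p\ge A/(2\gamma)$ one gets $\nu_p(2-\alpha/d)>1$, and then one can pick $\alpha'>\alpha$ close enough to $\alpha$ so that $\nu_p(2-\alpha'/d)>1$ still holds. The paper carries out this verification explicitly; your sketch stops short of it and, as written, leaves the impression that $\kappa$ can be taken as large as needed, which it cannot.
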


\begin{lem}\label{Lemma:E_2}
    % For all \(\eta \in (\alpha\nu_s/d, (A-1)/(\tau-1)\)),
    Taking the $\beta_1$ and $\beta_2$ from Lemma \ref{Lemma:E_1}, it holds
    \[
    \lim_{n\to\infty}\P(E_2\,|\,E_1)=1\,.
    \]
    % \[
    %     \P\left(\forall i \in I_c,\ \widehat{x}_i \in 
    % \cc_{f(i)}^f \text{ and } W_{\widehat{x}_i} > 
    % \log(n)^\eta |E_1\right) \to 1
    % \]
\end{lem}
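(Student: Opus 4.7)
The plan is to bypass conditioning on $E_1$ altogether via the elementary bound
\[
    \P(E_2 \mid E_1) \ge 1 - \frac{\P(\overline{E_2})}{\P(E_1)},
\]
and use $\P(E_1) \to 1$ from Lemma \ref{Lemma:E_1} to reduce the claim to showing $\P(\overline{E_2}) \to 0$. A union bound over $i \in I_c$, split along the weight condition, gives
\[
    \P(\overline{E_2}) \le \sum_{i \in I_c} \P\bigl(W_{\hx_i} \le (\log n)^\eta\bigr) + \sum_{i \in I_c} \P\bigl(W_{\hx_i} > (\log n)^\eta,\ \hx_i \notin \cc_{f(i)}^f\bigr),
\]
which I would bound term by term.

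For the first sum, the number of Poisson points in $B_i$ is Poisson with mean $(\log n)^A$, and conditionally on this number the weights are i.i.d.~Pareto$(\tau-1)$. A direct computation yields that the probability that none of them exceeds $(\log n)^\eta$ equals $\exp\!\bigl(-(\log n)^{A - \eta(\tau-1)}\bigr)$. Since $\eta < (A-1)/(\tau-1)$, the exponent exceeds $1$, so the sum is bounded by $m_c \le n$ times this quantity and therefore vanishes.

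For the second sum, the strategy is to prove the stronger statement that, conditionally on $W_{\hx_i} > (\log n)^\eta$, the vertex $\hx_i$ is directly connected by an edge to more than half of the vertices in $B_{f(i)}^f$. Any such vertex $v$ then lies in the same connected component as $\hx_i$ in the subgraph on $B_{f(i)}^f$, which forces this component to contain more than $N/2$ vertices (where $N$ is the number of Poisson points in $B_{f(i)}^f$), hence to coincide with the unique largest component $\cc_{f(i)}^f$. Quantitatively, every $v \in B_{f(i)}^f$ satisfies $\|\hx_i - v\| \le \sqrt{d}(\log n)^{\nu_p/d}$, so
\[
    \P(\hx_i \not\sim v \mid W_{\hx_i} > (\log n)^\eta) \le \exp\!\bigl(-c(\log n)^{\eta - \alpha\nu_p/d}\bigr) =: \epsilon_n,
\]
and $\epsilon_n \to 0$ because $\eta > \alpha \nu_s/d > \alpha \nu_p/d$. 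Conditioning on $N$, the number of vertices in $B_{f(i)}^f$ not connected to $\hx_i$ is stochastically dominated by $\mathrm{Bin}(N-1, \epsilon_n)$, and a Chernoff bound yields that this exceeds $N/2$ with probability at most $\exp(-cN)$ for $n$ large enough. A Poisson tail bound yields $N \ge (\log n)^{\nu_p}/2$ except with probability $\exp(-c(\log n)^{\nu_p})$, and the sum is therefore bounded by $n\exp(-c(\log n)^{\nu_p})$, which vanishes since the hypothesis $A > 2\gamma/(2-\alpha/d)$ forces $A > 2\gamma$, hence $\nu_p \ge A/(2\gamma) > 1$.

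The one subtlety to keep in mind is that $\cc_{f(i)}^f$ is \emph{not} independent of the edges incident to $\hx_i$, so one cannot naively condition on the rest of the graph to bound the event $\{\hx_i \notin \cc_{f(i)}^f\}$. Upgrading to the stronger ``$\hx_i$ is a majority hub in $B_{f(i)}^f$'' circumvents this, because the latter event depends only on the edges incident to $\hx_i$, which are conditionally independent Bernoullis given positions and weights; I expect this to be the only nontrivial step, the rest being straightforward union bounds and parameter bookkeeping to check that both vanishing rates beat the $m_c \le n$ prefactor.
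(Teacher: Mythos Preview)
Your proof is correct and rests on the same two core arguments as the paper's: a Poisson/Pareto tail for the weight condition, and the ``majority hub'' observation (if $\hx_i$ is adjacent to more than half of the vertices in $B_{f(i)}^f$, it must lie in the largest component) for the membership condition. The difference is organizational. The paper keeps the conditioning on $E_1$ throughout---using it to lower-bound $|\cx\cap B_{f(i)}^f|$ directly---and passes through a chain of reductions $E_1\to E_1^{\text{low}}\to F$ for the weight part; you instead bypass all conditioning via $\P(E_2\mid E_1)\ge 1-\P(\overline{E_2})/\P(E_1)$ and supply the lower bound on $N$ by an independent Poisson tail estimate. Your route is cleaner, and the Poisson-thinning identity $\P(W_{\hx_i}\le(\log n)^\eta)=\exp(-(\log n)^{A-\eta(\tau-1)})$ is in fact sharper and more direct than the paper's conditional bound.

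One small point to tighten: since $f(i)$ is the random fine box containing the max-weight vertex of $B_i$, the variable $N=N_{f(i)}$ is not itself Poisson but size-biased, so your sentence ``a Poisson tail bound yields $N\ge(\log n)^{\nu_p}/2$'' needs a word of justification. The simplest fix is to replace it by the union bound $\P\bigl(\exists j\in K_i:\,|\cx\cap B_j^f|<(\log n)^{\nu_p}/2\bigr)\le |K_i|\exp(-c(\log n)^{\nu_p})$, which summed over $i\in I_c$ gives at most $m_f\exp(-c(\log n)^{\nu_p})\le n\exp(-c(\log n)^{\nu_p})\to 0$ since $\nu_p>1$.
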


\begin{lem}\label{Lemma:E_3} 
Taking the $\beta_1$ and $\beta_2$ from Lemma \ref{Lemma:E_1}, there exists $c_3>0$ such that
    \[
    \lim_{n\to\infty}\P(E_{\leftrightarrow}\,|\,E_1\cap E_2)=1\,.
    \]
\end{lem}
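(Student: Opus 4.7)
The plan is to establish $\P(E_{\leftrightarrow}\mid E_1\cap E_2)\to 1$ by proving a quantitative single-path existence statement and then applying a union bound over the $m_c - 1$ consecutive pairs of coarse boxes.

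\emph{Single-path existence.} For a fixed $i\in\{1,\dots,m_c-1\}$, I would show that, conditionally on $E_1\cap E_2$, the vertices $\hx_i$ and $\hx_{i+1}$ are joined in $\cg_n$ by a simple path of length at most $c_3(\log n)^{\nu_p}$ with probability at least $1-o(1/m_c)$. The key ingredient is that both $W_{\hx_i}$ and $W_{\hx_{i+1}}$ exceed $(\log n)^\eta$ with $\eta>\alpha\nu_s/d$, so these vertices can serve as hubs with many long-range edges. Descending through the hierarchy of Section~\ref{partition}, I would argue level by level that $\cc_i^{i_1\dots i_k}$ is attached to $\cc_i^{i_1\dots i_{k-1}}$ with probability close to $1$, using the Pareto tail \eqref{eq:weights_definition} and the connection formula \eqref{eq:SFP base}; each such attachment adds at most $O((\log n)^{\theta^k A})$ vertices to the path. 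Summing the level-wise contributions and finishing with a traversal of the fine-box component $\cc_{f(i+1)}^f$, whose size is at most $\beta_2(\log n)^{\nu_p}$ by $E_1$, yields a path of the required length.

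\emph{Disjointness and union bound.} I would construct the paths one by one. When building $\pi_{i,i+1}$, the vertex set already used by $\pi_{1,2},\dots,\pi_{i-1,i}$ has cardinality at most $(i-1)c_3(\log n)^{\nu_p}\le n(\log n)^{\nu_p-A}$, and because $\nu_p<A$ this is a vanishing fraction of every relevant fine-box component (each of size $\ge\beta_1(\log n)^{\nu_p}$ under $E_1$). Hence the single-path argument can be repeated while avoiding the forbidden set, at the cost of a multiplicative constant in the failure probability. A union bound then gives $\P(\overline{E}_{\leftrightarrow}\mid E_1\cap E_2)=o(1)$. The conditioning on $E_1\cap E_2$ is handled either via the FKG inequality \eqref{eq:IneqFKG}, since $E_\leftrightarrow$ and the events in $E_1\cap E_2$ are all increasing in the point process (more vertices, larger weights, smaller edge marks), or by revealing in sequence first the weights, then the fine-box components, and finally the bridging edges, which are independent of what has been revealed so far.

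\emph{Main obstacle.} The delicate step is the single-path existence in the polylogarithmic range. One must show that $\hx_i$ and $\hx_{i+1}$, despite being separated by Euclidean distance up to $2(\log n)^{A/d}$, can be joined by a path of length at most $c_3(\log n)^{\nu_p}$ whose non-existence has probability small enough to survive a union bound over $\sim n/(\log n)^A$ pairs. This requires a quantitative use of the multiscale hierarchy with the correct budgets at each level, and is precisely where the choice $\nu_p\ge A/(2\gamma)$ fixed in Section~\ref{partition} (together with $\eta>\alpha\nu_s/d$) is used.
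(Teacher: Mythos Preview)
Your disjointness argument has a genuine gap. You claim that the already-used vertex set, of cardinality at most $(i-1)c_3(\log n)^{\nu_p}$, is ``a vanishing fraction of every relevant fine-box component''. But a fine-box component has size only $\ge\beta_1(\log n)^{\nu_p}$, so this ratio is of order $i$, not $o(1)$. Even if you localise and observe that only the single path $\pi_{i-1,i}$ can meet $B_i$, that path already has length $c_3(\log n)^{\nu_p}$, which is of the \emph{same} order as an entire fine-box component $\cc_j^f$; the previous path may well traverse and exhaust exactly the component you need. So ``repeat the argument while avoiding the forbidden set'' does not go through. The paper handles disjointness by a different device: colour the fine boxes in a chessboard pattern, build one path from $\hx_i$ to $\hx_{i+1}$ using only red fine-box components and another using only blue ones, and then alternate colours along the sequence $(\pi_{i,i+1})_i$. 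This makes the paths disjoint by construction, without any avoidance argument.

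Two further issues. First, your path-length bookkeeping would not give $(\log n)^{\nu_p}$: if ``each attachment adds $O((\log n)^{\theta^k A})$ vertices'', the sum is dominated by the coarsest level and is of order $(\log n)^{\theta A}\gg(\log n)^{\nu_p}$. The paper instead shows that any two level-$\ell$ subcollections inside the same level-$(\ell-1)$ box are joined by a \emph{single edge} (events $\overline{\ce_i^\ell}$), so the diameter recursion is $d_{\ell-1}\le 2d_\ell+1$ and the final bound is $2^{s}\beta_2(\log n)^{\nu_p}+O(1)$. Second, your FKG justification fails as stated: $E_1$ contains the upper bound $|\cc_i^f|<\beta_2(\log n)^{\nu_p}$ and is therefore not increasing. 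In the paper the conditioning on $E_2$ is removed via FKG applied to the \emph{decreasing} event ``not connected'' against the increasing $E_2$, while $E_1$ is used only through the deterministic size bounds it provides.
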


\begin{lem}\label{Lemma:E_4}
Taking the $\beta_1$ and $\beta_2$ from Lemma \ref{Lemma:E_1}, there exists $c_2>0$ such that
    \[
    \lim_{n\to\infty}\P(E_\circledast\,|\,E_1\cap E_2)=1\,.
    \]
\end{lem}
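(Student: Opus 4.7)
The plan is to reduce Lemma~\ref{Lemma:E_4} to a Poisson concentration estimate for the degree of a single, heavy vertex in one coarse box, and then close the argument via a union bound. Since Lemmas~\ref{Lemma:E_1} and \ref{Lemma:E_2} already give $\P(E_1\cap E_2)\to 1$, and $E_2\subseteq\{W_{\hx_i}\ge(\log n)^\eta\ \forall i\in I_c\}$, one has
\begin{equation*}
    \P(E_\circledast^c\,|\,E_1\cap E_2) \le \frac{\P(E_\circledast^c\cap E_2)}{\P(E_1\cap E_2)} \le (1+o(1))\sum_{i\in I_c}\P\!\left(\deg(\hx_i,B_i)<c_2(\log n)^{\nu_s},\,W_{\hx_i}\ge(\log n)^\eta\right),
\end{equation*}
and since $m_c\le n$ it suffices to show that each summand is $o(1/n)$.

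Fix $i\in I_c$ and set $r_n=\rho^{-1/\alpha}(\log n)^{\eta/\alpha}$. The conditions $\eta<(A-1)/(\tau-1)$ and $\gamma>1$ yield $\eta<\alpha A/d$, so that $r_n$ is much smaller than the side $(\log n)^{A/d}$ of $B_i$, and the intersection $B(\hx_i,r_n)\cap B_i$ therefore has Lebesgue volume at least $c_\ast r_n^d = c'(\log n)^{\eta d/\alpha}$, uniformly in the position of $\hx_i\in B_i$. On $\{W_{\hx_i}\ge(\log n)^\eta\}$, any $y\in B(\hx_i,r_n)\cap B_i$ satisfies $p_{\hx_i,y}\ge 1-\exp(-\rho W_{\hx_i}/r_n^\alpha)\ge 1-e^{-1}$ by \eqref{eq:SFP base}, regardless of $W_y$. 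Conditioning on the position and weight of $\hx_i$ and invoking the Slivnyak--Mecke formula for the marked Poisson point process, the remaining points of $\cx\cap B_i$ form a PPP with Pareto weights restricted to $[1,W_{\hx_i}]$; since we only need the lower bound $W_y\ge 1$ and the intensity is diminished by a factor $1-W_{\hx_i}^{-(\tau-1)}=1-o(1)$, the number of such points in $B(\hx_i,r_n)\cap B_i$ stochastically dominates a Poisson variable of mean $(1/2)c_\ast r_n^d$ for large $n$. Thinning by the independent edge marks then shows that $\deg(\hx_i,B_i)$, conditionally on $\{W_{\hx_i}\ge(\log n)^\eta\}$, stochastically dominates a Poisson variable of mean at least $c''(\log n)^{\eta d/\alpha}$ for some $c''>0$, and the choice $\eta>\alpha\nu_s/d$ ensures $(\log n)^{\eta d/\alpha}\ge (\log n)^{\nu_s}$.

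The standard Poisson tail bound $\P(\mathrm{Poi}(\mu)<\mu/2)\le e^{-\mu/8}$, applied with $c_2:=c''/2$, then yields $\P(\deg(\hx_i,B_i)<c_2(\log n)^{\nu_s}\mid W_{\hx_i}\ge(\log n)^\eta)\le \exp(-c'''(\log n)^{\nu_s})$, and to defeat the union bound over $m_c\le n$ coarse boxes it is enough to know that $\nu_s>1$. This in turn follows from the standing assumption: since $\alpha/d\in(1,2)$ we have $A>2\gamma/(2-\alpha/d)>2\gamma$, whence $\nu_p\ge A/(2\gamma)>1/(2-\alpha/d)>1$ and consequently $\nu_s>\nu_p>1$, so that $\exp(-c'''(\log n)^{\nu_s})=o(1/n)$ and the argument closes. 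The only non-routine step is the conditional Poisson lower bound of the previous paragraph: one has to verify that conditioning on the maximum weight in $B_i$ being at least $(\log n)^\eta$, together with its location, does not corrupt the spatial Poisson structure around $\hx_i$ in the ball $B(\hx_i,r_n)$; this is the main technical point and is made precise through Slivnyak--Mecke combined with the independence of the edge marks from the weights.
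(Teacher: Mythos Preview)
The paper actually omits the proof of Lemma~\ref{Lemma:E_4}: Sections~3.4--3.6 cover Lemmas~\ref{Lemma:E_1}, \ref{Lemma:E_2} and \ref{Lemma:E_3}, and the text then passes directly to Section~4. There is therefore nothing in the manuscript to compare your argument against; your proposal fills a genuine gap.

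Your argument is correct. The central idea---on $E_2$ one has $W_{\hx_i}\ge(\log n)^\eta$, so the ball of radius $\asymp(\log n)^{\eta/\alpha}$ fits inside $B_i$ (because $\eta<\alpha A/d$) and contains $\Theta((\log n)^{\eta d/\alpha})$ Poisson points, each joined to $\hx_i$ with probability bounded away from zero---is exactly the right one, and the choice $\eta>\alpha\nu_s/d$ from the paper is precisely what makes the target exponent $\nu_s$ (rather than $\nu_p$) attainable. The conditioning step is also sound: given the location and weight of the top point of the marked Poisson process on $B_i\times[1,\infty)$, the remaining points form a Poisson process on $B_i\times[1,W_{\hx_i}]$ with the original intensity, and the truncation of the weight coordinate removes only a mass fraction $W_{\hx_i}^{-(\tau-1)}=o(1)$. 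The final step $\nu_s>\nu_p\ge A/(2\gamma)>1/(2-\alpha/d)>1$ is the correct way to absorb the union bound over $m_c\le n$ boxes.

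One arithmetical slip, of no consequence: with $r_n=\rho^{-1/\alpha}(\log n)^{\eta/\alpha}$ one gets $r_n^\alpha=\rho^{-1}(\log n)^\eta$, hence $\rho W_{\hx_i}/r_n^\alpha\ge\rho^2$ and the uniform lower bound on the connection probability is $1-e^{-\rho^2}$ rather than $1-e^{-1}$.
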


\subsection{Proof of Lemma \ref{Lemma:E_1}}
Recall that by construction each of the fine subcubes $(B_i^f)_{i\in I_f}$ has volume $(\log n)^{\nu_p}$. Let 
\begin{equation}\label{low}
    E_1^\text{low} = \{  \forall i \in I_f, \,\beta_1 (\log n)^{\nu_p} < |\cc_i^f |\}
\end{equation} 
and 
\begin{equation}\label{upp}
    E_1^\text{upp} = \{  \forall i \in I_f,\, |\cc_i^f| < \beta_2 (\log n)^{\nu_p} \}\,.
\end{equation} 
We will show that both events have probability that goes to $1$ as $n\to\infty$, which implies the same holds for $E_1$.

A union bound for the complementary event of $E_1^\text{low}$ gives
\begin{equation*}
    \P \left(\overline{E_1^\text{low}} \right) \leq m_f \P \left( |\cc_i^f| \leq \beta_1(\log n)^{\nu_p} \right).
\end{equation*}
Since $\rho > \rho_c$, by \cite[Theorem 3.4]{deprez2019scale} there exists $\beta_1 > 0$ such that, for all \(n\) sufficiently large,
\[
    \P\left( |\cc_i^f| \leq \beta_1 (\log n)^{\nu_p}\right) \le
    \exp \left(-\beta_1 (\log n)^{\nu_p(2-{\alpha'}/d)} %((\log n)^{\nu_p/d})^{2d - \alpha'}
    \right)
\]
for any \(\alpha'\in (\alpha,2d)\). Recall that $\nu_p\geq A/2\gamma$ and that $A>2\gamma/(2-\alpha/d)$ by hypothesis, so that \(\nu_p(2-\alpha/d)>1\). Hence we can choose \(\alpha'>\alpha\) such that \(\nu_p(2-\alpha'/d)>1\), too. Using \eqref{eq:BorneSupM} to bound \(m_f\), we obtain
\[
    \P\left(\overline{E_1^\text{low}}\right) \le \frac{n}{(\log n)^{\nu_p}}
    \exp\left(-\beta_1(\log n)^{\nu_p(2-\alpha'/d)}\right),
\]
which converges to 0 since \(\nu_p(2-\alpha'/d)>1\). 

To study the probability of $E_1^\text{upp}$, we simply bound the number of vertices inside each box using the following Chernoff bound for a Poisson random variable $X$ of parameter $\mu$ and $t > \mu$:
\[
    \P \left( X > t \right) \leq \e^{-\mu} \left(\frac{\e \mu}{t}\right)^t.
\]
Since the number of vertices in $B_i^f$ is a Poisson random variable of parameter $(\log n)^{\nu_p}$, by taking $\beta_2>\e$, we get that for any $i \in I_f$ 
\[
    \P (|\cc_i^f| \ge \beta_2 (\log n)^{\nu_p}) 
    \le \e^{-(\log n)^{\nu_p}} 
    \left(\frac{\e}{\beta_2}
    \right)^{\beta_2 (\log n)^{\nu_p}} < \e^{-(\log n)^{\nu_p}}\,.
\]
With this and a union bound, we obtain
\[
    \P \left(E_1^\text{upp}\right) \geq 1 - m_f \exp 
    \left( - (\log n)^{\nu_p} \right),
\]
which converges to 1 using \eqref{eq:BorneSupM} and the fact that \(\nu_p >1\).

\subsection{Proof of Lemma \ref{Lemma:E_2}}

We write $E_2=E_2^1\cap E_2^2$ with
\begin{equation*}
    E_2^1 = \{\forall i \in I_c,\ \ W_{\hx_i} > (\log n)^{\eta}\}\,,\qquad E_2^2 = \{\forall i \in I_c,\ \hx_i \in \cc_{f(i)}^f\}\,.
\end{equation*}
If we prove that $\P (E_2^1 | E_1)$ and $\P (E_2^2 | E_1)$ both converge to $1$ as $n\to\infty$ we are done.
% \begin{equation}\label{gatto}
%     \P (E_2 | E_1) = \P( E_2^1 \cap E_2^2 | E_1) = \P(E_2^2| E_1\cap E_2^1) \P (E_2^1 | E_1)\,.
% \end{equation}

First, we prove that
\begin{equation}\label{gatto1}
    \lim_{n\to\infty}\P (E_2^1 | E_1)=1\,.
\end{equation}
We begin by replacing the conditioning on \(E_1\) with conditioning on \(E_1^\text{low}\) (recall the definition in \eqref{low}). Indeed,
\begin{align}\label{gatto2}
    \P (E_2^1|E_1) & \ge 1-\P\left(\overline{E_2^1} \cup \overline{E_1^\text{low}} \cup \overline{E_1^\text{upp}}\right) \nonumber\\ 
    & \ge 1-\P\left(\overline{E_2^1} \cup \overline{E_1^\text{low}} \right) -\P\left(\overline{E_1^\text{upp}}\right) 
     = \P(E_2^1|E_1^\text{low}) \P(E_1^\text{low}) - \P\left(\overline{E_1^\text{upp}}\right).
\end{align}
We already showed that \(\P(E_1^\text{low})\) and \(\P(E_1^\text{upp})\) converge to 1 as \(n\to \infty\). To show the same for \(\P(E_2^1|E_1^\text{low})\), we use the fact that \(E_1^\text{low}\subset F=\{\forall i \in I_f, |\cx\cap B_i^f| > \beta_1(\log n)^{\nu_p} \}\). Then, reasoning as above, we get 
\begin{align}\label{gatto3}
    \P (E_2^1|E_1^\text{low}) & = \P(E_2^1\cap F|E_1^\text{low}) 
     \ge \P(E_2^1|F)\P(F)-\P\left(\overline{E_1^\text{low}}\right)
\end{align}
Again, \(\P(E_1^\text{low})\) and \(\P(F)\) converge to 1, so we need to show that \(\P(E_2^1|F)\to 1\). For this, first recall that $K_i$ was the set of indices of fine boxes contained in $B_i$ and notice that on the event \(F\) we have, for any \(i\in I_c\),
\begin{equation*}
    |\cx\cap B_i| \geq |K_i| \min_{k \in K_i} |\cx\cap B_i^f| 
    \geq \frac{m_f}{m_c} \beta_1 (\log n)^{\nu_p} \geq  c_f \beta_1 (\log n)^A\,.
\end{equation*}
With this in mind, we can use a union bound and the fact that weights are independently and identically distributed according to a Pareto distribution to obtain
\begin{align*}
    \P (E_2^1 | F) & = 
    1-\P(\exists i\in I_c,\ W_{\hx_i}<(\log n)^\eta\,|\,F)\\
    &\geq
    1 -  m_c \P \big(\forall x \in 
    \mathcal{X}\cap B_i,\ W_x \leq \log(n)^\eta \,|\, F\big)
    \geq 1 - m_c \big( 1- (\log n)^{-\eta (\tau - 1)}\big)^{c_f \beta_1 (\log n)^A}.
\end{align*}
Using \(1-t \leq \e^{-t}\), we conclude that 
\begin{align}\label{croissant}
    \P (E_2^1 | F) 
    %& \geq 1- c_c\frac{n}{(\log n)^A} \exp\left(
    %-(\log n)^{-\eta (\tau - 1)}\right)^{c_f \beta_1 \npboite} \\
     \geq 1- \frac{n}{(\log n)^A} \exp\left(
    -c_f \beta_1 (\log n)^{A-\eta (\tau - 1)}\right),
\end{align}
which converges to $1$ since $\eta < (A-1)/(\tau - 1)$ by definition. Wrapping up, putting together \eqref{gatto2}, \eqref{gatto3} and \eqref{croissant} and the considerations above, we have shown \eqref{gatto1}. 

We are left to show that 
\begin{equation}\label{gattone}
   \lim_{n\to\infty} \P(E_2^2| E_1\cap E_2^1)=1\,,
\end{equation}
which will conclude the proof of the lemma. What we are trying to show is that, with high probability, each $\hx_i$ is in the largest connected component $\mathcal C_{f(i)}^f$ of its corresponding small box. For \(i\in I_c\), write
\begin{align*}
    E_2^2(i)&=\{\hx_i \in \cc^f_{f(i)}\}\\
    E_2^1(i)&=\{W_{\hx_i} > (\log n)^{\eta}\}\\
    E_1(i)&=\{\beta_1(\log n)^{\nu_p}\leq |\mathcal{C}_{f(i)}^f|\leq \beta_2(\log n)^{\nu_p}\}\,.
\end{align*}
Then, for every $i \in I_c$, 
\begin{align}
    \P \left( \overline{E_2^2(i)} \left| E_2^1 \cap E_1\right.\right) &=\P \left( \overline{E_2^2(i)} | E_2^1(i) \cap E_1(i)\right)\nonumber \\
    & \leq \frac{ \P \left( \overline{E_2^2(i)} \cap E_2^1(i) \cap \{| \mathcal{X}\cap  B_{f(i)}^f | \geq \beta_1 (\log n)^{\nu_p}\}\right)}    {\P(E_2^1(i) \cap E_1(i)) }\label{beurre}
\end{align}
since $E_1(i)\subset \{| \mathcal{X} \cap B_{f(i)}^f | \geq \beta_1 (\log n)^{\nu_p}\}$.  The numerator on the r.h.s.~of \eqref{beurre} can be bounded as follows:
\begin{multline}
\P \left( \overline{E_2^2(i)} \cap E_2^1(i) \cap \{|\mathcal{X} \cap B_{f(i)}^f | \geq \beta_1 (\log n)^{\nu_p}\}\right)\\
\leq \sum_{k=\lfloor\beta_1(\log n)^{\nu_p}\rfloor}^{\infty} \P \left( \overline{E_2^2(i)} \, \,\Big|\,\,  E_2^1(i),\, | \mathcal{X} \cap B_{f(i)}^f | = k \right) \P ( | \mathcal{X} \cap B_{f(i)}^f | = k )\,.\label{ramarro}
\end{multline}
We notice now that if $\overline{E_2^2(i)}$ occurs, there must be at least $|\mathcal{X} \cap B_{f(i)}^f|/2$ points to which $\hx_i$ is not connected (otherwise $\hx_i$ would belong to the largest component $\cc^f_{f(i)}$). 

So, on the event $\{|\mathcal{X} \cap B_{f(i)}^f| = k\}$, let 
\[
S_k=\left\{\{y_1, \dots, y_{\lfloor \frac{k}{2}\rfloor}\},\y_j\in \mathcal{X} \cap B_{f(i)}^f\text{ and }y_j\neq y_\ell,\, \forall j\neq \ell,\,\text{ and }y_j\neq\hx_i\right\}
\]
be the set of all $\lfloor k/2\rfloor$-tuples of distinct points in $\mathcal{X} \cap B_{f(i)}^f$. With a union bound, it holds that
\begin{align*}
    \P & \left( \overline{E_2^2(i)} \,\, \Big|\,\,  E_2^1(i),\, | \mathcal{X} \cap B_{f(i)}^f | = k \right) \\
        &\leq \P \left( \exists \{y_1,\dots,y_{\lfloor k/2\rfloor}\}\in S_k\,:  \, \,\hx_i\not\sim y_j \;\;\forall j=1,\dots,\lfloor k/2\rfloor \, \Big|\,  E_2^1(i), | \mathcal{X} \cap  A_{f(i)}^f | = k \right)\\
        &\leq \binom{k}{\lfloor \frac{k}{2} \rfloor} \left( \exp\left( - \rho \frac{(\log n)^{\eta}}{(\sqrt d)^\alpha(\log n)^{\alpha\nu_p/d}}\right)\right)^{\lfloor \frac{k}{2} \rfloor } \\
        &\leq 2^k \exp \left( - c\, (\log n)^{\eta-\alpha\nu_p/d}\left\lfloor \frac{k}{2} \right\rfloor \right)\,,
\end{align*}
with $c=\rho\sqrt d^{-\alpha}$.
Since by definition \(\eta>\alpha\nu_s/d > \alpha\nu_p/d\), there exist 
\(c_1,c_2>0\) such that 
\begin{align*}
    2^k \exp \left( - c (\log n)^{\eta-\alpha\nu_p/d} \left\lfloor \frac{k}{2} \right\rfloor \right)  & \leq \exp \left( - c_1 (\log n)^{\eta-\alpha\nu_p/d } \left\lfloor \frac{k}{2} \right\rfloor\right) \\ 
        & \leq \exp \left( -  c_2(\log n)^{\eta-(\alpha/d - 1)\nu_p} \right)
\end{align*}
uniformly in \(k\ge \beta_1 (\log n)^{\nu_p}\). Putting this back into \eqref{ramarro} yields
\begin{equation}\label{sel}
    \P \left( \overline{E_2^2(i)} \cap E_2^1(i) \cap  \{| \mathcal{X} \cap B_{f(i)}^f | \geq \beta_1 (\log n)^{\nu_p}\}\right) \leq \exp \left( - c_2(\log n)^{\eta-(\alpha/d - 1)\nu_p }\right)\,.
\end{equation}
With a union bound over all $i\in I_c$ and using \eqref{beurre} and \eqref{sel}, we get
\begin{equation*}
    \P(E_2^2|E_1\cap E_{2}^1)=1-\P\left(\bigcup_{i\in I_c} \overline{E_2^2(i)}\Big|E_1\cap E_{2}^1\right) \geq 1-\frac{n}{(\log n)^A}\frac{\exp \left(- c_2 (\log n)^{\eta- (\alpha/d - 1)\nu_p}\right)}{\P(E_2^1 \cap E_1) }\,.
\end{equation*}
Since $\eta-(\alpha/d - 1)\nu_p > \nu_p>1$ and since $\P(E_2^1 \cap E_1)$ converges to $1$, we have \eqref{gattone}.

\subsection{Proof of Lemma \ref{Lemma:E_3}}

Our proof is inspired by that of  \cite[Theorem 2.1]{coppersmith2002diameter}. We need some adjustments to ensure that for each $i\in I_c$, the two paths involving $\hx_i$, linking it to $\hx_{i-1}$ and $\hx_{i+1}$, are disjoint. We will color the fine boxes $(\cc_j^f,\ j\in I_f)$ in red or blue according to a chessboard pattern, and say that a point in $ \bigcup_{i\in I_c}\cc_i$ is blue (red) if it lies in a blue (red) fine box. We will then prove that, with probability going to $1$, there exist paths of length $O((\log n)^{\nu_p})$ linking each couple $\hx_i$, $\hx_{i+1}$ that involve only red vertices contained in the boxes $B_i$ and $B_{i+1}$. By symmetry, the same will be true using only blue vertices. 
We can then build a sequence of disjoint paths as in \eqref{E_doublefleche} by alternating between red and blue paths, which will prove Lemma \ref{Lemma:E_3}. 

%So there are, with probability going to $1$, two sets of disjoint paths with lengths $O(\log(n)^{\nu_p})$ linking the $\hx_i$. We can then build a single set of paths such as in \eqref{E_doublefleche} by alternating between red and blue paths, which will prove Lemma \ref{Lemma:E_3}. 
To be precise, let 
\begin{equation}\label{E_doublefleche_rouge}
    E_\leftrightarrow^{r} = \begin{Bmatrix*}[l]
        \forall i \in I_c,\ \hx_i \text{ and } 
        \hx_{i+1} \text{ are connected by a path of}\\
        \text{red vertices in }B_i\cup B_{i+1}\text{ of length at most } c_3 
        (\log n)^{\nu_p} \end{Bmatrix*},
\end{equation}
and let $E_\leftrightarrow^{b}$ be defined similarly for blue vertices. Since $E_\leftrightarrow^{r}\cap E_\leftrightarrow^{b}\subset E_\leftrightarrow$, it will be sufficient to prove that $\P(E_\leftrightarrow^{r}|E_1\cap E_2)\to 1$ when $n\to\infty$, as the same will be true for $E_\leftrightarrow^{b}$. Thus, for simplicity, in the proof below we will construct only paths involving red vertices.

For all $i \in I_c$ we denote by $K_i^{r} \subset K_i$ the subset of indices such that $j \in K_{i}^{r}$ if and only if $\cc_j^f$ is red. Similarly, for all $1 \leq k \leq s$ and all $i \in I_c$, $\ci=(i_1, \dots, i_{k})\in I_1\times\dots\times I_k$, denote by $K_{i}^{\ci,r} \subset K_{i}^{\ci}$ the indices of the red connected components included in the the box $B_i^{i_1,\dots,i_k}$. For $i \in I_c$, let then
\begin{equation*}
    \crr_i = \bigcup_{j \in K_i^{r}} \cc_j^f
\end{equation*}
be the union of all largest connected components of fine red boxes in $B_i$ and for all $1 \leq k \leq s$ and all $i \in I_c$, $\ci=(i_1, \dots, i_{k})\in I_1\times\dots\times I_k$, let
\begin{equation*}
    \crr_i^{\ci} = \bigcup_{j \in K_i^{\ci,r}} \cc_j^f.
\end{equation*}
Let us define the following events:
\begin{equation*}
    \ce_0 = \{\exists i \in I_c,\
    \crr_i \text{ and } \crr_{i+1} \text{ are not connected by an edge}\}, 
\end{equation*}
and for every $i \in I_c$, 
\begin{align*}
    \ce_i^{1} & = \left\{ \exists j, k \in I_1,\  \crr_i^{j} \text{ and } \crr_i^{k} \text{ are not connected by an edge}\right\}, \\
    \ce_i^{2} & = 
    \begin{Bmatrix*}[l]
        \exists i_1 \in I_1, j, k \in I_2,\ 
        \crr_i^{i_1j} \text{ and } \crr_i^{i_1k} \text{ are not connected by an edge}
    \end{Bmatrix*} \\
    \vdots & \\
    \ce_i^{s}  & = 
    \begin{Bmatrix*}[l]
        \exists i_1 \in I_1, \dots , i_{s-1} \in I_{s-1}, j,k \in I_s ,\ \\ \crr_i^{i_1\dots i_{s-1}j}
        \text{ and } \crr_i^{i_1\dots i_{s-1}k} \text{ are not connected by an 
        edge} \\
    \end{Bmatrix*}. 
\end{align*}
When $\hx_i$ belongs to a red box, under $E_2$ it will be automatically connected to a red vertex in its fine box $\cc_{f(i)}^f$. When $\hx_i$ belongs to a blue box instead, we will prove that, with high probability, it is connected with a single edge to the largest connected component of a  neighbouring red fine box. 
To this end, for all $i \in I_c$, let $\hat{\ci}_i=(\hat{\imath}_1, \dots, \hat{\imath}_{s-1})$ be the indices such that $\hx_i \in B_{i}^{\hat{\imath}_1, \dots, \hat{\imath}_{s-1}}$, the box at level $s-1$ containing $\hx_i$. Define:
% {\color{red}*****************A CHANGER
% 
% \begin{align*}
%     \ce_i = 
%     \begin{Bmatrix*}[l]
%         \{\hx_i \text{ is in a blue box and is not connected to any $\cc_j^f$ with $j \in K_i^{\hat{\ci}_i, r}$} \\ \text{such that $B_j^f$ is a neighbour of $B^f_{f(i)}$\\
%     \end{Bmatrix*}. 
%     %\forall j \in K_i^{\hat{\ci}_i, r},\ \hx_i \text{ is not connected by an edge to } \cc_j^f \}.
% \end{align*}
% }
\begin{align*}
    \ce_i  = 
    \begin{Bmatrix*}[l]
        \hx_i \text{ is in a blue box and is not connected by an edge to any $\cc_j^f$ } \\ \text{with $j \in K_i^{\hat{\ci}_i, r}$ such that $B_j^f$ is a box adjacent to $B^f_{f(i)}$}
    \end{Bmatrix*}. 
\end{align*}
%For all $i \in I_c$, let $\hat{\ci}_i=(\hat{\imath}_1, \dots, \hat{\imath}_{s-1})$ be the indices such that $\hx_i \in B_{i}^{\hat{\imath}_1, \dots, \hat{\imath}_{s-1}}$, the box at level $s-1$ containing $\hx_i$. Define:
% \begin{align*}
%     \ce_i = \{\forall j \in K_i^{\hat{\ci}_i, r},\ \hx_i \text{ is not connected by an edge to } \cc_j^f \}.
% \end{align*}
On the event $E_1 \cap E_2$, we claim that if none of the events 
$\ce_0, (\ce_i, \ce_i^{1}, \dots, \ce_i^{s})_{i \in I_c}$ occur, then for all $i$ there exists a path between $\hx_i$ and $\hx_{i+1}$ of length at most $2^{s+2} \beta_2 (\log n)^{\nu_p}$ that uses only red vertices belonging to $B_i\cup B_{i+1}$.

Indeed, to see this, let $i \in I_c$. The fact that $\ce_i^{s}$ does not occur implies that for any set of indices $i_1 \in I_1, \dots, i_{s-1} \in I_{s-1}$, any pair of red connected components $\crr_i^{i_1\dots i_{s-1}j}$ and $\crr_i^{i_1\dots i_{s-1} k}$ are connected by an edge. Thus, for any pair of vertices $x, y \in \crr_i^{i_1\dots i_{s-1}}$,
\begin{equation*}
    d(x, y) \leq 2 \max \big\{ {\rm diam} ( \crr_i^{i_1\dots i_{s-1} j} ), 
    {\rm diam}(\crr_i^{i_1\dots i_{s-1} k} ) \big\} + 1 
    \leq 2 \beta_2 (\log n)^{\nu_p} + 1
\end{equation*}
where ${\rm diam}(\crr_i^{i_1\dots i_s})$ denotes the diameter of the graph $\crr_i^{i_1\dots i_{s}}$, which is bounded from above by \(\beta_2 (\log n)^{\nu_p}\) on the event \(E_1\). Similarly, since $\ce_i^{s-1}$ does not occur either, then for any pair of vertices $x, y \in \crr_i^{i_1\dots i_{s-2}}$,
\begin{equation*}
    d(x,y) \leq 2 (2\beta_2 (\log n)^{\nu_p} + 1)+ 1 
    = 4 \beta_2 (\log n)^{\nu_p} + 3,
\end{equation*}
and where the shortest path can be chosen using exclusively nodes from $\crr_i^{i_1, \dots, i_{s-2}} $, i.e., using exclusively nodes from red connected components. By following this reasoning, if none of the events $\ce_i^{1}, \dots, \ce_i^{s}$ occur, then for any pair of vertices $x, y \in \crr_i$, 
\begin{equation}\label{distance_i}
    d(x, y) \leq 2^s \beta_2 (\log n)^{\nu_p} + 2^s,
\end{equation}
where the bound for the distance is calculated assuming that the path between $x$ and $y$ touches exclusively nodes from $\crr_i$. Notice now that on $\overline{\ce_i}\cap \overline{\ce_{i+1}}$, there exist red vertices $y_i^1$ (resp. $y_{i+1}^1$) connected by an edge to $\hx_i$ (resp. $\hx_{i+1}$) in one of the $\cc_j^f$ such that $j \in K_i^{\hat{\ci}_i}$ (resp. $j \in K_i^{\hat{\ci}_{i+1}}$). Finally, the non-occurrence of $\ce_0$ implies that, for every $i \in I_c$, $\crr_i$ and $\crr_{i+1}$ are connected by an edge, so there exists $y_i^2 \in \crr_i$ and $y_{i+1}^2\in \crr_{i+1}$ that are connected by an edge, so that
\begin{align*}
    d(\hx_i, \hx_{i+1}) & \leq  d(y_i^{1}, y_i^{2}) 
    + d(y_{i+1}^{2}, y_{i+1}^{1})
    + 3.
\end{align*}
Therefore, using \eqref{distance_i}, we conclude that
\begin{equation*}
    d(\hx_i, \hx_{i+1}) \leq 2^{s+1} \beta_2(\log n)^{\nu_p}
    + 2^{s+1} + 3 \leq 2^{s+2} \beta_2 (\log n)^{\nu_p},
\end{equation*}
where this bound was realized by taking a path between $\hx_i$ and $\hx_{i+1}$ using exclusively nodes from red connected components in $B_i\cup B_{i+1}$. Hence,
\begin{equation*}
    \P (E_{\leftrightarrow}^{r} | E_1 \cap E_2) \geq \P \left( \left. \overline{\ce_0} \cap \bigcap_{i\in I_c} \overline{\ce_i}
     \cap \bigcap_{i \in I_c} \left( \overline{\ce_i^{1}} \cap \dots 
    \cap \overline{\ce_i^{s}} \right) \,\right|\, E_1 \cap E_2 \right). 
\end{equation*}
To prove the convergence to $1$ of the right-hand side of this inequality, we will prove separately that 
\begin{equation}\label{cammello1}
\lim_{n\to\infty}\P \left(\left.\bigcap_{i \in I_c} \overline{\ce_i} \,\right|\, E_1 \cap E_2 \right)= 1,
\end{equation}
that 
\begin{equation}\label{cammello2}
\lim_{n\to\infty}\P \left(\overline{\ce_0}| E_1 \cap E_2 \right)=1,
\end{equation}
and 
\begin{equation}\label{cammello3}
    \lim_{n\to\infty}\P \bigg( \bigcap_{i \in I_c} \overline{\ce_i^{1}} \cap \dots \cap \overline{\ce_i^{s}} \,\bigg|\, E_1 \cap E_2 \bigg)= 1\,.
\end{equation} 
The proof will then be complete.

We start with \eqref{cammello1}. Let $i \in I_c$, let $j\in K_i^{\hat{\ci}_i}$ be such that the boxes $B_{f(i)}^f$ and $B_j^f$ are adjacent. Note that there are at least $d$ such boxes, depending on the position of $B_{f(i)}^f$ within the box of order $s-1$. We can then first compute the probability that $\hx_i$ is not connected to the red connected component $\cc_j^f$:
\begin{equation*}
     \P (  \hx_i \text{ not connected to } \cc_j^f  | E_1 \cap E_2 ) \leq \exp \left( -\rho \frac{(\log n)^\eta}{(2d)^{\alpha/2}
    (\log n)^{\nu_p\alpha/d}} \right)^{\beta_1 (\log n)^{\nu_p}},
\end{equation*}
where we used the lower bounds $|\cc_j^f|>\beta_1 (\log n)^{\nu_p}$ and $W_{\hx_i}>(\log n)^\eta$ on the event $E_1\cap E_2$, as well as the distance bound $\| \hx_i - y \| \leq\sqrt{2d} (\log n)^{\nu_p/d}$ for $y \in C_j^f$. Hence, there exists a constant $c > 0$ such that
\begin{equation*}
     \P ( \hx_i \text{ not connected to } \cc_j^f  | E_1 \cap E_2 ) \leq \exp \left( -c (\log n)^{\eta + \nu_p (1-\alpha/d)} \right).
\end{equation*}
Then, we can use the conditional independence of the events $\{\hx_i \text{ not connected to } \cc_j^f\}$ given $(\hx_i,W_{\hx_i})$ to get:
\begin{align*}
    \P( \ce_i |E_1\cap E_2) & \le \exp \left( -c (\log n)^{\eta + \nu_p (1- \alpha/d)} \right)^d \\
    & = \exp\left(-cd(\log n)^{\eta + \nu_p (1-\alpha/d)} \right).
\end{align*}
By using an union bound over $I_c$, we conclude that
\begin{align*}
    \P \left(\left.\bigcap_{i \in I_c} \overline{\ce_i}\ \right| E_1 \cap E_2 \right) & \geq 1 - \P \left(\left.\bigcup_{i \in I_c} \ce_i \ \right| E_1 \cap E_2 \right) \\
    & \ge  1 - \frac{n}{(\log n)^A} \exp\left(-c (\log n)^{\eta + \nu_p (1-\alpha/d)} \right).
\end{align*}
which converges to 1 since $\eta + \nu_p (1-\alpha/d) > \nu_p > 1$.

Continuing with \eqref{cammello2}, using a union bound, for any $i \in I_c$, we have that 
\begin{align}\label{eq:mathcal{E}0}
    \P(\overline{\ce_0} | E_1 \cap E_2)  &= 1 - \P(\ce_0 | E_1 \cap E_2 ) \nonumber\\
    &\geq 1 - m_c \P(\crr_i \text{ and } \crr_{i+1} \text{ are not connected by an edge } | E_1 \cap E_2 ).
\end{align}
For every $i \in I_c$, $\crr_i$ and $\crr_{i+1}$ are not connected if and only if no pair of points in these graphs is connected. Since the event \(\{\crr_i\text{ and } \crr_{i+1}\text{ are not connected}\}\) is decreasing and \(E_2\) is increasing, the FKG inequality \eqref{eq:IneqFKG} gives
\begin{equation*}
    \P(\crr_i \text{ and } \crr_{i + 1}\text{ are not connected}|E_1\cap E_2)
    \leq \P(\crr_i \text{ and } \crr_{i+1} \text{ are not connected} | E_1).
\end{equation*}
Additionally, on the event $E_1 = \{\forall i \in I_f,\ \beta_1 (\log n)^{\nu_p} < |\cc_i^f | < \beta_2 (\log n)^{\nu_p}\}$,
\begin{equation*}
    |\crr_i| \geq |K_i^{r}| \min_{j \in K_i^{r}} |\mathcal C_j^f| \geq  \frac{1}{2} \frac{m_f}{m_c}\beta_1 (\log n)^{\nu_p} \geq \frac{c_f \beta_1}{2} (\log n)^{A}.
\end{equation*}
So using the independence of edges conditionally on the weights we obtain, for some constant $c>0$,
\begin{align*}
    \P (\crr_i \text{ and } \crr_{i+1} \text{ are not connected by an edge } | E_1 ) 
    & \leq \exp \left(-\rho \frac{1}{\sqrt{d}^\alpha 
    (\log n)^{A\alpha/d}} \right)^{c_f^2 \beta_1^2\npboitecarre/4}  \\ 
    & = \exp \left(- c (\tboite)^{A(2-\alpha/d)}\right), 
\end{align*}
where we use the fact that all weights are greater than $1$ and that for every $x_i\in \crr_i$ and $x_{i+1}\in \crr_{i+1}$ we have $\| x_i - x_{i+1}\|\le\sqrt{d}(2(\log n)^{A/d})$. Plugging this into equation \eqref{eq:mathcal{E}0} and recalling that $m_c = \lfloor (n/(\log n)^A)^{1/d}\rfloor^d$, we conclude that
\begin{equation}\label{eq:epsilon0}
    \P (\overline{\ce_0} | E_1 \cap E_2) \geq 1 - \frac{n}{2(\log n)^A}
    \exp \left(- c (\log n)^{A(2 - \alpha/d)} \right)\,.
\end{equation}
The right-hand side converges to $1$ as $n \to \infty$ since $A$ is chosen such that $A(2 - \alpha/d) > 2\gamma > 1$. 

Let's finally turn our attention to \eqref{cammello3}. Again, using \eqref{eq:IneqFKG}, for any $\ell \in \{1, \dots, s\}$, and any set of indices $i_1 \in I_1, \dots, i_{\ell-1} \in I_{\ell-1}$ and $j,k \in I_\ell$,
\begin{multline*}
    \P(\crr_i^{i_1\dots i_{\ell-1}j} \text{ and } \crr_i^{i_1\dots i_{\ell-1}k}
     \text{ are not connected by an edge } | E_1 \cap E_2) \\
     \leq \P(\crr_i^{i_1\dots i_{\ell-1}j} \text{ and } 
     \crr_i^{i_1\dots i_{\ell-1}k} \text{ are not connected by an edge } | E_1).
\end{multline*}
Similarly, the conditioning on $E_1$ gives rise to the following inequality, for some constant $c_1>0$:
\begin{align*}
    \left|\crr_i^{i_1\dots i_{\ell-1}j}\right| >   \left|K_i^{i_1\dots i_{\ell-1}j, r}\right| 
    \min_{j \in K_i^{i_1\dots i_{\ell-1}j,r}} |\mathcal C_j^f| 
    & \ge m_{\ell+1}\dots m_s \beta_1 (\log n)^{\nu_p}/2 \\
    & \geq c_1  (\log n)^{A\theta^{\ell} - \nu_p} \beta_1 (\log n)^{\nu_p} \\
    & \geq  c_1  \beta_1 (\log n)^{A\theta^{\ell}}.
\end{align*}
We can now use the independence of edges conditionally on weights, and get, for some $c>0$,
\begin{align*}
    \P(\crr_i^{i_1\dots i_{\ell-1}j} \text{ and } & \crr_i^{i_1\dots i_{\ell-1}k} 
    \text{ are not connected by an edge } |\, E_1 ) \\
    & \leq \exp \left(- \rho \frac{1}{\sqrt{d}^\alpha 
    \left((\log n)^{\theta^{\ell-1} A/d}\right)^\alpha} 
    \right)^{c_1^2\beta_1^2(\log n)^{2A\theta^{\ell}}} \\
    & = \exp \left(-c (\log n)^{A\theta^{\ell-1} (2 \theta - 
    \alpha/d)} \right),
\end{align*}
where, again, we bound all weights from below by $1$ and use the fact that two points in $\crr_i^{i_1\dots i_{\ell-1}}$ are distanced by at most $\sqrt{d}(\log n)^{\theta^{\ell-1} A/d}$. With a union bound, we then get that for every $i \in I_c$, and $\ell \in \{1, \dots, s\}$,
\begin{align*}
    \P(\ce_i^{\ell} | E_1 \cap E_2) & \leq \frac{1}{2} m_1 m_2 \dots \binom{m_\ell}{2} \exp
    \left(-c(\log n)^{A\theta^{\ell-1}(2\theta-\alpha/d)} \right) \\
    & \leq  (\log n)^{A(1 - \theta)} \dots (\log n)^{2A(\theta^{\ell-1} - \theta^\ell)} \exp \left(
    -c (\log n)^{A\theta^{\ell-1}(2\theta - \alpha/d)} \right) \\
    & \leq (\log n )^{A (1 + \theta^{\ell-1} - 2\theta^\ell)} \exp \left(-c (\log n)^{A\theta^{\ell-1} (2 \theta - \alpha/d)} \right),
\end{align*}
thus 
\begin{align*}
    \P (\ce_i^{1} \cup \dots \cup \ce_i^{s} | E_1 \cap E_2) & \leq \sum_{\ell=1}^s 
    (\log n )^{A} \exp \left(-c 
    (\log n)^{A\theta^{\ell-1} (2 \theta - \alpha/d)} \right) \\
    & \leq s \left(\log n\right)^{A} \exp \left(-c 
    (\log n)^{\nu_p(2 \theta - \alpha/d)} \right)
\end{align*}
since $\theta^\ell A > \theta^s A = \nu_p$ and $1+\theta^{\ell-1}-2\theta^\ell<1$ for all $\ell \in \{1, \dots, s-1\}$. We then have that 
\begin{align*}
    \P \left(\left. \bigcap_{i \in I_c} \overline{\ce_i^{1}} \cap \dots \cap 
    \overline{\ce_i^{s} } \,\right|\, E_1 \cap E_2  \right) 
    %& \geq 1 - \P \left( \bigcup_{i\in I_c} \ce_i^1 \cup \dots \cup 
    %\ce_i^s \bigg| E_1 \cap E_2 \right) \\
    & \geq 1 -  \mnd \P(\ce_i^{1} \cup \dots \cup \ce_i^{s} | E_1 \cap E_2) \\
    & \geq 1 - sn \exp \left(-c (\log n)^{\nu_p(2 \theta - \alpha/d)} \right).
\end{align*}
Recalling that \(\theta > \gamma/A + \alpha/(2d)\), we notice that the r.h.s.~in the previous expression converges to $1$, since \(\nu_p>A/(2\gamma)>1/(2\theta-\alpha/d)\). Thus, we have verified  \eqref{cammello3} and the proof is finished.

%-----------------------------------------------------------------------------------%
%-----------------------------------------------------------------------------------%
\section{Proof of Theorem \ref{main_theorem}, (i): extinction time  for 
\texorpdfstring{$\gamma \in (1,2)$}{1<gamma<2}}\label{sec:extinction_time_gamma12}
%-----------------------------------------------------------------------------------%
%-----------------------------------------------------------------------------------%

In this section, we prove part \ref{(i)} of Theorem \ref{main_theorem}, closely following the approach of \cite{linker_contact_2021} and \cite{gracar_contact_2022}. Let $\alpha > d$, $\rho> 0$ and $\tau\ge 1$ such that $\gamma=\alpha(\tau-1)/d \in (1,2)$ and, for all $n\ge 1$, let $\cg_n = (V_n, E_n)$ be a random graph distributed as SFP restricted to $[0,n^{1/d})^d$ with parameters $\alpha, \tau, \rho$. We will again make use of Proposition \ref{prop:mountford_strategy} to show survival of the contact process. However, in this regime, the connectivity structure of $\cg_n$ is strongly correlated to its heavy-tailed weight structure, which makes it possible to show that there exist $(S,1,2^d+2)$-constellations containing $\Theta(n)$ stars, instead of $\Theta(n(\log n)^{-A})$ as in Section \ref{sec:extinction_time_gamma2}. It will sometimes be useful to visualize SFP as a graph in $\R^d \times (1, +\infty)$, where each vertex $\mathtt{x} = (x, W_x) \in \R^d \times (1, +\infty)$ consists of a spatial component $x$ and its associated weight $W_x$.

\begin{prop}\label{prop:graphe etoile}
    For all $S > 0$, there exists $\varepsilon > 0$ and $b > 0$ such that for sufficiently large $n$, 
    \begin{equation}
        \P \left( \cg_n \text{ contains a } (S,1,2^d+2)-\text{constellation with } bn \text{ stars} \right) > 1- \e^{-n^{\varepsilon}}.
    \end{equation}
\end{prop}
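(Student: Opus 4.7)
The approach closely mirrors the construction developed for the hyperbolic random graph in \cite{linker_contact_2021}: coarse-grain $[0,n^{1/d})^d$ into cells of some large but fixed side $L$, pick inside each cell a candidate star of unusually large weight, and then use a stochastic domination argument by a supercritical Bernoulli site percolation on $\Z^d$ to extract a macroscopic connected cluster of cells whose stars are joined by direct edges of $\cg_n$. This cluster will then be pruned to a bounded-degree spanning tree and augmented with pendant leaves to reach degree $S/2$ at every star.

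Concretely, I would fix an exponent $\kappa\in(\alpha/2,\,d/(\tau-1))$, which is non-empty precisely because $\gamma=\alpha(\tau-1)/d<2$, and set $W_0=L^\kappa$. Partition the box into $N=\lfloor n^{1/d}/L\rfloor^d=\Theta(n)$ cells $(B_v)$ of side $L$, and for each $v$ let $x_v$ denote the vertex of maximal weight in $B_v$ when one exists. Declare $B_v$ \emph{good} if: (i) $W_{x_v}\geq W_0$; (ii) $x_v$ has at least $S$ neighbours in $\cg_n$ lying inside a bounded multiple of $B_v$; and (iii) $x_v$ is joined in $\cg_n$ to $x_{v'}$ for every neighbouring cell $B_{v'}$ that also satisfies (i). Elementary computations for the Poisson process and the Pareto weights give $\P(\mathrm{(i)})=1-\exp(-L^{d-\kappa(\tau-1)})\to 1$ as $L\to\infty$ because $\kappa<d/(\tau-1)$; the expected degree of a vertex of weight $W_0$ is of order $W_0^{d/\alpha}=L^{\kappa d/\alpha}\to\infty$ (using that the relevant integral converges thanks to $\gamma>1$), so conditionally on (i) the event (ii) holds with probability tending to $1$; and the connection probability in (iii) is bounded below by $1-\exp(-cL^{2\kappa-\alpha})\to 1$ because $\kappa>\alpha/2$. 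Hence for any prescribed $p^*<1$, one can choose $L$ large enough that $\P(B_v\text{ is good})\geq p^*$.

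Since the event $\{B_v\text{ is good}\}$ is measurable with respect to the marked Poisson process restricted to a constant-sized neighbourhood of $v$ (in cell units), the family $(\mathbf{1}\{B_v\text{ good}\})_v$ has finite-range dependence. Choosing $p^*$ close enough to $1$ and applying the Liggett--Schonmann--Stacey theorem yields stochastic domination of this family from below by a supercritical Bernoulli site percolation on $\Z^d$. Standard percolation estimates in a box of side $N^{1/d}=\Theta(n^{1/d})$ then produce, with probability at least $1-\exp(-n^\varepsilon)$ for some $\varepsilon>0$, a connected cluster of good cells of cardinality at least $bN$ for some $b>0$. A breadth-first-search spanning tree of this cluster in $\Z^d$ has maximum degree at most $2d+1\leq 2^d+2$. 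Promoting each cell of the tree to its star $x_v$, and each tree edge to the edge $\{x_v,x_{v'}\}$ of $\cg_n$ granted by property (iii), produces a subtree of $\cg_n$ with $\Theta(n)$ stars, each of degree at most $2^d+2$ in the induced subgraph on stars. Finally, (ii) allows us to attach $S/2-(2^d+2)$ additional pendant leaves to every star, disjointly across stars, by a standard greedy/matching argument; this raises the degree of every star in the resulting tree $G_n$ to at least $S/2$, and $G_n$ is then the required $(S,1,2^d+2)$-constellation.

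The main obstacle is reconciling the two conflicting requirements on $W_0$: a high density of occupied cells demands $W_0$ not too large (so that $L^d\P(W\geq W_0)\to\infty$), while a high pairwise connection probability between neighbouring stars demands $W_0$ large (so that $W_0^2/L^\alpha\to\infty$). These two constraints leave a non-empty window for $\kappa$ exactly when $\gamma<2$, which is precisely what makes this direct percolation strategy succeed in the ultra-small-world regime and fail in the $\gamma>2$ regime treated in Section \ref{sec:extinction_time_gamma2}, where the much more delicate multi-scale construction of Section \ref{identification} becomes necessary.
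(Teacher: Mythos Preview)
Your single-scale coarse-graining/percolation strategy is genuinely different from the paper's proof, which follows \cite{linker_contact_2021} and \cite{gracar_contact_2022} in building a \emph{multi-layer} hierarchy of boxes $B_{k,\v}\subset\R^d\times(1,\infty)$: at level $k$ the spatial side is $2^{k+1}$ and the weight window is $(\e^{kL\alpha/\gamma},\e^{(k+1)L\alpha/\gamma})$, so each box has $2^d$ children at level $k-1$. The stars are the maximal-weight vertices of the boxes; parent--child edges, together with a path through the top layer, give the constellation tree directly, and the degree bound $2^d+2$ falls out of this structure. No LSS domination or percolation comparison is used; every connection probability is estimated explicitly (Lemma~\ref{lemme.todos}) and then chained through a cascade of conditional bounds (Lemmas~\ref{lemme: Bkpv sont connectés}--\ref{lemme: Inter E_k arrive avec grande proba}).

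Your argument has a genuine gap in dimension $d=1$, which is included in the statement. Bernoulli site percolation on $\Z$---even after reduction from any finite-range dependent field via Liggett--Schonmann--Stacey---has $p_c=1$: for every $p^*<1$ all clusters are almost surely finite, with maximal size $O(\log N)$ in a window of $N$ sites. So the domination step cannot produce a cluster of size $\Theta(n)$ when $d=1$, and this is not repairable at a single scale: with $L$ fixed, each cell fails condition~(i) with uniformly positive probability, and in one dimension a single failure disconnects the chain. The hierarchical scheme sidesteps this precisely because the tree lives in the weight direction: at level $k$ the failure probability is $\exp(-c\,\e^{kd\varepsilon_1})$, summable in $k$, so the cascade survives even for $d=1$. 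Your opening sentence notwithstanding, this is exactly what \cite{linker_contact_2021} does (there $d=1$), not a percolation argument.

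For $d\ge 2$ your sketch is essentially sound: the window $\alpha/2<\kappa<d/(\tau-1)$ is the right one, the local degree estimate works because the bulk of the neighbours of a weight-$W_0$ vertex sit within radius $W_0^{1/\alpha}=L^{\kappa/\alpha}\ll L$, and surface-order large deviations for supercritical percolation deliver the $1-\e^{-n^\varepsilon}$ bound with $\varepsilon<(d-1)/d$. A clean fix is therefore to keep your argument for $d\ge 2$ and treat $d=1$ separately via the hierarchical construction.
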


\begin{proof}
We will begin by partitioning the space $\mathbb{R}^{d} \times (1, +\infty)$ into distinct, non-overlapping boxes, each one potentially containing a star of our subgraph. We will then show that these boxes contain stars and can form a connected subgraph with high probability. The overall strategy is similar to the proof of Theorem 1.2 in \cite{linker_contact_2021}, leveraging the existence of enough high-weight nodes to build a constellation within $\cg_n$.
    
Let $0 < a < \frac{1}{\log 2}$ and $\frac{\gamma}{2}\log 2 < L < \log 2$, which is possible since $\gamma \in (1,2)$. Take $k_n = \lfloor a(\log n)/d \rfloor $ and $m_n = \lfloor n^{\left(1-a\log(2)\right)/d} \rfloor^d$. We will construct $k_n$ layers of non-overlapping boxes, where the layer with the highest weight will contain $m_n$ boxes. For every $k \in \{0, 1, 2,\dots, k_n-1\}$ let
\begin{equation*}
    V_k^n = \{0, 1, 2,\dots, m_n^{1/d} 2^{(k_n-1) - k} -1 \}^d,
\end{equation*}
the index set of the boxes in layer $k$. For $k \in \{0, 1, 2,\dots, k_n-1\}$ and  $\v = (v_1, v_2, v_3,\dots, v_d) \in V_k^n$, let 
\begin{equation*}
    \akv = \bigtimes_{i=1}^{d} \left( 2^{k+1} v_i, 2^{k+1}(v_i + 1) \right),\quad 
    \bkv = \akv \times \left( \hauteurk, \hauteurkm \right).
\end{equation*}
Notice that for every $k \in \{0, 1, 2, \dots, k_n-1\}$ and $\v \in  V_k^n$, $\akv$ has volume $2^{d(k+1)}$ and that indeed $A_{k,\v}\subset [0,n^{1/d})^d$. We will say that $\bkmv$ is the \emph{parent box} of the $2^d$ boxes $\bkvm$ for $e \in \{0, 1\}^d.$ See Fig. \ref{fig:Boxes} for a graphical illustration of this construction.

We now want to prove that each one of these boxes contains, with high probability, a star with at least $S$ neighbours and that the star in a parent box is connected to the stars in its children boxes. Let $\cb_{k,\v}^S = \{|\bkv| \ge S+1\}$ and, on this event, let $\Xkv$ be the vertex with the highest weight in $\bkv$. This vertex will be the centre of the star within the box. Define:
\[
    \star = \cb_{k,\v}^S\cap \{\Xkv\text{ has at least } S \text{ neighbours in } \bkv\}.
\]
and
\[
    \{\Xkmv \leftrightarrow \Xkvm\} = \cb_{k+1,\v}^S \cap \cb_{k,2\v+e}^S \cap \{ \Xkmv \text{ and } \Xkvm \text{ are connected by an edge}\}.
\]
\begin{figure}
    \centering
    \begin{tikzpicture}
    % Hauteur de lignes
        \newcommand{\hun}{0.4cm}
        \newcommand{\hdo}{1.2cm}
        \newcommand{\htr}{2.7cm}
        \newcommand{\hze}{3.5cm}

        % \newcommand{\hcu}{0.5}
    % Largeur boites
        \newcommand{\lun}{0.8cm}
        \newcommand{\ldo}{\dimexpr\lun+\lun \relax}
        \newcommand{\ltr}{\dimexpr\ldo+\lun \relax}
        \newcommand{\lcu}{\dimexpr\ltr+\lun \relax}
        \newcommand{\lci}{\dimexpr\lcu+\lun \relax}
        \newcommand{\lse}{\dimexpr\lci+\lun \relax}
        \newcommand{\lsi}{\dimexpr\lse+\lun \relax}
        \newcommand{\loc}{\dimexpr\lsi+\lun \relax}
        \newcommand{\lnu}{\dimexpr\loc+\lun \relax}
        \newcommand{\ldi}{\dimexpr\lnu+\lun \relax}
        \newcommand{\lon}{\dimexpr\ldi+\lun \relax}
        \newcommand{\ldoc}{\dimexpr\lon+\lun \relax}
    % Première ligne
    \draw (0,\hdo) rectangle (\lcu, \htr);
    \draw (\lcu,\hdo) rectangle (\loc, \htr);
    \draw (\loc,\hdo) rectangle (\ldoc, \htr);

    % Deuxième ligne
    \draw (0,\hun) rectangle (\ldo,\hdo);
    \draw (\ldo,\hun) rectangle (\ldo,\hdo);
    \draw (\lcu,\hun) rectangle (\ldo,\hdo);
    \draw (\lse,\hun) rectangle (\ldo,\hdo);
    \draw (\loc,\hun) rectangle (\ldo,\hdo);
    \draw (\ldi,\hun) rectangle (\ldo,\hdo);
    \draw (\ldoc,\hun) rectangle (\ldo,\hdo);

    % Troisième ligne
    % \draw (0,0) node[minimum height=\hun cm,minimum width=\lun cm, draw] {};
    \draw (0,0) rectangle (\lun,\hun);
    \draw (\lun,0) rectangle (\lun, \hun);
    \draw (\ldo,0) rectangle (\lun, \hun);
    \draw (\ltr,0) rectangle (\lun, \hun);
    \draw (\lcu,0) rectangle (\lun, \hun);
    \draw (\lci,0) rectangle (\lun, \hun);
    \draw (\lse,0) rectangle (\lun, \hun);
    \draw (\lsi,0) rectangle (\lun, \hun);
    \draw (\loc,0) rectangle (\lun, \hun);
    \draw (\lnu,0) rectangle (\lun, \hun);
    \draw (\ldi,0) rectangle (\lun, \hun);
    \draw (\lon,0) rectangle (\lun, \hun);
    \draw (\ldoc,0) rectangle (\lun, \hun);

    % Ensemble Ak
    \draw[decorate,decoration={brace,amplitude=5pt}] (\ldo,-2.2) -- (0,-2.2);
    \draw[decorate,decoration={brace,amplitude=5pt}] (\lcu,-3) -- (0,-3);

    %Nodes
    \node at (\lun,0.8) {$\bkvm$};
    \node at (\ldo,2) {$\bkmv$};
    \node at (-0.5,\hun) {$\hauteurk$};
    \node at (-0.7,\hdo) {$\hauteurkm$};
    \node at (-0.3,0) {$1$};
    \node at (\ldo,-2.5) {$\akvm$};
    \node at (\ldo,-3.5) {$\akmv$};

    \node at (\ldo,\hze) {$\vdots$};
    \node at (\lse,\hze) {$\vdots$};
    \node at (\ldi,\hze) {$\vdots$};

    %axes
    \draw (0,0) -- (0,-2);
    \draw[-latex] (0,-2) -- (\ldoc,-2);
    \draw[-latex] (0,-2) -- (0, 5);

    \node at (\ldoc, -2.3) {Position ($\mathbb{R}$)};
    \node at (-0.8, 5) {Weight};

\end{tikzpicture}
    \caption{Partitioning scheme of $[0,n^{1/d})^d\times (1,\infty)$ used in the proof of Proposition \ref{prop:graphe etoile}, in dimension 1. As the level $k$ increases, the weight layers $(\e^{kL\alpha/\gamma},\e^{(k+1)L\alpha/\gamma})$ become larger to compensate for the increasing scarcity of high-weight vertices. For $k\ge 1$, boxes $B_{k+1,\v}$ have $2^d$ children boxes denoted by $B_{k,2\v+e}$ for $e\in\{0,1\}^d$.}
    \label{fig:Boxes}
\end{figure}
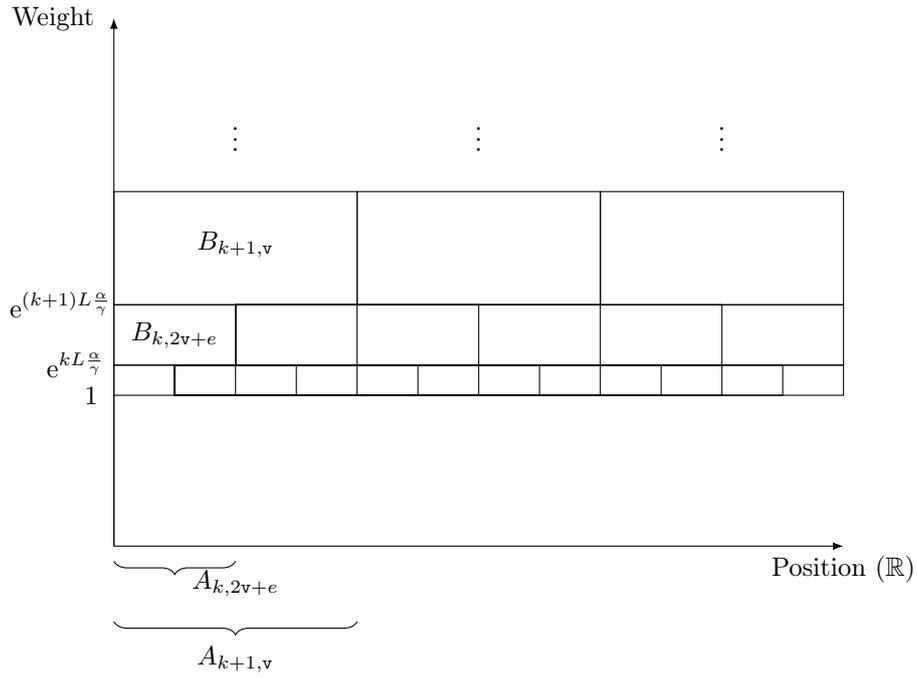

\begin{lem}\label{lemme.todos}
    \begin{enumerate}[label=\roman*)]
        \item There exists $\varepsilon_1 > 0$, $K_1 \ge 1$ and a constant $\Cu > 0$ such that for all $n$ such that $k_n>K_1$:
        \begin{equation}\label{prob:non vide}
            \P \left(\cb_{k,\v}^S \right) > 1 - \e^{-S/2} \exp(-\Cu 
            e^{kd\varepsilon_1 }),\quad K_1< k\le k_n,\ \v \in V_k^n.
        \end{equation}
        \item There exists $K_2 \ge 1$ and a constant $\Cd > 0$ such that for all $n$ such that $k_n>K_2$:
        \begin{equation}\label{prob:voisins}
            \P \left( \star | \cb_{k,\v}^S \right) > 1- \e^{-S/2} \exp(-\Cd 
            \e^{kd\varepsilon_1}),\quad K_2<k\le k_n,\ \v \in V_k^n.
        \end{equation}
        \item There exists $\varepsilon_2 > 0$ and a constant $\Ct> 0$ such that for all $n$ such that $k_n>K_2$:
        \begin{equation}\label{prob:connexion}
            \P \left(\Xkmv \leftrightarrow \Xkvm | \cb_{k+1,\v}^S \cap
            \cb_{k,2\v+e}^S \right) 
            > 1- \exp(- \Ct \e^{kd \varepsilon_2}),\quad K_2<k\le k_n,\ \v \in V_k^n.
        \end{equation}
    \end{enumerate}
\end{lem}

\begin{proof}
The number of vertices in $\bkv$ is Poisson with parameter
\[
    \mu_1^k = \int_{\akv} \int_{\hauteurk}^{\hauteurkm} (\tau -1 )w^{-\tau} dw dz 
    = 2^{(k+1)d} \left( \left(\hauteurk \right)^{-(\tau-1)} - 
    \left( \hauteurkm\right)^{-(\tau-1)} \right).
\]
For $\Cu = \left( 1 - e^{-dL} \right)$ and $\varepsilon_1 = \log(2) - L > 0$, we have:
\begin{equation}\label{eq:mu1}
    \mu_1^k =  2^d \left( 1 - \e^{-dL} \right) 2^{kd} \hdk = \Cu \e^{kd(\log(2) - L)} = \Cu \e^{kd \varepsilon_1}.
\end{equation}

As $k\to\infty$, $\mu_1^k\to\infty$, therefore there exists a $K_1 \ge 1$ such that $\mu_1^k>4S$ for every $k > K_1$. Using a Chernoff bound, we deduce that for every $k > K_1$, 
\begin{equation*}
\P \left( \bkv^S \right) \ge 1-\exp\left( -\frac{(\mu_1^k-S)^2}{2S}\right) \ge 1-\e^{-S/2} \exp(-\mu_1^k),
\end{equation*}
which gives \eqref{prob:non vide}. 

We want to prove now that, conditioned on the fact that $\bkv$ has at least $S+1$ vertices, $\Xkv$ has w.h.p at least $S$ neighbours in $\bkv$. The number of such neighbours follows a Poisson distribution with parameter given by
\begin{equation}\label{mu2inicio}
    \mu_2^k =  \int_{\akv}  \int_{\hauteurk}^{\hauteurkm} (\tau-1) 
    w^{-\tau}  \left(1-\exp \left(-\rho \frac{ W_{\xkv} w  }{\lVert \xkv-y \rVert^\alpha} \right) \right) dw dy.
\end{equation}
Since $\Xkv$ and $\mathtt{y}$ are both in $\bkv$, we have $\lVert \xkv - y\rVert^{-\alpha} \geq d^{-\alpha/2} 2^{-\alpha(k+1)}$. Their weights are bounded below by $\hauteurk$, thus there exists $c>0$ such that
\[
    p_{\Xkv,\mathtt{y}} > 1- \exp\left(- \rho d^{-\alpha/2} 2^{-\alpha(k+1)} \e^{2kL\alpha/\gamma}\right)
    = 1- \exp \left( -\rho c \e^{k\alpha( 2L/\gamma - \log(2))}\right) .
\]
Since $2L > \gamma\log(2)$, this bound can be made independent of $k$ to give
\begin{equation*}
    p_{\Xkv,\mathtt{y}} > 1-\exp \left( - \rho c \e^{\alpha \left( 2L/\gamma - \log(2) \right)}  \right), 
\end{equation*}
for every $k > 1$ and $\mathtt{v} \in V_k$. The integral (\ref{mu2inicio}) can then be bounded as follows, for every $k \in \{1,2, \dots, k_n-1\}$:
\begin{equation*}
    \mu_2^k > \left( 1-\exp \left( - \rho c \e^{\alpha \left( 2L/\gamma - \log(2) \right)} \right) \right) \mu_1^k = C_2 \e^{kd \varepsilon_1}
\end{equation*}
for $C_2 = ( 1-\exp (- \rho c \e^{\alpha ( 2L/\gamma - \log(2))})) C_1$. Using the same Chernoff bound as in the preceding proof, we conclude that there exists a $K_2 \ge 1$ such that for every $k > K_2$, 
\begin{equation*}
    \P (\star | \cb_{k,\v}^S) \geq 1- \e^{-S/2} \exp \left(-C_2 \e^{kd\varepsilon_1}
    \right),
\end{equation*}
which gives \eqref{prob:voisins}.

Turning to the connection probability \eqref{prob:connexion} between boxes, since $\Xkvm$ and $\Xkmv$ are in $\bkmv$, we have that $\lVert \xkv - y \rVert^{-\alpha} \geq d^{-\alpha/2} 2^{-\alpha(k+1)}$. Using the bounds on the weights $W_{\Xkvm}$ and $W_{\Xkmv}$, we get:
\begin{align*}
    \frac{W_{\Xkvm} W_{\Xkmv} }{ \lVert \xkvm-\xkmv \rVert^\alpha} & > \e^{(k+1)L\frac{\alpha}{\gamma}}\e^{kL\frac{\alpha}{\gamma}}d^{-\alpha/2} 
    \e^{-(k+1)\alpha\log(2)} \\    
    & > \e^{L\frac{\alpha}{\gamma} - \alpha\log(2)}d^{-\alpha/2} 
    \e^{k\alpha(2\frac{L}{\gamma} - \log(2)))}.
\end{align*}
We conclude that, for $C_3 = \rho \e^{L\frac{\alpha}{\gamma} - \alpha\log(2)} d^{-\alpha/2}$ and $\varepsilon_2 = \alpha (2L/\gamma - \log(2)) > 0$:
\begin{equation*}
    \P \left(  \Xkmv \xleftrightarrow[]{} \Xkvm | \cb_{k+1,\v}^S \cap \cb_{k,2\v+e}^S \right) 
    > 1-\exp \left(-C_3 \e^{kd \varepsilon_2} \right).
\end{equation*}
\end{proof}

Note that the constants $C_1$, $C_2$ and $C_3$ in Lemma \ref{lemme.todos} depend exclusively on the parameters 
of the random graph model and are independent of $n$, $k$ and $\v$. This lemma allows us to construct, with high probability, a sub-graph consisting of at least two stars, each one with a minimum of $S$ neighbours, where the centres of the stars are given by $\Xkmv$ and $\Xkvm$. Let's now show that, with high probability, this sub-graph contains $\Theta(n)$ stars. We start by proving that the highest-weight vertices of the boxes in the highest layer $k_n$ form a connected subgraph. This allows us to construct a sub-graph where the number of stars is of order $m_n$ (the number of boxes in this layer). We then show that from every $\widehat{\x}_{k_n,\v}$, it is possible to construct a sub-graph with a number of stars of order $2^{k_nd}$. Recalling that $k_n = \lfloor a\log(n)/d \rfloor$ and $m_n = \lfloor n^{\left(1-a\log(2)\right)/d} \rfloor^d$, we have a sub-graph with a number of stars of order
\begin{equation*}
    m_n 2^{k_n d} = \Theta(n^{1-a \log(2)} 2^{a\log(n)})= \Theta(n).
\end{equation*}
The rest of the proof follows similar calculations as presented in \cite{gracar_contact_2022}. To start, let us show that all the boxes in the highest layer form a connected subgraph. To ease the notation, let us forget about the order in space of the boxes in this layer by defining the bijection
\begin{equation*}
    \sigma : \{0, 1, 2, \dots, m_n - 1 \} \longrightarrow V_{k_n}^n
\end{equation*}
such that $\sigma(0) = (0,\dots,0)$ and for all $i$, $\bkpi$ and $\bkpim$ are neighbours. If $\bkpi$ has at least $S+1$ vertices, let $\Xkpi$ be its highest-weight vertex. We will say that the first box $B_{k_n, \sigma(0)}$ is 
\emph{good} if 
\begin{enumerate}
    \item $|B_{k_n, \sigma(0)}|\ge S+1$, 
    \item $\widehat{\mathtt{x}}_{k_n, \sigma(0)}$ has at least $S$ neighbours in 
    $B_{k_n, \sigma(0)}$.
\end{enumerate} 
Moreover, we say that for every $i \in \{1,\dots, m_n-1\}$, the box $\bkpi$ is \emph{good} if 
\begin{enumerate}
    \item $B_{k_n,\sigma(i-1)}$ is good,
    \item $|\bkpi|\ge S+1$,
    \item $\Xkpi$ has at least $S$ neighbours in $\bkpi$,
    \item $\Xkpi$ and $\widehat{\x}_{k_n,\sigma(i-1)}$ are connected by an edge.
\end{enumerate}

We use Lemma \ref{lemme.todos} to prove the following result, stating that the boxes in this layer form a connected subgraph with high probability.

\begin{lem}\label{lemme: Bkpv sont connectés}
    There exists $0< \varepsilon_3 < \varepsilon_1 \wedge \varepsilon_2 \wedge 
    (1/a - \log(2))$ and constants $C,D>0$ such that for all $n$ such that 
    $k_n > K_1\vee K_2$:
    \begin{equation}\label{eq: Bkpv tous connectés}
        \P ( B_{k_n, \v} \text{ is good for every } \v \in V_{k_n}^n) > 1 - m_n
        D\exp \left(-C \e^{k_nd\varepsilon_3}\right).
    \end{equation}
\end{lem}

\begin{proof}
From equations (\ref{prob:non vide}) and (\ref{prob:voisins}) of Lemma \ref{lemme.todos}, there exists constants $C_4$ and $D_4$ such that if $k_n > K_1 \vee K_2$,
\begin{align*}
    \P( B_{k_n, \sigma(0)} \text{ is good}) & = \P (\cb_{k_n, \sigma(0)}^S 
    \cap \{ \widehat{\mathtt{x}}_{k_n, \sigma(0)} \text{ has at least S neighbours 
    in } B_{k_n, \sigma(0)} \} ) \\
    & = \P (\cb_{k_n, \sigma(0)}^S) \P ( \mathrm{Star}(k_n, 0)| 
    \cb_{k_n, \sigma(0)}^S) \\
    & \geq ( 1 - \e^{-S/2} \exp( - C_1 \e^{k_n d\varepsilon_1})) 
    (1- \e^{-S/2} \exp (-C_2 \e^{k_nd\varepsilon_1})) \\
    & \geq  1 - D_4 \exp ( - C_4 \e^{k_nd\varepsilon_1 }).
\end{align*}
Similarly, for $\varepsilon_3 < \varepsilon_1 \wedge \varepsilon_2$ there are constants $\Dkp$ and $\Ckp$ such that if $k_n> K_1\vee K_2$
\begin{align*}
    \P \left( \bkpim \text{ is good} | \bkpi \text{ is good} \right) & > 
    ( 1 - \e^{-S/2} \exp (-C_1 \e^{k_nd\varepsilon_1 }))\\
    & \hspace{1cm}  \times (1- \e^{-S/2} \exp (-C_2 \e^{k_nd\varepsilon_1})) \\
    & \hspace{1cm}  \times (1-\exp(-C_3 \e^{k_nd \varepsilon_2}))\\
    & > 1- \Dkp \exp ( - \Ckp \e^{k_nd\varepsilon_3 }).
\end{align*}
Multiplying these estimates for all boxes in layer $k_n$, and using $(1-x)^k\ge 1-kx$, we get the desired result.
\end{proof}
    
We will also define good boxes in other layers. For $k \in \{0, 1, 2, \dots, k_n-1\}$ and $\v \in V_k^n$ let $\lfloor \frac{\v}{2} \rfloor $ be the vector defined by $(\lfloor \frac{v_1}{2} \rfloor, \dots, \lfloor \frac{v_d}{2} \rfloor)$. With this notation, the parent box of $\bkv$ is $B_{k+1, \lfloor \frac{\v}{2} \rfloor}$. We say that $\bkv$ is \emph{good} if
\begin{enumerate}
    \item $B_{k+1, \lfloor \frac{\v}{2} \rfloor}$ is good,
    \item $|\bkv|\ge S+1$,
    \item $\Xkv$ has at least $S$ neighbours in $\bkv$,
    \item $\Xkv$ and $\widehat{\mathtt{x}}_{k+1, \lfloor \frac{\v}{2} \rfloor}$ are 
    connected by an edge.
\end{enumerate}
With this definition, we can prove similarly that given a good parent box $\bkmv$, the probability that its respective children are good is close to one.
\begin{lem}\label{lemme:Bkmv bonne sachant Bkvm}
    There exist constants $\Dall,\Call>0$ such that for every $k > K_1\wedge K_2$ and $\v \in V_k^n$,
    \begin{equation*}
        \P \left( \bkvm \text{ is good } | \bkmv \text{ is good}\right) 
        > 1 - \Dall \exp \left(- \Call \e^{kd\varepsilon_3}\right)
    \end{equation*}
\end{lem}
At this stage, it remains to show that for each good parent box $\bkmv$, there are enough child boxes, $\bkvm$ with $e \in \{0,1\}^d$, which are also good, thereby ensuring the existence of a total of $\Theta(n)$ good boxes. Denote by $G_k$ the random variable counting the number of good boxes in the layer $k$. Let us now define the following events, for $k > K_1\vee K_2$:
\begin{align*}
    E_k & = \left\{ G_k > 2^d (1 - k^{-2}) G_{k+1} \right\}, \\
    E_{k_n} & = \left\{ \text{Every box in layer } k_n \text{ is good}\right\}.
\end{align*}
For any $k > K_1\vee K_2$, on the event $E_{k} \cap \dots \cap E_{k_n}$, and recalling that there are $m_n$ boxes in layer $k_n$, we have:
\begin{equation*}
    G_k > G_{k_n} \prod_{i=k}^{k_n-1} (1 - i^{-2}) 2^{d(k_n -k)}
    > \left( \prod_{i=2}^{k_n-1} (1 - i^{-2}) \right) m_n  2^{d(k_n -k)}.
\end{equation*}
Now, since $k_n = \lfloor a\log(n)/d \rfloor $ and $m_n = \lfloor n^{\left(1-a\log(2)\right)/d} \rfloor^d$, we have that:
\begin{equation*}
    G_k > \left( \frac{1}{2} \right) n^{1-a \log(2)} 2^{a\log(n)} 2^{-d} 2^{-kd} = 
    c 2^{-kd-1} n.
\end{equation*}
We can conclude that on $E_{k} \cap \dots \cap E_{k_n}$, the total number of good boxes is given by
\begin{equation*}
    G_k + \dots + G_{k_n} > \left( \sum_{j=k}^{k_n} c2^{-jd} \right) n  
    > c 2^{-kd} n = \Theta(n).
\end{equation*}
The realisation of the event $E_k \cap \dots \cap E_{k_n}$ for a $k>K_1\wedge K_2$ thus guarantees the existence of a connected sub-graph in $\cg_n$ containing $\Theta(n)$ stars. To establish Proposition \ref{prop:graphe etoile}, our remaining task is then to prove that $E_k \cap \dots \cap E_{k_n}$ occurs with probability bounded from below by $1-\exp (-n^\varepsilon)$ for some $\varepsilon > 0$. Consequently, the following lemma concludes the proof. 

\begin{lem}\label{lemme: Inter E_k arrive avec grande proba}
There exists $\varepsilon > 0$ and $K_3 \ge K_1\vee K_2$ such that for all $k>K_3$:
\begin{equation*}
    \P \left( E_k \cap \dots \cap E_{k_n}\right) > 1 - \exp(-n^{\varepsilon}).
\end{equation*}
\end{lem}

\begin{proof}
Notice that for every $e$ and $e'$ in $\{0,1\}^d$, with $e \ne e'$, the boxes $\bkvm$ and $B_{k,2\v+e'}$ are disjoint. Hence, the events $\{\bkvm \text{ is good}\}$ and $\{B_{k, 2\v+e'} \text{ is good}\}$ are independent conditionally given $\{\bkmv \text{ is good}\}$. Consequently, we have that conditionally given $G_{k+1}$, $G_k$ is a binomial random variable with parameters $2^dG_{k+1}$ and $p_k$ given by
\begin{equation*}
    p_k = \P \left( \bkvm \text{ is good} | \bkmv \text{ is good}\right) 
    >  1 - \Dall \exp (- \Call \e^{kd\varepsilon_3}).
\end{equation*}
Choose now $K_3 \ge K_1\wedge K_3$ such that $1-k^{-2} < p_k$ for every $k > K_3$. Then, by Chernoff's bound for binomial distributions, the following holds for every $k > K_3$:
\begin{equation*}
    \P\left(\overline{E}_k | G_{k+1} \right) \leq  \exp \left( -\frac{2^{d-1}G_{k+1}
    \Call \e^{kd\varepsilon_3}}{k^2} \right).
\end{equation*}
Consequently, by definition of $k_n$ and $m_n$:
\begin{align*}
    \P \left( E_k| E_{k +1} \cap \dots \cap E_{k_n}\right) & \geq 1 - \exp \left( 
    - \frac{2^{d-1} \left( 2^{d(k_n-1-k)} m_n \prod_{i = k+1}^{k_n-1}(1- i^{-2}) 
    \right) \Call \e^{kd\varepsilon_3} }{k^2}  \right) \\
    & \geq 1- \exp \left( - \frac{1}{2} \prod_{i = 2}^{\infty}(1- i^{-2}) 
    \frac{2^{-kd} 2^{k_nd} m_n \Call \e^{kd\varepsilon_3}}{k^2}  \right) \\
    & \geq 1- \exp \left( - \Ctildeu \frac{2^{-kd} \e^{kd\varepsilon_3}n}{k^2}  
    \right)
\end{align*}
for some $\Ctildeu>0$. Now, since $\varepsilon_3 < \frac{1}{a} - \log2$, for all $k \in \{K_3, \dots, k_n-1\}$:
\begin{equation*}
\frac{2^{-kd} \e^{kd\varepsilon_3} n}{k^2} \geq \frac{2^{-k_nd} n}{k_n^2} 
    \geq \Ctilded \frac{n^{a \varepsilon_3}}{(\log n)^2},
\end{equation*}
hence
\begin{equation*}
    \P \left( E_{k}| E_{k+1} \cap \dots \cap E_{k_n}\right) \geq 1- \exp \left( 
    -\Ctilde \frac{n^{a\varepsilon_3}}{(\log n)^2} \right).
\end{equation*}
Multiplying these estimates, we finally get
\begin{align*}
    \P ( E_k \cap \dots \cap E_{k_n}) & = \P(E_{k_n}) \left( \prod_{i = 1}^{k_n-k}
    \P ( E_{k_n-i} | E_{k_n-i+1} \cap \dots \cap E_{k_n}) \right) \\
        & \geq \P(E_{k_n}) \left(1- \exp \left( - \Ctilde \frac{n^{a\varepsilon_3}}
        {(\log n)^2} \right) \right)^{k_n} \\
        & \geq \P(E_{k_n}) \left( 1- \lfloor a (\log n)/d \rfloor \exp \left( - \Ctilde \frac{n^{a\varepsilon_3}}{(\log n)^2} \right) \right)
\end{align*}
Recall now equation \eqref{eq: Bkpv tous connectés}, which gives us 
\begin{align*}
    \P \left( E_k \cap \dots \cap E_{k_n} \right) 
        & \geq \left( 1 - m_n\exp (-\Call \e^{k_nd\varepsilon_3}) \right) \left( 1- \lfloor a (\log n)/d \rfloor \exp \left( - \Ctilde \frac{n^{ad\varepsilon_3}}{(\log n)^2} \right)\right) \\
    & \geq \left( 1 - n^{1-a\log2} \exp (-\Call n^{ad\varepsilon_3} ) \right) \left( 1- \lfloor a (\log n)/d \rfloor \exp \left( - \Ctilde \frac{n^{ad\varepsilon_3}}{(\log n)^2} \right) \right) \\
    & \geq 1- \exp(-n^{\varepsilon})
\end{align*}
for a certain $\varepsilon > 0$.
\end{proof}

\end{proof}

\begin{proof}[of Theorem \ref{main_theorem}, part \ref{(i)}]
    Let $G_n = (V_n, E_n)$ be the subgraph of $\cg_n$ given by the Proposition \ref{prop:graphe etoile}. $G_n$ is a connected tree by construction. Denote $J_n \subset V_n$ the set of centers of the stars in $G_n$: $J_n= \{\Xkv, k \in \{k_3, \dots, k_n\}, \v \in V_k\}$. Let $G_n'$ be the graph with vertex set $J_n$ and edges given by 
    \begin{equation*}
        E_n' = \left\{\{\mathtt{x}, \mathtt{y} \} : \mathtt{x}, \mathtt{y} \in J_n \text{ and } \mathtt{x} \xleftrightarrow{} \mathtt{y}\right\}.
    \end{equation*}
    $G_n'$ is then a finite tree with degree bounded by $2^d+2$ (the additional $2$ is given to take into account the vertices in the highest layer). Hence, Proposition \ref{prop:mountford_strategy} shows that for every $\lambda > 0$ and for $S \geq C\lambda^{-2} \log(1/\lambda)$, there exists a constant $c> 0 $ such that
        \begin{equation*}
            \lim_{n \to \infty} \P \left( \tau_{G_n} \geq \e^{cn}\right) = 1
        \end{equation*}
    where $\tau_{G_n}$ is the extinction time of the contact process on $G_n$, starting from full occupancy and with rate of infection $\lambda$. Since $G_n$ are subgraphs of $\cg_n$, the same limit must hold for $\tau_{\cg_n}$.
\end{proof}

%------------------------------------------------------------------------------------------------------%
\section{Sketch of the proof of Theorem \ref{thm:non-extinction}: non-extinction probability on the infinite graph}\label{sec:non_extinction}
%------------------------------------------------------------------------------------------------------%

In this section, we outline the proof of Theorem \ref{thm:non-extinction}. We highlight the key arguments required to adapt the proof of \cite[Theorem 1.1]{linker_contact_2021} to the SFP model in $\mathbb{R}^d$. The HRG model, first introduced by \cite{fountoulakis2018law}, is defined over the half-plane $\mathbb{H} = \R \times [0, +\infty)$, and vertices are given by a Poisson point process of intensity 
\begin{equation*}
    d\mu (x, h) = \frac{\tilde{\alpha}}{\pi} \e^{-\tilde{\alpha} h} dx dh,
\end{equation*}
where $\tilde{\alpha}\in(1/2, 1)$. These graphs have power-law degree distribution with exponent $\beta = 2\tilde{\alpha}+1$, so the restriction on $\tilde{\alpha}$ implies that $\beta \in (2, 3)$, which is the regime in which the degree distribution has finite degree and infinite variance. In the HRG setting, $-\tilde{\alpha}^2$ represents the curvature of the hyperbolic plane on which the model is defined. The $x$ component of a vertex can be thought of as its position, and $h$ as its height. In \cite{linker_contact_2021}, $\mathbf{G}_{\infty}$ is defined as the graph where the vertices are given by the above Poisson point process (under the Palm measure with an atom at 0) with i.i.d. heights with density $\tilde{\alpha} \e^{-\tilde{\alpha} h}dh$. An edge between two vertices $v = (x, h)$ and $v' = (x', h')$ is drawn if and only if $\vert x - x' \vert \leq \exp \{(h+h')/2\}.$

The proof of \cite[Theorem 1.1]{linker_contact_2021} is divided into four lemmas, with two lemmas proving the upper and lower bounds in each regime of the parameter $\tilde{\alpha}$. As described in section \ref{sec:intro}, building on the seminal work of the contact process on stars by \cite{berger2005spread} and subsequent results in \cite{mountford2013metastable}, the proof for the lower bounds study the probability that the origin will infect a sufficiently connected node, specifically one with degree larger than $\lambda^{-2}$. Such a node, characterised by its large height, enables the epidemic to persist. Conversely, to establish the upper bounds for each regime, the focus shifts to the event that the epidemic stays within nodes with low degree, i.e. nodes with smaller height. To relate the height of a vertex to its degree, define \cite[Equation (5.1)]{linker_contact_2021} for $h > 0$ and $\tilde{d} > 0$, the functions
\begin{equation*}
    D(h) = \frac{1}{\tilde{\alpha} - \frac{1}{2}} \cdot \e^{h/2}, \hspace{1cm} H(\tilde{d}) = 2 \log \left( \left( \tilde{\alpha} - \frac{1}{2} \right) \cdot \tilde{d} \right)
\end{equation*}
where $D(h)$ corresponds to the expected degree of a vertex at height $h$ and $H(\tilde{d})=D^{-1}(\tilde{d})$ is the height compatible with degree $\tilde{d}$. 

To establish the lower bounds in the first regime, where $\tilde{\alpha} \in (1/2, 3/4]$, the authors show with high probability the existence of a contamination path consisting of nodes with increasing heights, where the first node contaminated by the origin is a vertex whose height exceeds $h_* = H(C \lambda^{- 1/(2 - 2\tilde{\alpha})})$, for $C$ a large constant. This result is achieved by recursively looking at the number of neighbours for each vertex along the path. The tight bound is derived by calculating the probability that the origin successfully contaminates the first ``powerful'' vertex. In the second regime, where $\tilde{\alpha} \in (3/4, 1]$, a much more sparse network structure arises, and contamination occurs indirectly. Here, the origin contaminates a node with height greater than $H( C \log(1/\lambda) \lambda^{-2})$, which then propagates the contamination to a star-like subgraph. A key part of the proof involves establishing the existence of this subgraph, using a subdivision of the space into subspaces, each corresponding to a star.

For the upper bounds, the focus shifts to nodes with low degrees. In the first regime, it is shown that the probability of the contact process remaining within vertices with height below $H(C \lambda^{- 1/(2 - 2\tilde{\alpha})})$, for $C$ a constant, is sufficiently small. This is accomplished by counting all possible contamination paths of this type, using the multivariate Mecke's formula. The proof becomes more intricate in the second regime, as no clear height threshold exists to establish the expected bound for the non-extinction probability directly.

In the context of SFP, the vertex weights naturally correspond to the heights in the hyperbolic graph model, providing a small yet significant step in adapting the proof. However, the main differences between the SFP model and HRG can be summarised in two key points. First, the SFP model is a \emph{soft} model, meaning that edges are formed based on a certain probability, whereas the hyperbolic graph model is a \emph{hard} model, in which the existence of edges is a deterministic function of the distance between nodes. This introduces a higher degree of stochasticity in SFP. The second obvious difference is that SFP can be defined in all dimensions $d \ge 1$, while HRG is the equivalent of a one-dimensional SFP graph, see \cite[Section 9]{komjathy2020explosion} for a detailed description of the bijection in that case. We proceed by showing how these differences are addressed in the proof.

Let the following functions be the analogues of \cite[Equation (5.1)]{linker_contact_2021}. For, $r>1$ and $w > 1$ let
\begin{equation}\label{eq:definition_fonctions_R_W}
    R(w) = w^{d/\alpha} \hspace{1cm}\tilde{W}(r) = r^{\alpha/d}.
\end{equation}  
In this setting, the function $R(w)$ does not correspond to the expected degree of a vertex with weight $w$ exactly, but we will see that a vertex of weight $w$ has an expected degree of order equal to $R(w)$. This remark is the result of \cite[Proposition 3.3]{dalmau_scale-free_2019}, but we give a simpler version to show some integration techniques needed in the rest of the proof. 

For $\x = (x, w_x)$, a vertex in SFP, notice that
\begin{align*}
    \E_{\x}\left[D_{\x}\right] = \E \left[D_{\x} \vert x \in \mathcal{X}, W_x = w_x \right] & = \int_{1}^{+\infty} \int_{\R^d}  (\tau - 1) w^{-\tau} p_{\x, (y, w)} \   dy dw \\
    & = \int_{1}^{+\infty} \int_{\R^d}  (\tau - 1) w^{-\tau} \left( 1 - \exp \left\{-\rho \frac{w_x w}{\lVert x - y \rVert^\alpha} \right\} \right)\   dy dw.
\end{align*}
Contrary to the case of the HRG, to compute the mean degree of a vertex, we need to integrate over all possible positions of the possible neighbours of $\x$ since the model SFP is soft. However, this can be circumvented by noticing that the nodes that are sufficiently close to $\x$, i.e., within a small ball centered in $\x$, are w.h.p. connected to $\x$. Additionally, for nodes outside of this ball, the connection probability with $\x$ can be easily controlled. To see this, for $w \in (1, +\infty)$, let $B_{w}$ be the ball centered at $x$ with radius $(w_x w)^{1/\alpha}$, and let $\overline{B_{w}}$ denote its complement. First, we have that 
\begin{align}\label{eq:lower_bound_degree}
    \E_{\x} \left[D_{\x} \right] & \geq  \int_{1}^{+\infty} \int_{B_w} (\tau - 1) w^{-\tau} \left( 1 - \exp \left(- \rho \right) \right)  dy dw \\ 
    & \geq c w_x^{d/\alpha} \int_{1}^{+\infty} w^{d/\alpha - \tau} \ dw  \geq c w_x^{\frac{d}{\alpha}} = cR(w_x),
\end{align}
for a constant $c > 0$ where the constant changes at each step but never depends on $w$. The last inequality is given since $\gamma = \alpha(\tau -1)/d > 1$, meaning that the exponent of $w$ in the last integral is less than $-1$, and the integral converges. On the other hand, notice that we also have
\begin{equation*}
    \E_{\x} \left[D_{\x} \right] = \int_{1}^{+\infty} (\tau - 1) w_{y}^{-\tau} \left( \int_{B_w}   p_{\x, (y, w)} \   dy  +  \int_{\overline{B_w}} p_{\x, (y, w)} \   dy  \right) dw. 
\end{equation*}

Since $p_{\x, (y, w)} < 1$ and $1 - \exp(-z) \leq z$, the sum of integrals in the last equation can be bounded above by 
\begin{align*}
    \int_{B_w} \, dy +  \int_{\overline{B_w}} \frac{w_\x w}{\lVert x - y \rVert^\alpha} \, dy = (w_\x w)^{\frac{d}{\alpha}} + (w_\x w) C(n) \int_{(w_\x w)^{\frac{1}{\alpha}}}^{+\infty} r^{d-1-\alpha} \, dr,
\end{align*}
where $C(n)$ is the surface measure of the unit sphere $\mathbb{S}^{d-1}$. Since $\alpha > d$, the integral with respect to $r$ exists, and by computing it, we find that 
\begin{align}\label{eq:upper_bound_degree}
    \E_{\x} \left[D_{\x} \right] & \leq  \int_{1}^{+\infty} (\tau - 1) w^{-\tau} \left( (w_\x w)^{d/\alpha} + (w_\x w) C(n) (w_\x w)^{\frac{d - \alpha}{\alpha}}\right) dw \\
    & \leq  C w_x^{d/\alpha} \int_{1}^{+\infty} w^{d/\alpha - \tau} \ dw \leq C w_x^{d/\alpha} = C R(w_x),
\end{align} 
for some positive constant $C$, where we use the last integral's convergence again. With \ref{eq:lower_bound_degree} and \ref{eq:upper_bound_degree}, we conclude that there exists positive constants $c$ and $C$ such that 
    \begin{align}\label{inequality_sup}
        c R(w_x) \leq \E_{\x} \left[D_{\x} \right] \leq C R(w_x).
    \end{align}
With this in hand, the proof of Theorem \ref{thm:non-extinction} follows along the lines of \cite[Theorem 1.1]{linker_contact_2021}. At each step, we use the function $W$ to convert degree thresholds, indicating when the epidemic spreads, into equivalent weight thresholds. To account for the differences between the height distribution in the hyperbolic graph model and the weight distribution in SFP, we map a height $h$ to a weight given by $\exp (\frac{h}{2} \cdot \frac{\alpha}{d})$.

When computing the number of neighbours of a vertex or counting contamination paths, integrals as presented before naturally appear. To bypass the softness of SFP, we use equations \eqref{eq:lower_bound_degree} and \eqref{eq:upper_bound_degree}. Integration techniques from equation \eqref{eq:lower_bound_degree} are applied for the lower bounds, while those from equation \eqref{eq:upper_bound_degree} are used for the upper bounds.

The main distinction between our proof and the one in \cite[Theorem 1.1]{linker_contact_2021} lies in handling \cite[Lemma 4.2]{linker_contact_2021}, which proves the lower bound for the second regime by subdividing the space to construct the star-like subgraph. Specifically, this involves constructing the sets $S_k$, defined as a product of intervals: the first interval corresponds to bounds on the vertex position, and the second to bounds on the weights. However, intervals for the position are unsuitable in the SFP setting. To resolve this, we can use the construction from the proof of \cite[Proposition 2.2]{gracar_contact_2022}, using annuli $A_k$, as illustrated in \cite[Figure 2]{gracar_contact_2022}, instead of intervals.

\bibliographystyle{apalike}
\bibliography{biblio}

\begin{thebibliography}{}

\bibitem[Aiello et~al., 2008]{aiello2008spatial}
Aiello, W., Bonato, A., Cooper, C., Janssen, J., and Pra{\l}at, P. (2008).
\newblock A spatial web graph model with local influence regions.
\newblock {\em Internet Mathematics}, 5(1-2):175--196.

\bibitem[Bansaye and Salvi, 2024]{bansaye2024branching}
Bansaye, V. and Salvi, M. (2024).
\newblock Branching processes and homogenization for epidemics on spatial
  random graphs.
\newblock {\em Electronic Journal of Probability}, 29:1--37.

\bibitem[Barthelemy, 2022]{barthelemy2022spatial}
Barthelemy, M. (2022).
\newblock {\em Spatial networks: a complete introduction: from graph theory and
  statistical physics to real-world applications}.
\newblock Springer Nature.

\bibitem[Berger et~al., 2005]{berger2005spread}
Berger, N., Borgs, C., Chayes, J., and Saberi, A. (2005).
\newblock On the spread of viruses on the internet.
\newblock In {\em Proceedings of the 16th ACM-SIAM Symposium on Discrete
  Algorithm (SODA)}, pages 301--310.

\bibitem[Bl{\"a}sius et~al., 2022]{blasius2022efficiently}
Bl{\"a}sius, T., Friedrich, T., Katzmann, M., Meyer, U., Penschuck, M., and
  Weyand, C. (2022).
\newblock Efficiently generating geometric inhomogeneous and hyperbolic random
  graphs.
\newblock {\em Network Science}, 10(4):361--380.

\bibitem[Bringmann et~al., 2019]{bringmann2019geometric}
Bringmann, K., Keusch, R., and Lengler, J. (2019).
\newblock Geometric inhomogeneous random graphs.
\newblock {\em Theoretical Computer Science}, 760:35--54.

\bibitem[Chatterjee and Durrett, 2009]{chatterjee_contact_2009}
Chatterjee, S. and Durrett, R. (2009).
\newblock Contact processes on random graphs with power law degree
  distributions have critical value 0.
\newblock {\em The Annals of Probability}, 37(6).

\bibitem[Cipriani et~al., 2025]{cipriani2025spectrum}
Cipriani, A., Hazra, R.~S., Malhotra, N., and Salvi, M. (2025).
\newblock The spectrum of dense kernel-based random graphs.
\newblock {\em (arXiv preprint) arXiv:2502.09415}.

\bibitem[Cipriani and Salvi, 2024]{cipriani2024scale}
Cipriani, A. and Salvi, M. (2024).
\newblock Scale-free percolation mixing time.
\newblock {\em Stochastic Processes and their Applications}, 167:104236.

\bibitem[Coppersmith et~al., 2002]{coppersmith2002diameter}
Coppersmith, D., Gamarnik, D., and Sviridenko, M. (2002).
\newblock The diameter of a long-range percolation graph.
\newblock {\em Random Structures \& Algorithms}, 21(1):1--13.

\bibitem[Cranston et~al., 2014]{cranston2014contact}
Cranston, M., Mountford, T., Mourrat, J.-C., and Valesin, D. (2014).
\newblock The contact process on finite homogeneous trees revisited.
\newblock {\em (arXiv preprint) arXiv:1403.5927}.

\bibitem[Dalmau and Salvi, 2021]{dalmau_scale-free_2019}
Dalmau, J. and Salvi, M. (2021).
\newblock Scale-free percolation in continuous space: quenched degree and
  clustering coefficient.
\newblock {\em Journal of Applied Probability}, 58(1):106--127.

\bibitem[Deijfen et~al., 2013]{deijfen_scale-free_2013}
Deijfen, M., van~der Hofstad, R., and Hooghiemstra, G. (2013).
\newblock Scale-free percolation.
\newblock {\em Annales de l'Institut Henri Poincar{\'e}, Probabilit{\'e}s et
  Statistiques}, 49(3).

\bibitem[Deprez et~al., 2015]{deprez2015inhomogeneous}
Deprez, P., Hazra, R.~S., and W{\"u}thrich, M.~V. (2015).
\newblock Inhomogeneous long-range percolation for real-life network modeling.
\newblock {\em Risks}, 3(1):1--23.

\bibitem[Deprez and W{\"u}thrich, 2019]{deprez2019scale}
Deprez, P. and W{\"u}thrich, M.~V. (2019).
\newblock Scale-free percolation in continuum space.
\newblock {\em Communications in Mathematics and Statistics}, 7(3):269--308.

\bibitem[Fountoulakis and M{\"u}ller, 2018]{fountoulakis2018law}
Fountoulakis, N. and M{\"u}ller, T. (2018).
\newblock Law of large numbers for the largest component in a hyperbolic model
  of complex networks.
\newblock {\em The Annals of Applied Probability}, 28(1).

\bibitem[Gracar and Grauer, 2024]{gracar_contact_2022}
Gracar, P. and Grauer, A. (2024).
\newblock The contact process on scale-free geometric random graphs.
\newblock {\em Stochastic Processes and their Applications}, 173:104360.

\bibitem[Gracar et~al., 2022]{gracarRecurrenceTransienceWeightdependent2022}
Gracar, P., Heydenreich, M., M{\"o}nch, C., and M{\"o}rters, P. (2022).
\newblock Recurrence versus transience for weight-dependent random connection
  models.
\newblock {\em Electronic Journal of Probability}, 27:1--31.

\bibitem[Hao and Heydenreich, 2023]{hao2023graph}
Hao, N. and Heydenreich, M. (2023).
\newblock Graph distances in scale-free percolation: the logarithmic case.
\newblock {\em Journal of Applied Probability}, 60(1):295--313.

\bibitem[Hao~Can, 2017]{can2015metastability}
Hao~Can, V. (2017).
\newblock Metastability for the contact process on the preferential attachment
  graph.
\newblock {\em Internet Mathematics}, 1.

\bibitem[Harris, 1974]{harris1974contact}
Harris, T.~E. (1974).
\newblock Contact interactions on a lattice.
\newblock {\em The Annals of Probability}, 2(6):969--988.

\bibitem[Heydenreich et~al., 2017]{HHJ17}
Heydenreich, M., Hulshof, T., and Jorritsma, J. (2017).
\newblock Structures in supercritical scale-free percolation.
\newblock {\em The Annals of Applied Probability}, 27(4):2569--2604.

\bibitem[Heydenreich et~al., 2019]{heydenreichLaceExpansionMeanField2023}
Heydenreich, M., van~der Hofstad, R., Last, G., and Matzke, K. (2019).
\newblock Lace expansion and mean-field behavior for the random connection
  model.
\newblock {\em (arXiv preprint) arXiv:1908.11356}.

\bibitem[Hirsch, 2017]{hirsch2017heavy}
Hirsch, C. (2017).
\newblock From heavy-tailed boolean models to scale-free gilbert graphs.
\newblock {\em Brazilian Journal of Probability and Statistics},
  31(1):111--143.

\bibitem[Jorritsma et~al., 2023]{jorritsma2023cluster}
Jorritsma, J., Komj{\'a}thy, J., and Mitsche, D. (2023).
\newblock Cluster-size decay in supercritical kernel-based spatial random
  graphs.
\newblock {\em (arXiv preprint) arXiv:2303.00724}.

\bibitem[Komj{\'a}thy et~al., 2023]{komjathy2023four}
Komj{\'a}thy, J., Lapinskas, J., Lengler, J., and Schaller, U. (2023).
\newblock Four universal growth regimes in degree-dependent first passage
  percolation on spatial random graphs i.
\newblock {\em (arXiv preprint) arXiv:2309.11840}.

\bibitem[Komj{\'a}thy and Lodewijks, 2020]{komjathy2020explosion}
Komj{\'a}thy, J. and Lodewijks, B. (2020).
\newblock Explosion in weighted hyperbolic random graphs and geometric
  inhomogeneous random graphs.
\newblock {\em Stochastic Processes and their Applications}, 130(3):1309--1367.

\bibitem[Krioukov et~al., 2010]{krioukov2010hyperbolic}
Krioukov, D., Papadopoulos, F., Kitsak, M., Vahdat, A., and Bogun{\'a}, M.
  (2010).
\newblock Hyperbolic geometry of complex networks.
\newblock {\em Physical Review E}, 82(3):036106.

\bibitem[Lakis et~al., 2024]{lakis2024improved}
Lakis, K., Lengler, J., Petrova, K., and Schiller, L. (2024).
\newblock Improved bounds for polylogarithmic graph distances in scale-free
  percolation and related models.
\newblock {\em (arXiv preprint) arXiv:2405.07217}.

\bibitem[Lalley and Su, 2017]{lalley2017contact}
Lalley, S. and Su, W. (2017).
\newblock Contact processes on random regular graphs.
\newblock {\em The Annals of Applied Probability}, 27(4):2061--2097.

\bibitem[Liggett et~al., 1997]{liggett1997domination}
Liggett, T.~M., Schonmann, R.~H., and Stacey, A.~M. (1997).
\newblock Domination by product measures.
\newblock {\em The Annals of Probability}, 25(1):71--95.

\bibitem[Linker et~al., 2021]{linker_contact_2021}
Linker, A., Mitsche, D., Schapira, B., and Valesin, D. (2021).
\newblock The contact process on random hyperbolic graphs: Metastability and
  critical exponents.
\newblock {\em The Annals of Probability}, 49(3):1480--1514.

\bibitem[Mountford et~al., 2016]{mountford_exponential_2016}
Mountford, T., Mourrat, J.-C., Valesin, D., and Yao, Q. (2016).
\newblock Exponential extinction time of the contact process on finite graphs.
\newblock {\em Stochastic Processes and their Applications}, 126(7):1974--2013.

\bibitem[Mountford et~al., 2013]{mountford2013metastable}
Mountford, T., Valesin, D., and Yao, Q. (2013).
\newblock Metastable densities for the contact process on power law random
  graphs.
\newblock {\em Electronic Journal of Probability}, 18.

\bibitem[Mourrat and Valesin, 2016]{mourrat_phase_2016}
Mourrat, J.-C. and Valesin, D. (2016).
\newblock Phase transition of the contact process on random regular graphs.
\newblock {\em Electronic Journal of Probability}, 21.

\bibitem[Pastor-Satorras and Vespignani, 2001a]{pastor2001aepidemic}
Pastor-Satorras, R. and Vespignani, A. (2001a).
\newblock Epidemic dynamics and endemic states in complex networks.
\newblock {\em Physical Review E}, 63(6):066117.

\bibitem[Pastor-Satorras and Vespignani, 2001b]{pastor2001bepidemic}
Pastor-Satorras, R. and Vespignani, A. (2001b).
\newblock Epidemic spreading in scale-free networks.
\newblock {\em Physical review letters}, 86(14):3200.

\bibitem[Pastor-Satorras and Vespignani, 2002]{pastor2002epidemic}
Pastor-Satorras, R. and Vespignani, A. (2002).
\newblock Epidemic dynamics in finite size scale-free networks.
\newblock {\em Physical Review E}, 65(3):035108.

\bibitem[Pemantle, 1992]{pemantle_contact_1992}
Pemantle, R. (1992).
\newblock The contact process on trees.
\newblock {\em The Annals of Probability}, 20(4):2089--2116.

\bibitem[Schapira and Valesin, 2017]{schapira2017extinction}
Schapira, B. and Valesin, D. (2017).
\newblock Extinction time for the contact process on general graphs.
\newblock {\em Probability Theory and Related Fields}, 169:871--899.

\bibitem[Stacey, 2001]{stacey2001contact}
Stacey, A. (2001).
\newblock The contact process on finite homogeneous trees.
\newblock {\em Probability theory and related fields}, 121(4):551--576.

\bibitem[Stacey, 1996]{stacey1996existence}
Stacey, A.~M. (1996).
\newblock The existence of an intermediate phase for the contact process on
  trees.
\newblock {\em The Annals of Probability}, 24(4):1711--1726.

\end{thebibliography}

\end{document}